\title{The embedded contact homology index revisited}
 \author{Michael Hutchings\footnote{Partially supported by NSF grant
     DMS-0505884}}
\date{}
\numberwithin{equation}{section}
\newcommand{\mc}[1]{{\mathcal #1}}
\newtheorem{theorem}{Theorem}[section]
\newtheorem{proposition}[theorem]{Proposition}
\newtheorem{corollary}[theorem]{Corollary}
\newtheorem{lemma}[theorem]{Lemma}
\newtheorem{lemma-definition}[theorem]{Lemma-Definition}
\theoremstyle{definition}
\newtheorem{definition}[theorem]{Definition}
\newtheorem{remark}[theorem]{Remark}
\newtheorem{example}[theorem]{Example}
\newtheorem{notation}[theorem]{Notation}
\newtheorem*{acknowledgments}{Acknowledgments}
\newcommand{\floor}[1]{\left\lfloor #1 \right\rfloor}
\newcommand{\ceil}[1]{\left\lceil #1 \right\rceil}
\newcommand{\pin}{P^{\op{in}}}
\newcommand{\pout}{P^{\op{out}}}
\newcommand{\lin}{\Lambda^{\op{in}}}
\newcommand{\lout}{\Lambda^{\op{out}}}
\newcommand{\eqdef}{\;{:=}\;}
\renewcommand{\frak}{\mathfrak}
\newcommand{\C}{{\mathbb C}}
\newcommand{\R}{{\mathbb R}}
\newcommand{\Z}{{\mathbb Z}}
\newcommand{\op}{\operatorname}
\newcommand{\Spinc}{\op{Spin}^c}
\newcommand{\Ker}{\op{Ker}}
\newcommand{\bpm}{\begin{pmatrix}}
\newcommand{\epm}{\end{pmatrix}}
\begin{document}

\setcounter{tocdepth}{2}

\maketitle

\begin{abstract}
    Let $Y$ be a closed oriented 3-manifold with a contact form such
  that all Reeb orbits are nondegenerate.  The embedded contact
  homology (ECH) index associates an integer to each relative
  $2$-dimensional homology class of surfaces whose boundary is the
  difference between two unions of Reeb orbits.  This integer
  determines the relative grading on ECH; the ECH differential counts
  holomorphic curves in the symplectization of $Y$ whose relative
  homology classes have ECH index $1$.  A known index inequality
  implies that such curves are (mostly) embedded and satisfy some
  additional constraints.

  In this paper we prove four new results about the ECH index.  First,
  we refine the relative grading on ECH to an absolute grading, which
  associates to each union of Reeb orbits a homotopy class of oriented
  2-plane fields on $Y$.  Second, we extend the ECH index inequality
  to symplectic cobordisms between three-manifolds with stable
  Hamiltonian structures, and simplify the proof.  Third, we establish
  general inequalities on the ECH index of unions and multiple covers
  of holomorphic curves in cobordisms.  Finally, we define a new
  relative filtration on ECH, or any other kind of contact homology of
  a contact 3-manifold, which is similar to the ECH index and related
  to the Euler characteristic of holomorphic curves.  This does not
  give new topological invariants except possibly in special
  situations, but it is a useful computational tool.
\end{abstract}

\tableofcontents

\section{Introduction}
\label{sec:intro}

We begin with a very brief overview of embedded contact homology, and
then describe the results of this paper.  More detailed definitions will
be given later.

\subsection{Embedded contact homology}

Let $Y$ be a closed oriented 3-manifold with a contact form $\lambda$
such that all Reeb orbits are nondegenerate. Let
$\xi\eqdef\Ker(\lambda)$ denote the associated contact structure, and
let $\Gamma\in H_1(Y)$.  The embedded contact homology
$ECH_*(Y,\xi,\Gamma)$ is the homology of a chain complex which is
generated by finite sets of pairs $\alpha=\{(\alpha_i,m_i)\}$, where
the $\alpha_i$'s are distinct embedded Reeb orbits, the $m_i$'s are
positive integers, $\sum_im_i[\alpha_i]=\Gamma \in H_1(Y)$, and
$m_i=1$ whenever $\alpha_i$ is hyperbolic.  The differential
$\partial$ on the chain complex counts certain (mostly) embedded
holomorphic curves in $\R\times Y$, with respect to a suitable
$\R$-invariant almost complex structure $J$.

More precisely, the differential counts holomorphic curves $C$ whose
ECH index equals one.  The ECH index $I(C)$, originally defined in
\cite{pfh2} and reviewed here in \S\ref{sec:I}, is a certain
topological quantity which depends only on the relative homology class
of $C$.  The relation between the condition $I(C)=1$ and embeddedness
is as follows.  It is shown\footnote{The index inequality
  \eqref{eqn:II} was proved in a different and easier context in
  \cite{pfh2}.  To carry over the argument to the present setting, one
  needs to use the asymptotic analysis of Siefring \cite{siefring1},
  see \S\ref{sec:IICob}.}  in \cite{pfh2} that if $C$ is not multiply
covered, then
\begin{equation}
\label{eqn:II}
\op{ind}(C) \le I(C) - 2\delta(C),
\end{equation}
where $\delta(C)$ is a nonnegative integer which equals zero if and
only if $C$ is embedded.  Here $\op{ind}(C)$ denotes the Fredholm
index of $C$, which is the dimension of the moduli space of
holomorphic curves near $C$ if $J$ is generic.  It is further shown in
\cite{pfh2} that if $T$ is a union of $\R$-invariant cylinders, and if
the image of $C$ contains no $\R$-invariant cylinder, then
\begin{equation}
\label{eqn:trivCyl}
I(C\cup T) \ge I(C) + 2\#(C\cap T),
\end{equation}
where `$\#$' denotes the algebraic intersection number, which is
nonnegative by intersection positivity.  The inequalities
\eqref{eqn:II} and \eqref{eqn:trivCyl} imply, as explained in
\cite[Cor. 11.5]{t3}, that if $J$ is generic, then any holomorphic
curve $C$ with $I(C)=1$ consists of an embedded component of Fredholm
index one, possibly together with some covers of $\R$-invariant
cylinders which do not intersect the rest of $C$.  These are the
curves that the ECH differential counts. In particular, $I$ defines
the relative grading on the chain complex.  See \cite{t3} for more
about ECH, and \cite[\S7]{obg1} for a proof that $\partial^2=0$.

A priori, ECH might depend not only on $Y$, $\xi$, and $\Gamma$, but
also on the choice of contact form $\lambda$ and almost complex
structure $J$.  However, Taubes \cite{echswf} has recently shown that,
as conjectured in \cite{t3}, ECH is not only independent of $\lambda$
and $J$, but also isomorphic to a version of Seiberg-Witten Floer
homology as defined by Kronheimer-Mrowka \cite{km}.  The precise
statement is that\footnote{Taubes replaces the r.h.s.\ of
  \eqref{eqn:echswf} with the isomorphic group
  $\widehat{HM}^{-*}(Y,\frak{s}(\xi) + \op{PD}(\Gamma))$.  This is
  also isomorphic to the completed version
  $\check{HM}_\bullet(-Y,\frak{s}(\xi)+\op{PD}(\Gamma))$, and
  conjecturally isomorphic to the Heegaard Floer homology
  $HF^+_*(-Y,\frak{s}(\xi)+\op{PD}(\Gamma))$.}
\begin{equation}
\label{eqn:echswf}
ECH_*(Y,\xi,\Gamma) \simeq \check{HM}_*(-Y,\frak{s}(\xi)+\op{PD}(\Gamma)),
\end{equation}
up to a grading shift, where $\frak{s}(\xi)$ is a spin-c structure
determined by $\xi$, see \S\ref{sec:TP}.  Thus ECH is more or less a
topological invariant of the three-manifold $Y$, and in this regard it
differs substantially from the symplectic field theory of
Eliashberg-Givental-Hofer \cite{e,egh}, which is highly sensitive to the
contact structure and vanishes for overtwisted ones \cite{bn,yau}. The
isomorphism \eqref{eqn:echswf} can be regarded as an extension of
Taubes's ``Seiberg-Witten=Gromov'' theorem for closed symplectic
4-manifolds \cite{swgr} to the noncompact symplectic 4-manifold
$\R\times Y$.  This hoped-for correspondence was the original
motivation for the definition of ECH, see \cite{pfh2}.

\subsection{New results on the ECH index}

Despite this motivation, the definition of ECH, and especially the ECH
index, may at first seem a bit strange.  The aim of this paper is to shed
some additional light on the ECH index by proving four new results about it.

\subsubsection{Absolute grading}

First, in \S\ref{sec:AG} we show that the relative grading on ECH can
be refined to an absolute grading, which associates to each generator
a homotopy class of oriented 2-plane fields on $Y$, see
Theorem~\ref{thm:AG}.  If $\alpha=\{(\alpha_i,m_i)\}$ is an ECH
generator, then the associated 2-plane field $I(\alpha)$ is obtained
by modifying the contact plane field $\xi$ in a canonical manner (up
to homotopy, depending only on $m_i$) in disjoint tubular
neighborhoods of the Reeb orbits $\alpha_i$.

Recall from \cite{km} that Seiberg-Witten Floer homology also has an
absolute grading by homotopy classes of oriented 2-plane fields.  We
conjecture that Taubes's isomorphism \eqref{eqn:echswf} between ECH
and Seiberg-Witten Floer homology respects these absolute
gradings.

We also expect that one can define a similar absolute grading on
Heegaard Floer homology, by refining the construction in
\cite[\S2.6]{os} that associates to each Heegaard-Floer generator a
spin-c structure.

\subsubsection{Index inequality in cobordisms}

Second, in \S\ref{sec:IICob} we generalize the index inequality
\eqref{eqn:II} to holomorphic curves in four-dimensional symplectic
cobordisms, see Theorem~\ref{thm:indI}. Our proof follows the original
proof of \eqref{eqn:II}, but with a new and simpler proof of the key
combinatorial lemma.

\subsubsection{Unions and multiple covers}

Third, in \S\ref{sec:IU} we prove a new inequality on the ECH index of
unions and multiple covers of holomorphic curves in cobordisms, see
Theorem~\ref{thm:IU}.  This inequality is a substantial generalization
of \eqref{eqn:trivCyl} and asserts that if $C$ and $C'$ are two
holomorphic curves, then
\begin{equation}
\label{eqn:ICC'}
I(C\cup C') \ge I(C) + I(C') + 2 C\cdot C',
\end{equation}
where $C\cdot C'$ is an ``intersection number'' of $C$ and $C'$
defined in \S\ref{sec:IN}.  If the images of $C$ and $C'$ do not have
any irreducible components in common, then $C\cdot C'$ is simply the
algebraic count of intersections of $C$ and $C'$, which is nonnegative
by intersection positivity.  If the images of $C$ and $C'$ have a
common component, then the definition of $C\cdot C'$ is more subtle.
In particular, $C \cdot C$ can be negative.

Ultimately, when $X$ is a symplectic cobordism from $Y_+$ to $Y_-$,
one would like to define a map from the ECH of $Y_+$ to that of $Y_-$
by counting holomorphic curves $C$ in $X$ with ECH index $I(C)=0$.  A
major difficulty is that even if $J$ is generic, an arbitrary $I=0$
curve may contain some negative ECH index multiple covers, together
with some other components with positive Fredholm index.  The
inequality \eqref{eqn:ICC'} clarifies the extent to which this can
happen.  Note that this problem does not arise in defining the ECH
differential.  Indeed, if $X$ is a symplectization $\R\times Y$ with
an $\R$-invariant almost complex structure, then with some trivial
exceptions $C\cdot C$ is always nonnegative, see
Proposition~\ref{prop:CdotC}.

\subsubsection{Euler characteristic and relative filtration}

While the ECH differential counts holomorphic curves $C$ with
$I(C)=1$, the latter condition does not specify the genus or Euler
characteristic of $C$.  To complete the picture here, the last part of
this paper introduces another relative index, which we denote by
$J_0$.  This is a natural cousin of the ECH index $I$, and has similar
basic properties.  An analogue of the inequality \eqref{eqn:II} holds
for $J_0$, in which $J_0$ bounds the negative Euler characteristic
instead of the Fredholm index, see Corollary~\ref{cor:EulerBound} and
the stronger Proposition~\ref{prop:JBound}.  A version of the
inequality \eqref{eqn:ICC'} also holds for $J_0$, see
Proposition~\ref{prop:JU}.  The resulting bound on the topological
complexity of holomorphic curves in terms of $J_0$ plays a key role in
a subsequent paper \cite{wh}, which obtains various extensions of
the Weinstein conjecture.

The above inequalities also lead to the last main
result of the present paper, Theorem~\ref{thm:J+}, asserting that if
$X$ is the symplectization of a contact manifold $Y$ with an
$\R$-invariant almost complex structure, then every holomorphic curve
$C$ in $X$ satisfies $J_+(C)\ge 0$.  Here $J_+$ is another relative
index which is a slight variant of $J_0$.  It follows that $J_+$
defines a relative filtration on embedded contact homology, or for
that matter on any kind of contact homology of a contact 3-manifold.
As explained in \S\ref{sec:JDiscussion}, this filtration is a useful
computational tool, although it does not give new topological
invariants except possibly in special situations.

\subsubsection{Stable Hamiltonian structures}

Embedded contact homology is very similar to the periodic Floer
homology (PFH) of mapping tori considered in \cite{pfh2,pfh3}.  In
fact, there is a more general geometric structure from \cite{behwz},
called a ``stable Hamiltonian structure'', which includes both contact
manifolds and mapping tori as special cases, and for which one still
has Gromov-type compactness for holomorphic curves.  The definition of
ECH or PFH then extends in a straightforward way to any 3-manifold
with a stable Hamiltonian structure in which all Reeb orbits are
nondegenerate\footnote{When the stable Hamiltonian structure is not
  contact, one needs to either assume a ``monotonicity'' condition as
  in \cite[\S2]{pfh3}, or work with coefficients in a suitable Novikov
  ring.}.  For this reason, we will use stable Hamiltonian structures
as the basic geometric setup throughout this paper.

%

\section{The ECH index}
\label{sec:I}

We now review the definition of the ECH index, and the various notions
that enter into it, in the context of stable Hamiltonian structures.

\subsection{Stable Hamiltonian structures}

Let $Y$ be an oriented $3$-manifold.  For simplicity we assume that
$Y$ is closed, although for most of this paper this is not actually
necessary.

\begin{definition}
  \cite{behwz,cm,siefring1} A {\bf stable Hamiltonian structure\/} on
  $Y$ is a pair $(\lambda,\omega)$, where $\lambda$ is a $1$-form on
  $Y$, and $\omega$ is a $2$-form on $Y$, such that:
\begin{gather*}
\lambda\wedge\omega  > 0,\\
d\omega = 0,\\
\Ker(\omega) \subset \Ker(d\lambda).
\end{gather*}
\end{definition}

A stable Hamiltonian structure determines an oriented $2$-plane field
\[
\xi \eqdef \Ker(\lambda).
\]
It also determines a vector field $R$ defined by
\[
\omega(R,\cdot) = 0, \quad \quad \lambda(R)=1.
\]
We will call $R$ the {\bf Reeb vector field\/}, and the flow
determined by $R$ the {\bf Reeb flow\/}.  The definition of
stable Hamiltonian structure implies that $R$ is transverse to $\xi$, the
restriction of $\omega$ to $\xi$ is nondegenerate, and the Reeb flow
preserves the stable Hamiltonian structure, i.e.\ $\mc{L}_R\lambda = 0$ and
$\mc{L}_R\omega = 0$.

\begin{example}
If $\lambda$ is a contact $1$-form on $Y$, i.e. $\lambda\wedge
d\lambda >0$, then $(\lambda,d\lambda)$ is a stable Hamiltonian structure,
in which $\xi$ is the contact $2$-plane field, and $R$ is the Reeb
vector field in the usual sense.
\end{example}

\begin{example}
  Let $\Sigma$ be a surface with a symplectic form $\omega$,
  and let $\phi:(\Sigma,\omega)\to(\Sigma,\omega)$ be a
  symplectomorphism.  Let $Y$ be the {\bf mapping torus\/}
\[
Y \eqdef \frac{[0,1]\times\Sigma}{(1,x)\sim(0,\phi(x))}.
\]
Projection onto the $[0,1]$ factor defines a fiber bundle $\pi:Y\to
S^1$.  Let $t$ denote the $[0,1]$ coordinate.  The vector field
$\partial_t$ on $[0,1]\times\Sigma$ descends to a vector field on $Y$,
which we also denote by $\partial_t$.  The $2$-forms $\omega$ on the
fibers of $Y$ extend to a closed $2$-form $\omega_Y$ on $Y$ which
annihilates $\partial_t$.  Then $(\pi^*dt,\omega_Y)$ is a stable Hamiltonian
structure on $Y$, in which $\xi$ is the vertical tangent bundle of
$\pi$, and $R=\partial_t$.
\end{example}

\subsection{Reeb orbits}

Fix a closed oriented 3-manifold $Y$ with a stable Hamiltonian structure
$(\lambda,\omega)$.  A {\bf Reeb orbit\/} is a closed orbit of the
Reeb flow, i.e.\ a smooth map $\gamma:\R/T\to Y$ for some $T>0$ such
that $\gamma'(t)=R(\gamma(t))$.  Two Reeb orbits are considered the
same if they differ only by precomposition with a rotation of $\R/T$.
Given a Reeb orbit $\gamma:\R/T\to Y$ and a positive integer $k$, the
$k$-fold {\bf iterate\/} of $\gamma$ is the pullback of $\gamma$ to
$\R/kT$, which we denote by $\gamma^k$.

Given a Reeb orbit $\gamma$, for any $y$ in the
image of $\gamma$, the linearization of the Reeb flow along $\gamma$
defines a symplectic linear map
\[
P_{\gamma,y}: (\xi_y,\omega) \longrightarrow (\xi_y,\omega)
\]
called the {\bf linearized return map\/}.  The eigenvalues of
$P_{\gamma,y}$ do not depend on $y$.  The Reeb orbit $\gamma$ is said
to be {\bf nondegenerate\/} if $P_{\gamma,y}$ does not have $1$ as an
eigenvalue.  {\em In this paper we always assume that all Reeb orbits
  are nondegenerate\/}\footnote{As K. Cieliebak pointed out to me, it
  is currently unknown whether an arbitrary stable Hamiltonian
  structure can be slightly perturbed so as to make all Reeb orbits
  nondegenerate.  (However this is not a problem in the contact case
  or the mapping torus case.)}.  For any Reeb orbit $\gamma$, the
linearized return map $P_{\gamma,y}$, being symplectic, has
eigenvalues $\lambda,\lambda^{-1}$ which are either real and positive,
in which case $\gamma$ is called {\bf positive hyperbolic\/}, or real
and negative, in which case $\gamma$ is called {\bf negative
  hyperbolic\/}, or on the unit circle, in which case $\gamma$ is
called {\bf elliptic\/}.

\subsection{The Conley-Zehnder index}

If $\gamma:\R/T\to Y$ is a Reeb orbit, let $\mc{T}(\gamma)$ denote the
set of homotopy classes of symplectic trivializations of the $2$-plane
bundle $\gamma^*\xi$ over $S^1=\R/T$.  This is an affine space over
$\Z$.  Our sign convention\footnote{The paper \cite{pfh2} incorrectly
  claims to be using this convention.  It in fact uses the opposite
  convention throughout.} is that if $\tau_1,\tau_2:\gamma^*\xi\to
S^1\times\R^2$ are two trivializations, then
\begin{equation}
\label{eqn:FSC}
\tau_1-\tau_2=\op{deg}(\tau_2\circ\tau_1^{-1}:S^1\longrightarrow
\op{Sp}(2,\R)\approx S^1).
\end{equation}

Now let $\gamma:\R/T\to Y$ be a Reeb orbit and let $\tau$ be a
trivialization of $\gamma^*\xi$.  Given $t\in\R$, the linearized Reeb
flow along $\gamma$ from time $0$ to time $t$ defines a symplectic map
$\xi_{\gamma(0)}\to\xi_{\gamma(t)}$, which with respect to the
trivialization $\tau$ is a symplectic matrix $\psi(t)$.  In
particular, $\psi(0)$ is the identity and $\psi(T)$ is the linearized
return map.  Since $\gamma$ is assumed nondegenerate, the path of
symplectic matrices $\{\psi(t)\mid 0\le t\le T\}$ has a well-defined
{\bf Conley-Zehnder index\/}, which we denote by
\[
\op{CZ}_\tau(\gamma)\in\Z.
\]
In our three-dimensional situation, this can be described explicitly
as follows.

\begin{itemize}
\item If $\gamma$ is hyperbolic, then there is an integer $n$ such
  that the linearized Reeb flow along $\gamma$ rotates the eigenspaces
  of the linearized return map by angle $n\pi$ with respect to
  $\tau$.  In this case
\begin{equation}
\label{eqn:CZHyp}
\op{CZ}_\tau(\gamma^k) = kn.
\end{equation}
The integer $n$ is even when $\gamma$ is positive hyperbolic and odd
when $\gamma$ is negative hyperbolic.
\item
If $\gamma$ is elliptic, then $\tau$ is homotopic to a trivialization
in which the linearized Reeb flow along $\gamma$ rotates by angle
$2\pi\theta$.  Here the number $\theta$, called the {\bf monodromy
  angle\/}, is necessarily irrational because $\gamma$ and all of its
iterates are assumed nondegenerate.  In this case
\begin{equation}
\label{eqn:CZEll}
\op{CZ}_\tau(\gamma^k) = 2\floor{k\theta}+1.
\end{equation}
\end{itemize}
The Conley-Zehnder index depends only on the Reeb orbit $\gamma$ and
the homotopy class of $\tau$ in $\mc{T}(\gamma)$.  If
$\tau'\in\mc{T}(\gamma)$ is another trivialization, then we have
\begin{equation}
\label{eqn:CZTriv}
\op{CZ}_\tau(\gamma^k) - \op{CZ}_{\tau'}(\gamma^k) = 2k(\tau' - \tau).
\end{equation}

\subsection{Orbit sets}

\begin{definition}
An {\bf orbit set\/} is a finite set of pairs
$\alpha=\{(\alpha_i,m_i)\}$, where:
\begin{itemize}
\item
The $\alpha_i$'s are distinct, embedded Reeb orbits.
\item
The $m_i$'s are positive integers\footnote{Recall that in order to be a
  generator of the ECH chain complex, $\alpha$ must satisfy the
  additional requirement that $m_i=1$ whenever $\alpha_i$ is
  hyperbolic.  However we will not impose that condition anywhere in
  this paper.}.
\end{itemize}
Define the homology
class of $\alpha$ by
\[
[\alpha] \eqdef \sum_i m_i[\alpha_i]\in H_1(Y).
\]
\end{definition}

\begin{definition}
\label{def:RHC}
  If $\alpha=\{(\alpha_i,m_i)\}$ and $\beta=\{(\beta_j,n_j)\}$ are
  orbit sets with $[\alpha]=[\beta]\in H_1(Y)$, let
  $H_2(Y,\alpha,\beta)$ denote the set of relative homology classes of
  $2$-chains $Z$ in $Y$ such that
\[
\partial Z = \sum_i m_i\alpha_i - \sum_j n_j \beta_j.
\]
That is, two such 2-chains represent the same element of
$H_2(Y,\alpha,\beta)$ if and only if their difference is the boundary
of a 3-chain.  Thus $H_2(Y,\alpha,\beta)$ is an affine space over
$H_2(Y)$.
\end{definition}

\subsection{The relative first Chern class}

Fix orbit sets $\alpha=\{(\alpha_i,m_i)\}$ and
$\beta=\{(\beta_j,n_j)\}$ with $[\alpha]=[\beta]\in H_1(Y)$.
Also fix trivializations $\tau_i^+\in\mc{T}(\alpha_i)$ for each
$i$ and
$\tau_j^-\in\mc{T}(\beta_j)$ for each $j$, and denote this set of
trivialization choices by $\tau$.  Let $Z\in H_2(Y,\alpha,\beta)$.

\begin{definition}
\label{def:ctau}
Define the {\bf relative first Chern class\/}
\[
c_\tau(Z) \eqdef c_1(\xi|_Z,\tau)\in\Z
\]
as follows.  Represent $Z$ by a smooth map $f:S\to Y$, where $S$ is a
compact oriented surface with boundary.  Choose a section $\psi$ of
$f^*\xi$ over $S$ such that $\psi$ is transverse to the zero section,
and over each boundary component of $S$, the section $\psi$ is
nonvanishing and has winding number zero with respect to $\tau$.
Define
\[
c_\tau(Z) \eqdef \#\psi^{-1}(0),
\]
where `$\#$' denotes the signed count.
\end{definition}

It is not hard to show that $c_\tau(Z)$ is well defined.  Moreover
\begin{equation}
\label{eqn:cAmb}
c_\tau(Z) - c_\tau(Z') = \langle c_1(\xi),Z-Z'\rangle,
\end{equation}
where $c_1(\xi)\in H^2(Y;\Z)$ denotes the ordinary first Chern class.
Finally, if $\tau'=(\{{\tau_i^+}'\},\{{\tau_j^-}'\})$ is another collection of
trivialization choices, then
\begin{equation}
\label{eqn:cTriv}
c_\tau(Z) - c_{\tau'}(Z) = \sum_i
m_i({\tau_i^+}-{\tau_i^+}') - \sum_j n_j({\tau_j^-}-{\tau_j^-}').
\end{equation}

\subsection{Braids around Reeb orbits}

Let $\gamma$ be an embedded Reeb orbit and let $m$ be a positive
integer.

\begin{definition}
  A {\bf braid\/} around $\gamma$ with $m$ strands is an oriented link
  $\zeta$ contained in a tubular neighborhood $N$ of $\gamma$ such
  that the tubular neighborhood projection $\zeta\to\gamma$ is an
  orientation-preserving degree
  $m$ submersion.
\end{definition}

We now define the writhe, linking number, and winding number of braids
around $\gamma$, which will be used repeatedly below.  For this
purpose choose a trivialization $\tau$ of $\gamma^*\xi$.  Extend the
trivialization to identify the tubular neighborhood $N$ with
$S^1\times D^2$, so that the projection of $\zeta$ to $S^1$ is a
submersion.  Identify $S^1\times D^2$ with a solid torus in $\R^3$ via
the orientation-preserving diffeomorphism sending
\[
(\theta,(x,y)) \longmapsto
(1+x/2)(\cos\theta,\sin\theta,0)-(0,0,y/2).
\]
This defines an embedding $\phi_\tau:N\to\R^3$.  Now $\phi_\tau(\zeta)$
is an oriented link in $\R^3$ with no vertical tangents.  As such, it
has a well-defined writhe, which is the signed count of the crossings
in the projection to $\R^2\times\{0\}$, after perturbing the link to
have generic crossings.  The sign convention is that counterclockwise
twists contribute positively to the writhe.

\begin{definition}
If $\zeta$ is a braid around $\gamma$, define the {\bf writhe\/}
\[
w_\tau(\zeta)\in\Z
\]
to be the writhe of the oriented link $\phi_\tau(\zeta)$ in $\R^3$.
\end{definition}

This depends only on the isotopy class of
$\zeta$ and the homotopy class of $\tau$ in $\mc{T}(\gamma)$.  If
$\tau'\in\mc{T}(\gamma)$ is another trivialization, and if $\zeta$ has
$m$ strands, then
\begin{equation}
\label{eqn:wTriv}
w_\tau(\zeta) - w_{\tau'}(\zeta) = m(m-1)(\tau' - \tau),
\end{equation}
because shifting the trivialization by one adds a full clockwise twist
to the braid $\phi_\tau(\zeta)$.

\begin{definition}
If $\zeta_1$ and $\zeta_2$ are disjoint braids around $\gamma$, define
the {\bf linking number\/}
\[
\ell_\tau(\zeta_1,\zeta_2) \in \Z
\]
to be the linking number of the oriented links $\phi_\tau(\zeta_1)$
and $\phi_\tau(\zeta_2)$ in $\R^3$.  The latter is, by definition, one
half the signed count of crossings of a strand of $\phi_\tau(\zeta_1)$
with a strand of $\phi_\tau(\zeta_2)$ in the projection to
$\R^2\times\{0\}$.
\end{definition}

Similarly to \eqref{eqn:wTriv}, if $\zeta_k$ has $m_k$ strands, then
\begin{equation}
\label{eqn:ltriv}
\ell_\tau(\zeta_1,\zeta_2) - \ell_{\tau'}(\zeta_1,\zeta_2) =
m_1m_2(\tau'-\tau).
\end{equation}
Also note that
\begin{equation}
\label{eqn:writheUnion}
w_\tau(\zeta_1\cup\zeta_2) = w_\tau(\zeta_1) + w_\tau(\zeta_2) + 2
\ell_\tau(\zeta_1,\zeta_2).
\end{equation}

\begin{definition}
If $\zeta$ is a braid around $\gamma$ which is disjoint from $\gamma$,
define the {\bf winding number\/}
\[
\eta_\tau(\zeta) \eqdef \ell_\tau(\zeta,\gamma)\in\Z.
\]
\end{definition}

\subsection{The relative intersection pairing}
\label{sec:Q}

\begin{definition}
Let $\alpha=\{(\alpha_i,m_i)\}$ and $\beta=\{(\beta_j,n_j)\}$ be orbit
sets with $[\alpha]=[\beta]$, and let $Z\in H_2(Y,\alpha,\beta)$.
An {\bf admissible representative\/} of $Z$ is a smooth map $f:S\to
[-1,1]\times Y$, where $S$ is a compact oriented surface with
boundary, such that:
\begin{itemize}
\item
The restriction of $f$ to $\partial S$ consists of positively oriented
covers of $\{1\}\times \alpha_i$ with total multiplicity $m_i$ and
negatively oriented covers of $\{-1\}\times\beta_j$ with total
multiplicity $n_j$.
\item The composition of $f$ with the projection $[-1,1]\times
  Y\to Y$ represents the class $Z$.
\item The restriction of $f$ to the interior of $S$ is an embedding,
  and $f$ is transverse to $\{-1,1\}\times Y$.
\end{itemize}
We will generally abuse notation and denote the admissible
representative by $S$.  It is not hard to see that any class $Z$ has
an admissible representative; we will construct some special admissible
representatives in \S\ref{sec:Qweta} below.
\end{definition}

If $S$ is an admissible representative of $Z$, then for $\epsilon>0$
sufficiently small, $S\cap (\{1-\epsilon\}\times Y)$ consists of
braids $\zeta_i^+$ with $m_i$ strands in disjoint tubular
neighborhoods of the Reeb orbits $\alpha_i$, which are well defined up
to isotopy.  Likewise $S\cap (\{-1+\epsilon\}\times Y)$ consists of
disjoint braids $\zeta_j^-$ with $n_j$ strands in disjoint tubular
neighborhoods of the Reeb orbits $\beta_j$.  If $S'$ is an admissible
representative of $Z'\in H_2(Y,\alpha',\beta')$, such that the
interior of $S'$ does not intersect the interior of $S$ near the
boundary, with braids ${\zeta_i^+}'$ and ${\zeta_j^-}'$, define the
linking number
\[
\ell_\tau(S,S') \eqdef \sum_i \ell_\tau(\zeta_i^+,{\zeta_i^+}')
- \sum_j \ell_\tau(\zeta_j^-,{\zeta_j^-}').
\]
Here we are using the same index $i$ for the orbit sets $\alpha$ and
$\alpha'$, so that sometimes $m_i=0$ or $m_i'=0$, and likewise the same index
$j$ for the orbit sets $\beta$ and $\beta'$; and $\tau$ is a
trivialization of $\xi$ over all Reeb orbits in $\alpha$, $\alpha'$,
$\beta$, and $\beta'$.

\begin{definition}
\label{def:Q}
If $Z\in H_2(Y,\alpha,\beta)$ and $Z'\in H_2(Y,\alpha',\beta')$,
define the {\bf relative intersection number\/}
\[
Q_\tau(Z,Z')\in\Z
\]
as follows.  Choose admissible representatives $S$ of $Z$ and $S'$ of
$Z'$ whose interiors $\dot{S}$ and $\dot{S}'$ are transverse and do
not intersect near the boundary.  Define
\[
Q_\tau(Z,Z') \eqdef \#(\dot{S}\cap \dot{S}') - \ell_\tau(S,S').
\]
\end{definition}

It follows from \cite[Lemmas 2.5 and 8.5]{pfh2} that this is well
defined, and moreover
\begin{equation}
\label{eqn:QAmb}
Q_\tau(Z_1,Z') - Q_\tau(Z_2,Z') = (Z_1-Z_2)\cdot[\alpha'],
\end{equation}
where `$\cdot$' denotes the ordinary intersection number in $Y$.
Clearly $Q_\tau$ is symmetric: $Q_\tau(Z,Z')=Q_\tau(Z',Z)$.  Also, it
follows from \eqref{eqn:ltriv} that if
$\tau'=(\{{\tau_i^+}'\},\{{\tau_j^-}'\})$ is another collection of
trivialization choices, then
\begin{equation}
\label{eqn:QTriv}
Q_\tau(Z,Z') - Q_{\tau'}(Z,Z') = \sum_i m_im_i'({\tau_i^+}-{\tau_i^+}') -
\sum_j n_jn_j'({\tau_j^-} - {\tau_j^-}').
\end{equation}
The most important case is where $Z=Z'$; we denote this by
\[
Q_\tau(Z) \eqdef Q_\tau(Z,Z).
\]

\subsection{Definition of the ECH index}
\label{sec:DECHI}

\begin{notation}
If $\alpha=\{(\alpha_i,m_i)\}$ is an orbit set and $\tau=\{\tau_i\}$ is a
trivialization of $\xi$ over the $\alpha_i$'s, define
\[
\mu_\tau(\alpha) \eqdef \sum_i \sum_{k=1}^{m_i}
\op{CZ}_{\tau_i}(\alpha_i^k).
\]
By equation \eqref{eqn:CZTriv}, if $\tau'=\{\tau_i'\}$ is another set
of trivialization choices, then
\begin{equation}
\label{eqn:muTriv}
\mu_\tau(\alpha) - \mu_{\tau'}(\alpha) = \sum_i(m_i^2+m_i)(\tau_i'-\tau_i).
\end{equation}
\end{notation}

\begin{definition}
  Let $\alpha$ and $\beta$ be orbit sets with
  $[\alpha]=[\beta]=[\Gamma]\in H_1(Y)$, and let $Z\in
  H_2(Y,\alpha,\beta)$.  Define the {\bf ECH index\/}
\[
I(\alpha,\beta,Z) \eqdef c_\tau(Z) + Q_\tau(Z) +
\mu_\tau(\alpha) - \mu_\tau(\beta).
\]
Here $\tau$ is a trivialization of $\xi$ over the $\alpha_i$'s and
$\beta_j$'s.  It follows from equations \eqref{eqn:cTriv},
\eqref{eqn:QTriv}, and \eqref{eqn:muTriv} that $I$ does not depend on
$\tau$.
\end{definition}

We can also define a version of $I$ does not depend on a class $Z$.
Namely, by equations \eqref{eqn:cAmb} and
\eqref{eqn:QAmb}, if $Z'\in H_1(Y,\alpha,\beta)$, then we have the
{\em index ambiguity formula\/}
\[
I(\alpha,\beta,Z) - I(\alpha,\beta,Z') = \langle c_1(\xi) + 2
\op{PD}(\Gamma), Z-Z'\rangle.
\]
Here $\op{PD}(\Gamma)\in H^2(Y;\Z)$ denotes the Poincare dual of
$\Gamma$.  Thus the following definition makes sense:

\begin{definition}
If $\alpha$ and $\beta$ are orbit sets with $[\alpha]=[\beta]=\Gamma$,
define
\begin{equation}
\label{eqn:Ialphabeta}
I(\alpha,\beta) \eqdef I(\alpha,\beta,Z) \in \Z/d(c_1(\xi) + 2
\op{PD}(\Gamma))
\end{equation}
where $Z$ is any class in $H_2(Y,\alpha,\beta)$, and $d$ denotes
divisibility in $H^2(Y;\Z)$ modulo torsion.
\end{definition}

\begin{remark}
  It is easy to show, see \cite{pfh2}, that $I$ is {\em additive\/} in
  the following sense: if $\gamma$ is another orbit set with
  $[\gamma]=\Gamma$, and if $W\in H_2(Y,\beta,\gamma)$, then
\[
I(\alpha,\beta,Z) + I(\beta,\gamma,W) = I(\alpha,\gamma,Z+W).
\]
Thus $I$ defines a relative grading on ECH
generators.
\end{remark}

\section{An absolute ECH index}
\label{sec:AG}

We now explain how to refine the relative index $I(\alpha,\beta)$ in
\eqref{eqn:Ialphabeta} to an absolute index, which associates to each
orbit set a homotopy class of oriented 2-plane fields on $Y$.

\subsection{Homotopy classes of oriented $2$-plane fields}
\label{sec:TP}

Before stating the result, we briefly recall some basic facts about
homotopy classes of oriented $2$-plane fields which we will need.  For
proofs of the less obvious of these facts see e.g.\ \cite[\S4]{gompf}
and \cite[Ch.\ 28]{km}.

Let $Y$ be a connected, closed oriented $3$-manifold.  Let $\mc{P}(Y)$
denote the set of homotopy classes of oriented $2$-plane fields on
$Y$.  This is the same as the set of homotopy classes of nonvanishing
vector fields on $Y$.  Let $\Spinc(Y)$ denote the set of spin-c
structures on $Y$; this is an affine space over $H^2(Y;\Z)$.  There is
a surjection
\[
\frak{s}: \mc{P}(Y) \longrightarrow \Spinc(Y).
\]
Given two oriented $2$-plane fields $\xi_1$ and $\xi_2$, the primary
obstruction to finding a homotopy between them is the difference
between the corresponding spin-c structures,
\begin{equation}
\label{eqn:ob2}
\frak{s}(\xi_1) - \frak{s}(\xi_2) \in H^2(Y;\Z).
\end{equation}
Note that
\[
c_1(\xi_1) - c_1(\xi_2) = 2(\frak{s}(\xi_1) - \frak{s}(\xi_2)).
\]
In particular, if $\xi_1$
and $\xi_2$ determine the same spin-c structure, then
$c_1(\xi_1)=c_1(\xi_2)$, and the secondary
obstruction to finding a homotopy between them is a class
\begin{equation}
\label{eqn:ob3}
[\xi_1] - [\xi_2] \in \Z/d(c_1(\xi_1)).
\end{equation}
Thus the set of homotopy classes of $2$-plane fields determining this
spin-c structure is an affine space over $\Z/d(c_1(\xi_1))$.  Our sign
convention for the affine structure is specified by the isomorphism
$\pi_3(S^2)\simeq\Z$ that identifies the Hopf fibration with $+1$.

It will be useful below to understand the obstructions \eqref{eqn:ob2}
and \eqref{eqn:ob3} in terms of Thom-Pontrjagin theory as follows.
Let $\mc{L}(Y)$ denote the set of oriented framed links in $Y$, modulo
framed link cobordism.  We have a surjection $\mc{L}(Y)\to H_1(Y)$
sending a link $L$ to its homology class $[L]\in H_1(Y)$.  There is
also a $\Z$-action on $\mc{L}(Y)$ by twisting the framings; on the set
of elements of $\mc{L}(Y)$ with homology class $\Gamma\in H_1(Y)$, the
stabilizer of this $\Z$ action is $2d(\Gamma)$. Our sign convention
for this $\Z$-action is given as in \eqref{eqn:FSC}, but with
$\op{Sp}(2,\R)$ replaced by $\op{GL}^+(2,\R)$.
Now fix a
trivialization $\rho$ of $TY$.  Then an oriented $2$-plane field,
regarded as a nonvanishing vector field, defines a map $Y\to S^2$, and
the inverse image of a regular value of this map is an oriented framed
link.  This construction defines a bjiection
\[
L_\rho: \mc{P}(Y) \longrightarrow \mc{L}(Y)
\]
satisfying $2 [L_\rho(\xi)] = \op{PD}(c_1(\xi))$.  In terms of this
correspondence, the two-dimensional obstruction
\eqref{eqn:ob2} is given by
\begin{equation}
\label{eqn:TP2}
\frak{s}(\xi_1) - \frak{s}(\xi_2) = \op{PD}([L_\rho(\xi_1)] -
[L_\rho(\xi_2)]).
\end{equation}
The three-dimensional obstruction \eqref{eqn:ob3} is described as
follows: if $\xi_1$ and $\xi_2$ determine the same spin-c structure,
then
\begin{equation}
\label{eqn:TP3}
[\xi_1] - [\xi_2] = L_\rho(\xi_1) - L_\rho(\xi_2).
\end{equation}
This last equation means that the framed link cobordism classes
$L_\rho(\xi_1)$ and $L_\rho(\xi_2)$ can be represented by the same
link, but with the framings differing by $[\xi_1]-[\xi_2]$.

Now suppose we allow our compact connected oriented 3-manifold $Y$ to
have boundary.  Let $\xi_0$ be an oriented rank $2$ subbundle of
$TY|_{\partial Y}$.  Define $\mc{P}(Y,\xi_0)$ to be the set of
homotopy classes of oriented $2$-plane fields on $Y$ that restrict to
$\xi_0$ on $\partial Y$.  (These correspond to spin-c structures
$\frak{s}$ on $Y$ together with an isomorphism $\frak{s}|_{\partial
  Y}\simeq\frak{s}_0$ where $\frak{s}_0$ is a fixed spin-c structure
on $\partial Y$ determined by $\xi_0$.)  Given two elements
$\xi_1,\xi_2\in\mc{P}(Y,\xi_0)$, the primary obstruction to finding a
homotopy between them is an element of $H^2(Y,\partial Y;\Z)$.  The
image of this obstruction in $H^2(Y;\Z)$, multiplied by $2$, equals
$c_1(\xi_1)-c_1(\xi_2)$.  If the primary obstruction vanishes, then
the secondary obstruction is an element of $\Z/d(c_1(\xi_1))$.  To
describe these obstructions in terms of Thom-Pontrjagin theory, choose
a trivialization $\rho$ of $TY$.  This gives rise to a
zero-dimensional nullhomologous oriented framed submanifold
$F\subset\partial Y$.  Let $\mc{L}(Y,F)$ denote the set of
oriented framed links on $Y$ with boundary $F$, modulo framed
cobordism relative to $F$.  Then as before we have a bijection
\[
L_\rho: \mc{P}(Y,\xi_0) \longrightarrow \mc{L}(Y,F).
\]
Given $\xi_1,\xi_2\in\mc{P}(Y,\xi_0)$, the primary obstruction to
finding a homotopy between them is Poincar\'{e} dual to the difference
in relative homology classes $[L_\rho(\xi_1)]-[L_\rho(\xi_2)]\in
H_1(Y)$ as in \eqref{eqn:TP2}, and if this vanishes then the secondary
obstruction is the difference in framings as in \eqref{eqn:TP3}.

\subsection{Statement of the result}

Fix a connected closed oriented 3-manifold $Y$ with a stable Hamiltonian structure
such that all Reeb orbits are nondegenerate.

\begin{theorem}
\label{thm:AG}
For each orbit set $\alpha=\{(\alpha_i,m_i)\}$, there is a
homotopy class of oriented $2$-plane fields $I(\alpha)\in\mc{P}(Y)$,
such that:
\begin{description}
\item{(a)} $I(\alpha)$ is obtained by modifying $\xi$ in a canonical
  manner (up to homotopy, depending only on $m_i$) in disjoint tubular
  neighborhoods of each $\alpha_i$.
\item{(b)} $\frak{s}(I(\alpha)) = \frak{s}(\xi) + \op{PD}([\alpha])$.
\item{(c)} If $\alpha$ and $\beta$ are orbit sets with
  $[\alpha]=[\beta]=\Gamma$, then
\begin{equation}
\label{eqn:IRA}
I(\alpha,\beta) = I(\alpha) - I(\beta)
\end{equation}
in $\Z/d(c_1(\xi) + 2
\op{PD}(\Gamma))$.
\end{description}
\end{theorem}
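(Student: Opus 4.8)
**

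The plan is to construct $I(\alpha)$ by explicit local surgery on the contact structure $\xi$ near each embedded Reeb orbit $\alpha_i$, and then to verify properties (a), (b), (c) using the Thom-Pontrjagin description from \S\ref{sec:TP}.

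First I would define the local model. Fix an embedded Reeb orbit $\gamma$, a trivialization $\tau$ of $\gamma^*\xi$, and a positive integer $m$. In the tubular neighborhood $N\cong S^1\times D^2$ determined by $\tau$, I want to replace $\xi$ by a $2$-plane field $\xi_m$ that agrees with $\xi$ near $\partial N$ but "spins" $m$ extra times as one goes around $\gamma$. Concretely, regarding $\xi$ as a nonvanishing vector field (transverse to $\xi$, i.e.\ $R$ itself works near $\gamma$, or any fixed vector field), I would prescribe that $\xi_m$ restricted to the core circle $\gamma$ points in a direction that winds, and interpolate radially so that at $\partial D^2$ we recover $\xi$. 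The key point is that the homotopy class of this local modification rel boundary should depend only on $m$ and not on the chosen trivialization $\tau$: if one changes $\tau$ by one unit, the tubular neighborhood framing changes, and I need to check that the "number of extra spins" is shifted in a compensating way so that the resulting global $2$-plane field is unchanged up to homotopy. This rigidity is what makes (a) meaningful, and verifying it carefully — tracking how the $\U(1)$-valued transition data transforms under $\tau\mapsto\tau+1$ and confirming the net effect on $L_\rho$ is trivial — is where I expect the main subtlety to lie.

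Next, for property (b), I would compute $L_\rho(I(\alpha))$ using the Thom-Pontrjagin picture: away from the neighborhoods $N_i$, $I(\alpha)$ equals $\xi$, so $L_\rho(I(\alpha))$ and $L_\rho(\xi)$ agree outside $\bigcup_i N_i$; inside each $N_i$, the local modification changes the preimage link by a loop homologous to a multiple of $\gamma_i$, and the multiplicity should work out so that $[L_\rho(I(\alpha))] - [L_\rho(\xi)] = \sum_i m_i[\alpha_i] = [\alpha]$ in $H_1(Y)$. Then by equation \eqref{eqn:TP2} we get $\frak{s}(I(\alpha)) - \frak{s}(\xi) = \op{PD}([\alpha])$, which is (b). Getting the multiplicity of the local link change to come out to exactly $m_i$ (rather than, say, $m_i+1$ or $2m_i$) is a normalization that I would pin down when defining $\xi_m$, and it should be consistent with the $(m_i^2+m_i)$ and $m_i(m_i-1)$ coefficients appearing in \eqref{eqn:muTriv} and \eqref{eqn:wTriv}.

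Finally, for property (c), I would compute the secondary obstruction $[I(\alpha)] - [I(\beta)] \in \Z/d(c_1(\xi)+2\op{PD}(\Gamma))$ and match it against $I(\alpha,\beta,Z) = c_\tau(Z) + Q_\tau(Z) + \mu_\tau(\alpha) - \mu_\tau(\beta)$. The strategy is to choose an admissible representative $S$ of $Z\in H_2(Y,\alpha,\beta)$ in $[-1,1]\times Y$, use $S$ together with the trivialization $\rho$ to build an explicit homotopy between $I(\alpha)$ and $I(\beta)$ over $Y$ minus a small ball, and count the obstruction to extending over that ball. The relative Chern class $c_\tau(Z)$ should account for the zeros of a section of $\xi$ along $S$, $Q_\tau(Z)$ (via writhe/linking data of the braids $\zeta_i^\pm$) should account for how the local models near $\alpha_i$ and $\beta_j$ glue along $S$, and $\mu_\tau(\alpha)-\mu_\tau(\beta)$ should account for the Conley-Zehnder contributions from the spinning in the local models; the fact that all the $\tau$-dependence cancels (by \eqref{eqn:cTriv}, \eqref{eqn:QTriv}, \eqref{eqn:muTriv}) mirrors the $\tau$-independence of $I(\alpha,\beta,Z)$ and is a useful consistency check. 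Assembling this identification — essentially re-deriving the ECH index formula as an obstruction-theoretic count — is the technical heart of (c), and it is where I would spend most of the work; the additivity remark at the end of \S\ref{sec:DECHI} then guarantees consistency of \eqref{eqn:IRA} across composable pairs.
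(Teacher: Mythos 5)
There is a genuine gap, and it is in the construction itself, before properties (b) and (c) are even addressed. Your local model --- ``spin $\xi$ an extra $m$ times along the core of a tubular neighborhood of $\alpha_i$'' --- carries no correction terms, and the rigidity you hope for (that changing the trivialization $\tau$ by one unit leaves the resulting homotopy class unchanged) is false: exactly as in Lemma~\ref{lem:PtauL}(b), a plane-field modification prescribed relative to a framing changes, under a unit change of framing, by a nonzero integer shift of the secondary obstruction (by $2$ per strand of the Thom--Pontrjagin link), and nothing in your construction compensates for this. The paper's definition \eqref{eqn:DAG} is not a bare local modification: one modifies $\xi$ along a transverse braid $\zeta_i$ with $m_i$ strands and then shifts the homotopy class by the integer $-w_{\tau_i}(\zeta_i)+\sum_{k=1}^{m_i}\op{CZ}_{\tau_i}(\alpha_i^k)$, and well-definedness is a three-way cancellation: the plane field shifts by $2m_i$ per unit change of $\tau_i$, the writhe by $m_i(m_i-1)$, and $\mu_\tau$ by $m_i^2+m_i$ (equations \eqref{eqn:wTriv}, \eqref{eqn:muTriv}), with $2m_i+m_i(m_i-1)=m_i^2+m_i$. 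Moreover, even if you fixed a framing-independent twisting by fiat, property (c) would fail: matching $I(\alpha,\beta,Z)=c_\tau(Z)+Q_\tau(Z)+\mu_\tau(\alpha)-\mu_\tau(\beta)$ forces the absolute class to encode the full Conley--Zehnder sum $\mu_\tau(\alpha)$, which for elliptic orbits depends on the monodromy angle and not just on $m_i$; the correct statement is that the $\tau$-dependence of the construction cancels against the $\tau$-dependence of $c_\tau+Q_\tau+\mu_\tau$, not that the local model is intrinsically framing-free.

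Your outline of (c) is the right general idea (the paper also passes to an embedded surface in $Y$ coming from a representative of $Z$, uses the conormal framing, and invokes the analogue of Lemma~\ref{lem:PtauL}(d)), but the substantive content is missing: one needs the identity expressing $Q_\tau(Z)$ through the writhes and winding numbers of the boundary braids of such a surface (Lemmas~\ref{lem:Qweta} and \ref{lem:Qwetahat}), including the case where $\alpha$ and $\beta$ share Reeb orbits, which is handled by splitting off the ``greatest common factor'' $\gamma$ and the extra linking term $\ell_\tau(\widehat{S},\R\times\gamma)$; and one must track the discrepancy between the conormal framing and the $\tau$-induced framing via the winding numbers $\eta_\tau$, which produces the $2\eta_\tau(\widehat{S})$ correction through Lemma~\ref{lem:PtauL}(b). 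As written, ``$c_\tau$ should account for\dots, $Q_\tau$ should account for\dots'' is a plan for exactly the computation that constitutes the proof, so even granting a repaired definition, the argument for \eqref{eqn:IRA} is not yet there.
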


\begin{remark}
  Part (c) asserts that $I$ defines an absolute grading on ECH
  which refines the relative grading in \eqref{eqn:Ialphabeta}.
  Part (a) implies that the absolute grading $I$ of the empty
  set\footnote{If $Y$ is a contact manifold, then the empty set is a
    very important ECH generator, which is a cycle in the ECH chain
    complex, whose homology class in ECH conjecturally agrees with the
    contact invariants in the Seiberg-Witten and Heegaard Floer
    homologies.} is the homotopy class of the 2-plane field $\xi$
  itself.  Part (b) asserts that $I(\alpha)$ determines the
  correct spin-c structure, so that it makes sense to conjecture that
  Taubes's isomorphism \eqref{eqn:echswf} between ECH and
  Seiberg-Witten Floer homology respects the absolute gradings.
\end{remark}

\subsection{Modifying the $2$-plane field near transversal
  links}

To prepare for the proof of the theorem, consider a transversal link
$L\subset Y$.  This means that $L$ is transverse to the $2$-plane
field $\xi$ at every point.  As such, $L$ has a canonical
orientation.  Now let $\tau$ be a framing of $L$, i.e.\ a homotopy
class of symplectic trivialization of $\xi|_L$.

\begin{definition}
\label{def:PtauL}
  Given a transversal link $L$ with framing $\tau$, define a homotopy
  class of oriented $2$-plane fields $P_\tau(L)$ as follows.

  Let $N$ be a tubular neighborhood of $L$. On $Y\setminus N$,
  take $P_\tau(L)\eqdef \xi$.

  To describe $P\eqdef P_\tau(L)$ on $N$, for each component $K$ of
  $L$, let $N_K$ denote the corresponding component of $N$. Choose a
  diffeomorphism
\[
\phi_K: N_K \stackrel{\simeq}{\longrightarrow} S^1\times D^2
\]
such that $\phi_K$ sends $K$ to $S^1\times\{0\}$, and the derivative
$d\phi_K$ sends $\xi|_{K}$ to $\{0\}\oplus\R^2$, compatibly with the
framing $\tau$.  Extend the latter to a trivialization of $TN_K$,
identifying $\xi=\{0\}\oplus\R^2$ and $R=(1,0,0)$ at each point.
On $N_K$, choose $P$, regarded as a vector field, so that:
\begin{itemize}
\item 
On $S^1\times\{z\in D^2 \mid |z|>1/2\}$, the vector field $P$
intersects $\xi$ positively.
\item
On $S^1\times\{z\in D^2 \mid |z|<1/2\}$ the vector field $P$
intersects $\xi$ negatively.
\item On $S^1\times\{z\in D^2 \mid |z|=1/2\}$, the vector field $P$,
  regarded using the above trivialization as a function with values in
  $\R\oplus\R^2$, is given by
\begin{equation}
\label{eqn:P12}
P(t,e^{i\theta}/2) \eqdef (0,e^{-i\theta}).
\end{equation}
\end{itemize}
These conditions uniquely determine $P_\tau(L)$ up to homotopy.
\end{definition}

The following are some basic properties of $P_\tau(L)$.

\begin{lemma}
\label{lem:PtauL}
\begin{description}
\item{(a)}
$\frak{s}(P_\tau(L)) = \frak{s}(\xi) + \op{PD}([L])$.
\item{(b)}
If $\tau'$ is a different framing of $L$, then
\[
P_\tau(L) - P_{\tau'}(L) \equiv 2(\tau-\tau') \mod d(c_1(\xi) + 2
\op{PD}([L])).
\]
\item{(c)} If $L_\pm$ is obtained from $L$ by locally fusing two
  strands into a crossing of sign $\pm 1$ as shown in
  Figure~\ref{fig:skein}, and if the framings $\tau$ of $L_\pm$ and
  $L$ are the blackboard framing in Figure~\ref{fig:skein} and agree
  everywhere else, then
\[
P_\tau(L_\pm) - P_\tau(L) \equiv \pm 1 \mod d(c_1(\xi) + 2\op{PD}([L]).
\]
\item{(d)} Let $S\subset Y$ be an embedded compact oriented surface
  with $\partial S = \widehat{L}_1 \sqcup -\widehat{L}_2$ where
  $\widehat{L}_1$ and $\widehat{L}_2$ are transversal links.  Let
  $L_0$ be a transversal link disjoint from $S$, let $L_1\eqdef
  \widehat{L}_1\sqcup L_0$, and $L_2\eqdef \widehat{L}_2\sqcup L_0$.
  Then
\[
P_\tau(L_1) - P_\tau(L_2) \equiv c_1(\xi|_S,\tau) \mod
d(c_1(\xi)+2\op{PD}([L_1])),
\]
where the framings $\tau$ are induced from the conormal direction to
$S$ on $\widehat{L}_1$ and $\widehat{L}_2$ and
an arbitrary framing on $L_0$.
\end{description}
\end{lemma}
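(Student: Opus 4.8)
The plan is to derive all four parts from the Thom--Pontrjagin dictionary recalled in \S\ref{sec:TP}. Fix a trivialization $\rho$ of $TY$, so that an oriented $2$-plane field becomes (after normalizing) a map $Y\to S^2$, and $L_\rho$ assigns to it the oriented framed link obtained as the preimage of a regular value. The engine for everything is a local description of $P\eqdef P_\tau(L)$ in each tubular neighborhood $N_K\cong S^1\times D^2$ used in Definition~\ref{def:PtauL}: with respect to the trivialization of $TN_K$ chosen there (in which $R=(1,0,0)$ and $\xi=\{0\}\oplus\R^2$), the field $\xi$ is the constant map to the north pole $(1,0,0)$, while $P$ sends $S^1\times\{|z|=1\}$ to the north pole, $S^1\times\{0\}$ to the south pole, and $S^1\times\{|z|=1/2\}$ to the equator, winding once as dictated by \eqref{eqn:P12}. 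Thus on each normal disk $P$ factors through the collapse $D^2\to D^2/\partial D^2=S^2$ followed by a map of degree $+1$ (the sign coming from the conjugate in \eqref{eqn:P12} together with the orientation conventions). All four parts are then read off this picture, with care devoted to the orientation and framing conventions.

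Part (a): with a generic regular value chosen once and for all, $L_\rho(P_\tau(L))$ agrees with $L_\rho(\xi)$ on $Y\setminus N$, while inside each $N_K$ the preimage of the regular value is a single circle isotopic to the core $K$, carrying sign $+1$ by the degree computation. Hence $[L_\rho(P_\tau(L))]-[L_\rho(\xi)]=[L]$ in $H_1(Y)$, and \eqref{eqn:TP2} gives $\frak{s}(P_\tau(L))=\frak{s}(\xi)+\op{PD}([L])$; in particular $c_1(P_\tau(L))=c_1(\xi)+2\op{PD}([L])$, so the differences in (b)--(d) do live in $\Z/d(c_1(\xi)+2\op{PD}([L]))$. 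Part (b): by (a) the fields $P_\tau(L)$ and $P_{\tau'}(L)$ have the same spin-c structure, so their difference lies in the above cyclic group and, by \eqref{eqn:TP3}, equals the change in the induced framing of the core copies of $L$. Changing $\tau$ by one unit shifts both the neighborhood identification $\phi_K$ and the trivialization of $TN_K$; tracking this through the local model shows the induced framing on $L$ changes by $2$, and this factor of $2$ --- the $2$-plane field avatar of the factor of $2$ in the transformation laws \eqref{eqn:CZTriv}, \eqref{eqn:muTriv}, \eqref{eqn:QTriv} --- is the one point of the lemma that requires genuine care.

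Part (c): $L_\pm$ and $L$ coincide outside the ball of Figure~\ref{fig:skein}, so $[L_\pm]=[L]$ and by (a) the difference is well defined; the corresponding $2$-plane fields agree outside a neighborhood of that ball, and the local difference modifies the framed link $L_\rho(\cdot)$ by inserting a crossing of sign $\pm1$ (the blackboard framing following the crossing), which by \eqref{eqn:TP3} shifts the class by $\pm1$ --- the ``half'' of the writhe change, compare \eqref{eqn:writheUnion}. Part (d): since $\partial S=\widehat L_1-\widehat L_2$ we have $[L_1]=[L_2]$, so (a) makes the difference well defined. To evaluate it, I would realize a framed cobordism in $Y\times[1,2]$ from $L_\rho(P_\tau(L_1))$ to $L_\rho(P_\tau(L_2))$ built from $L_\rho(\xi)\times[1,2]$, $L_0\times[1,2]$, and the surface $S$ (with its boundary flared out to the two levels). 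The cylindrical pieces contribute no obstruction since their end framings agree, so the obstruction to completing the cobordism --- equivalently the sought difference --- is the relative Euler number of the normal bundle of $S$ in $Y\times[1,2]$ with the prescribed boundary framing. That normal bundle is $\nu_{S/Y}\oplus\underline{\R}$; since $S$ has boundary, $TS$ is trivial, and $TY|_S$ equals both $\xi|_S\oplus\underline{\R}$ (using $R$ to trivialize the complement of $\xi$) and $TS\oplus\nu_{S/Y}$, so $\nu_{S/Y}\oplus\underline{\R}$ is stably isomorphic to $\xi|_S$, and under this identification the conormal framing $\tau$ on $\partial S$ corresponds to the data in Definition~\ref{def:ctau}. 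Hence the relative Euler number equals $c_\tau(Z)=c_1(\xi|_S,\tau)$, which is part (d). (Alternatively, (d) can be proved by choosing a generic section $\psi$ of $\xi|_S$ that is nonvanishing with winding number zero along $\partial S$ relative to $\tau$, using $\psi$ over a neighborhood $S\times(-\eps,\eps)$ to homotope $P_\tau(L_1)$ to a field equal to $P_\tau(L_2)$ away from $\psi^{-1}(0)$ and differing near each zero by a small $P_\tau$-modification contributing $\pm1$, and summing to $\#\psi^{-1}(0)=c_1(\xi|_S,\tau)$.)

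The main obstacle is part (d): making the chosen framed cobordism, and the stable identification of $\nu_{S/Y}\oplus\underline{\R}$ with $\xi|_S$ compatibly with the boundary framings, fully precise (or, in the alternative route, rigorously localizing the homotopy at the zeros of $\psi$), and --- here and especially in (b) --- pinning down the orientation and framing conventions so that the answers come out with the stated signs rather than negated. A useful internal check is that (b), (c), and (d) must agree on overlapping special cases: (d) applied to an annulus realizing a change of framing, or a skein crossing change, should reproduce (b) or (c).
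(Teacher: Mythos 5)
Your parts (a)--(c) follow the paper's own route: the same Thom--Pontrjagin dictionary and the same local model near each $N_K$, with (b) hinging on the claim that a unit change of $\tau$ shifts the induced framing of the core circles by two. You flag this as delicate but do not verify it; the paper does so by an explicit computation with the model field $P(t,re^{i\theta})=(-\cos(\pi r),\sin(\pi r)e^{-i\theta})$, showing that after a framing change by $k$ the Thom--Pontrjagin framing becomes $-2k$, which together with \eqref{eqn:TP3} gives (b). Up to supplying that computation, (a)--(c) are fine and essentially identical to the paper.

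Part (d), however, has a genuine gap, and it is not just a matter of ``making the cobordism precise.'' Your obstruction is the relative Euler number of the normal bundle of the $S$-piece, $\nu_{S/Y}\oplus\underline{\R}$, rel the prescribed boundary framings, and you transfer it to $c_1(\xi|_S,\tau)$ via a \emph{stable} isomorphism with $\xi|_S$. Relative Euler numbers are not stable invariants, so this transfer is invalid; worse, the quantity you compute is determined by the embedding $S\subset Y$ and a framing on $\partial S$ alone (indeed $\nu_{S/Y}$ is an oriented line bundle, so $\nu_{S/Y}\oplus\underline{\R}$ is trivial and its natural trivialization restricts on $\partial S$ to essentially the conormal framing you impose), whereas $c_1(\xi|_S,\tau)$ depends on $\xi$ over the \emph{interior} of $S$ --- a dependence your accounting never sees. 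The missing contribution hides in the step you dismiss: in $Y\times[1,2]$ the $S$-piece will in general intersect the product sheets over $L_\rho(\xi)$ (two surfaces in a four-manifold meet in points), and a framed cobordism must be embedded, so these intersections cannot be ignored. In the paper's proof this is exactly where the content lies: working rel boundary over a neighborhood of $S$ with $\rho$ adapted to $TS$, the link $L_\rho(P_\tau(\widehat{L}_i))$ consists of a pushoff of $\widehat{L}_i$ with the conormal framing \emph{together with} vertical segments over the points $T_+$ where the Reeb vector field is vertical; one finds $P_\tau(\widehat{L}_1)-P_\tau(\widehat{L}_2)=2t_+-\chi(S)$, and then Poincar\'e--Hopf ($t_+-t_-=\chi(S)$) plus the fact that projecting the vertical direction to $\xi$ gives a section of $\xi|_S$ vanishing exactly at $T_+\cup T_-$ (so $t_++t_-=c_1(\xi|_S,\tau)$) yields the result. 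Your alternative sketch via a section $\psi$ of $\xi|_S$ is closer in spirit to this, but the claimed $\pm1$ per zero is unjustified as stated (a local framing discrepancy would contribute $\pm2$ by part (b), so the precise local mechanism must be identified). Finally, note the reduction the paper makes first: closed components of $S$ must be discarded, since for them the identity holds only modulo $d(c_1(\xi)+2\op{PD}([L_1]))$; your argument does not address them.
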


\begin{figure}
\begin{center}
\scalebox{0.6}{\includegraphics{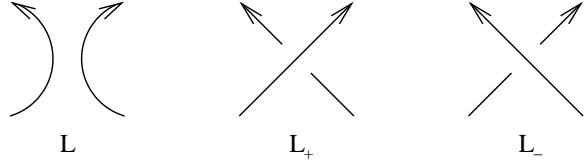}}
\end{center}
\caption{The links $L$, $L_+$, and $L_-$ in Lemma~\ref{lem:PtauL}(c).}
\label{fig:skein}
\end{figure}

\begin{proof}
We will prove all four assertions using the Thom-Pontrjagin theory
from \S\ref{sec:TP}.

To start, in the definition of $P_\tau(L)$, we can then take the
vector field $P$ on $N_K$, regarded as a function $S^1\times
D^2\to\R\oplus\R^2$, to be
\begin{equation}
\label{eqn:explicitP}
P(t,re^{i\theta}) = (-\cos(\pi r), \sin(\pi r)e^{-i\theta}).
\end{equation}
Then $(-1,0)$ is a regular value of $P$, whose inverse image is the
core circle $S^1\times\{0\} \subset S^1\times D^2$.  This circle is
oriented positively.  In terms of the Thom-Pontrjagin construction,
this means that on $N_K$,
\[
[L_\rho(P)] - [L_\rho(\xi)] = [K] \in H_1(N_K).
\]
Together with \eqref{eqn:TP2}, this implies assertion (a).

To calculate the framing on $L_\rho(P)$ above, we can take a nearby
regular value of $P$ in $S^2$ such as $(0,1,0)$.  The inverse image of
this is $S^1\times\{(1/2,0)\}$, and so $L_\rho(P)$ has framing $0$
with respect to our trivializations.  Now let $\tau'$ be another
framing of $L$ such that on $K$ we have $\tau-\tau'=k\in \Z$.  The
difference in trivializations
\[
\Phi\eqdef \tau' \circ \tau^{-1}: S^1 \longrightarrow \op{Sp}(2,\R)
\]
can be taken to be
\[
\Phi(t) = e^{ikt}.
\]
This is induced by a diffeomorphism $\widetilde{\Phi}:S^1\times D^2\to
S^1\times D^2$ given by
\[
\widetilde{\Phi}(t,re^{i\theta}) = (t,re^{i(\theta+kt)}).
\]
With respect to the previous trivialization $\rho$ of $TN_K$ coming
from $\tau$, a vector field $P'$ corresponding to $\tau'$ is given by
the function
\[
P' = (1 \oplus \Phi^{-1})\circ P\circ \widetilde{\Phi},
\]
where $P$ is the function defined by \eqref{eqn:explicitP}.  This
comes out to be
\[
P'(t,re^{i\theta}) = (-\cos(\pi r), \sin(\pi r)e^{-i(\theta + 2kt)}).
\] 
Again, the link $L_\rho(P')$ on $S^1\times S^2$ is the circle
$S^1\times\{0\}$, oriented positively.  However now the regular value
$(0,1,0)$ of the map $P'$ to $S^2$ has inverse image
$\{(t,\frac{1}{2}e^{-2ikt})\}$, so $L_\rho(P')$ has framing $-2k$.
Together with \eqref{eqn:TP3}, this implies assertion (b) of the
lemma.

Assertion (c) follows immediately from the Thom-Pontrjagin
construction.

We now prove assertion (d).  To start, we may assume that $S$ has no
closed components.  For if $S_0$ is the union of the closed components
of $S$, then
\[
c_1(\xi|_{S_0},\tau) = \langle c_1(\xi),[S_0]\rangle = \langle
c_1(\xi) + 2\op{PD}([L_1]),[S_0]\rangle,
\]
since $S_0$ is disjoint from $L_1$.  Thus removing $S_0$ from $S$ does
not affect the validity of the congruence that we need to prove.

We may then also assume that $S$ is connected, since tubing together
different components of $S$ has no effect on $c_1(\xi|_S,\tau)$.

Now let $N\subset Y$ be a neighborhood of $S$, identified with
$\widetilde{S}\times[-1,1]$, where $\widetilde{S}\supset S$ is
obtained by extending $S$ slightly past its boundary, so that the
identification $N\simeq \widetilde{S}\times[-1,1]$ sends $S$ to
$S\times\{0\}$.  Choose $N$ small enough so that it is disjoint from
$L_0$.  It is enough to show that in $\mc{P}(N,\xi|_{\partial N})$ we
have
\begin{equation}
\label{eqn:PGoal}
P_\tau(\widehat{L}_1) - P_\tau(\widehat{L}_2) = c_1(\xi|_S,\tau) \in \Z.
\end{equation}

Since $S$ has nonempty boundary, we can choose a trivialization
$\rho:TN\to N\times \R^3$ identifying $TS$ with
$S\times(\R^2\oplus\{0\})$.  We can choose this trivialization so that
$(0,0,\pm 1)$ is a regular value of the map $S\to S^2$ given by the
normalized Reeb vector field; let $T_\pm\subset S$ denote the inverse
image of $(0,0,\pm 1)$ under this map, and let $t_\pm\in\Z$ denote the
signed count of points in the set $T_\pm$.

For $i=1,2$, we can take $L_\rho(P_\tau(\widehat{L}_i))$ to be the
set of points in $N$ such that the vector field corresponding to
$P_\tau(\widehat{L}_i)$ points in the direction $(0,0,1)$.  If this
vector field is chosen appropriately, then the framed
link $L_\rho(P_\tau(\widehat{L}_i))$ consists of the following:
\begin{itemize}
\item A vertical line segment at each point in $T_+$, such that the
  number of upward pointing segments minus the number of downward
  point segments equals $t_+$.
\item A vertical pushoff of the link $\widehat{L}_i$, with the
  conormal framing.
\end{itemize}
It follows using assertion (c) and equation \eqref{eqn:TP3} that
\[
P_\tau(\widehat{L}_1) - P_\tau(\widehat{L}_2) = 2t_+ - \chi(S).
\]
Applying the Poincare-Hopf index theorem to the projection of $R$ onto
$TS$ gives
\[
t_+ - t_- = \chi(S).
\]
On the other hand, the projection of $(0,0,1)$ to $\xi$ defines a
section of $\xi|_S$ which is zero exactly where the Reeb vector field is
vertical, showing that
\[
t_+ + t_- = c_1(\xi|_S,\tau),
\]
compare \cite[\S4.2]{etnyre}. Combining the above three equations
proves \eqref{eqn:PGoal}.
\end{proof}

\begin{remark}
\label{rem:P'}
  One can define another homotopy class of oriented $2$-plane fields
  $P'(L)$, following Definition~\ref{def:PtauL}, but with equation
  \eqref{eqn:P12} replaced by
\[
P'(t,e^{i\theta}/2) \eqdef (0,e^{i\theta}).
\]
The homotopy class of oriented $2$-plane fields $P'(L)$ satisfies
$\frak{s}(P'(L)) = \frak{s}(\xi) - \op{PD}([L])$, does not depend on a
framing of $L$, satisfies the analogue of property (c) above, and the
analogue of property (d) but with the opposite sign.  When $\xi$ is a
contact structure, the homotopy class $P'(L)$ corresponds to the
contact structure obtained from $\xi$ by a {\em Lutz twist\/} along
$L$, see e.g.\ \cite{geiges}.  Although $P'(L)$ is not relevant for
Theorem~\ref{thm:AG}, it is significant in connection with defining a
relative filtration on ECH, see Proposition~\ref{prop:JBasics}.
\end{remark}

\subsection{Definition of the absolute grading}

\begin{definition}
Given an orbit set $\alpha=\{(\alpha_i,m_i)\}$, define a homotopy
class of oriented $2$-plane fields $I(\alpha)\in\mc{P}(Y)$ as
follows.  Choose trivializations $\tau=\{\tau_i\}$ of $\xi$ over the
$\alpha_i$'s.  For each $i$, choose a braid $\zeta_i$ around
$\alpha_i$ with $m_i$ strands.  Assume that the $\zeta_i$'s are in
disjoint tubular neighborhoods of the $\alpha_i$'s.  Consider the
transverse link
$L\eqdef\bigcup_i\zeta_i$, with the framing $\tau$ induced by the
$\tau_i$'s.  Define
\begin{equation}
\label{eqn:DAG}
I(\alpha) \eqdef P_\tau(L) - \sum_iw_{\tau_i}(\zeta_i) +
  \mu_\tau(\alpha).
\end{equation}
\end{definition}

\begin{lemma}
\label{lem:IWD}
$I(\alpha)$ is well-defined.
\end{lemma}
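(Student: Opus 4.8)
The plan is to show that the right-hand side of \eqref{eqn:DAG} is independent of the three choices made: the braids $\zeta_i$, the trivializations $\tau_i$, and the tubular neighborhoods (the last being automatic once the first two are handled, since any two choices of disjoint tubular neighborhoods are isotopic rel $\alpha_i$). First I would fix the trivializations $\tau$ and vary a single braid $\zeta_i$ within its tubular neighborhood $N_i$. Any two braids around $\alpha_i$ with $m_i$ strands differ by a sequence of crossing changes and Reidemeister-type moves inside the solid torus $N_i$; using Lemma~\ref{lem:PtauL}(c), each elementary move that changes the writhe $w_{\tau_i}(\zeta_i)$ by $\pm 1$ (fusing two strands into a crossing of sign $\pm 1$) also changes $P_\tau(L)$ by exactly $\pm 1$ modulo $d(c_1(\xi)+2\op{PD}([L]))$. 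Since $[L]=[\alpha]=\Gamma$, this is the same modulus appearing implicitly in $\mc{P}(Y)$ along the relevant fibre, and $\mu_\tau(\alpha)$ does not depend on $\zeta_i$ at all, so the two changes cancel and $I(\alpha)$ is unchanged. I should be a little careful that all braids with $m_i$ strands around $\alpha_i$ are connected by such moves through braids (not just through arbitrary links); this is standard, since a braid is determined up to isotopy by its underlying element of the braid group $B_{m_i}$, and the generators $\sigma_k^{\pm 1}$ are exactly the crossing moves of Figure~\ref{fig:skein}.

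Next, with the braids fixed, I would vary a single trivialization $\tau_i$ to $\tau_i'$ with $\tau_i'-\tau_i=k\in\Z$. Three terms change. By \eqref{eqn:wTriv}, $w_{\tau_i}(\zeta_i)$ changes by $-m_i(m_i-1)k$ (note the sign: $w_{\tau'}-w_\tau = -m(m-1)(\tau'-\tau)$, reading \eqref{eqn:wTriv} the other way); by \eqref{eqn:muTriv}, $\mu_\tau(\alpha)$ changes by $-(m_i^2+m_i)k$; and by Lemma~\ref{lem:PtauL}(b), $P_\tau(L)$ changes by $+2m_i k$, since the framing on the $\zeta_i$ component of $L$ is $\tau_i$ and $\zeta_i$ has $m_i$ strands — wait, one must check the multiplicity: Lemma~\ref{lem:PtauL}(b) is stated for the framing twist on a component, but here the component $\zeta_i$ is itself an $m_i$-strand braid, so shifting $\tau_i$ by one shifts the induced framing $\tau$ on the $m_i$-strand sublink, and the computation in the proof of Lemma~\ref{lem:PtauL}(b) gives a change of $2m_i^2 k$ for that sublink (each strand contributes, and the framing shift is quadratic in the number of strands just as for the writhe). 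Assembling: the total change in $I(\alpha)$ is $2m_i^2 k - (-m_i(m_i-1)k) + (-(m_i^2+m_i)k) = 2m_i^2 k + m_i(m_i-1)k - m_i(m_i+1)k = 2m_i^2k - 2m_ik$. This does not vanish, so I would need to recheck the normalization of Lemma~\ref{lem:PtauL}(b) as applied to a braid sublink — the correct statement is presumably that shifting the framing of an $m$-strand braid by one changes $P_\tau$ by $m^2 - m + \text{(linear correction)}$, or equivalently one should track the framing of the braid component as $w_{\tau_i}(\zeta_i) + (\text{self-linking of }\zeta_i)$; getting this bookkeeping exactly right is the crux.

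The main obstacle, then, is precisely this second step: pinning down how $P_\tau(L)$ depends on the framing when the "link" $L$ is a disjoint union of braids rather than an embedded link, and confirming that the writhe term $-\sum_i w_{\tau_i}(\zeta_i)$ in \eqref{eqn:DAG} is exactly the correction needed to absorb the discrepancy. My expectation is that the clean way to organize it is to first reduce to the case where each $\zeta_i$ is the $m_i$-fold cover of a pushoff of $\alpha_i$ with some framing (using Step 1 to move any braid to such a model, absorbing the writhe), and then apply Lemma~\ref{lem:PtauL}(b) and (d) to this model link, where the multiplicities enter transparently through the relation $2[L_\rho(\xi)] = \op{PD}(c_1(\xi))$ and the additivity of framings under taking iterates. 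Once the framing-dependence of $P_\tau(L) - \sum_i w_{\tau_i}(\zeta_i)$ is shown to match $-\mu_\tau(\alpha)$'s dependence via \eqref{eqn:muTriv}, independence of $\tau$ follows, and combined with Step 1 this proves the lemma.
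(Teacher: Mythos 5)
Your first step (independence of the choice of braids, for fixed trivializations) is correct and is exactly the paper's argument: inserting a crossing as in Lemma~\ref{lem:PtauL}(c) changes $P_\tau(L)$ and $\sum_i w_{\tau_i}(\zeta_i)$ by the same amount, and such moves connect any two $m_i$-strand braids.

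The second step, however, is left with a genuine gap, and the place where you went wrong is identifiable. Your first instinct was right: shifting $\tau_i$ by $k$ shifts the framing induced on the braid $\zeta_i$ by $m_i k$, \emph{linearly} in $m_i$, so by Lemma~\ref{lem:PtauL}(b) the class $P_\tau(L)$ changes by $2m_i k$. The framing of a link is a sum over its components, and a component of $\zeta_i$ that covers $\alpha_i$ with multiplicity $q$ traverses the twist $e^{ikt}$ exactly $q$ times, so its framing shifts by $qk$; summing over components gives $m_i k$. The writhe is quadratic in the number of strands under such a shift, as in \eqref{eqn:wTriv}, because it counts crossings between \emph{pairs} of strands, but there is no analogous pairwise count in the framing, so your ``correction'' to $2m_i^2k$ is wrong. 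With the linear shift the bookkeeping closes exactly: per orbit and per unit shift the three terms of \eqref{eqn:DAG} change by $2m_i$, $-m_i(m_i-1)$ (from \eqref{eqn:wTriv}), and $-(m_i^2+m_i)$ (from \eqref{eqn:muTriv}), and $2m_i + m_i(m_i-1) - (m_i^2+m_i) = 0$. Since you instead obtained a nonzero total, flagged the framing dependence as ``the crux,'' and offered only an expectation of how to repair it, the trivialization-independence half of the lemma is not actually proved in your proposal; the fix is the single sentence above, which is precisely what the paper's proof uses.
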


\begin{proof}
First fix the trivialization choices and consider replacing the braids
$\zeta_i$ by some other braids $\zeta_i'$.  Then by
Lemma~\ref{lem:PtauL}(c), we have
\[
P_\tau(L) - P_\tau(L') = \sum_i\left(w_{\tau_i}(\zeta_i) -
  w_{\tau_i}(\zeta_i')\right).
\]
Thus for given trivializations, $I(\alpha)$ does not depend on the
choice of braids.

Now fix the braids and consider a different set of trivialization choices
$\tau'=\{\tau_i'\}$.  Changing the trivialization over $\alpha_i$ from
$\tau_i'$ to $\tau_i$ shifts the induced framing on $\zeta_i$ by
$m_i(\tau_i-\tau_i')$.  Thus by Lemma~\ref{lem:PtauL},
\[
P_\tau(L) - P_{\tau'}(L) = \sum_i2m_i(\tau_i-\tau_i').
\]
Combining this with equations \eqref{eqn:wTriv} and \eqref{eqn:muTriv}
proves that $I(\alpha)$ does not depend on the trivialization choices.
\end{proof}

We now want to prove that $I(\alpha)$ satisfies properties (a), (b),
and (c) in Theorem~\ref{thm:AG}.  Property (a) is clear from the proof
of Lemma~\ref{lem:IWD}.  Property (b) is immediate from
Lemma~\ref{lem:PtauL}(a).

\subsection{Computing $Q$ using embedded surfaces in $Y$}
\label{sec:Qweta}

To prepare for the proof of Theorem~\ref{thm:AG}(c), we now establish
a general formula for the relative intersection pairing $Q$ in terms
of embedded surfaces in $Y$.  We use the notation from \S\ref{sec:Q}.

\begin{definition}
An admissible representative $S$ of a class $Z\in H_2(Y,\alpha,\beta)$
is {\bf nice\/} if the projection of $S$ to $Y$ is an immersion, and
the projection of the interior $\dot{S}$ to $Y$ is an embedding which
does not intersect the $\alpha_i$'s or $\beta_j$'s.
\end{definition}

\begin{lemma}
\label{lem:nice}
If none of the $\alpha_i$'s equals any of the $\beta_j$'s, then
every class $Z\in H_2(Y,\alpha,\beta)$ has a nice representative.
\end{lemma}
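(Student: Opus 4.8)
\textit{Proof proposal.}

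\noindent\textbf{Strategy.} Since no $\alpha_i$ equals any $\beta_j$, the union $K\eqdef\bigcup_i\alpha_i\cup\bigcup_j\beta_j$ is an embedded link in $Y$. My plan is to construct a nice representative by hand, decomposing $[-1,1]\times Y$ according to a decomposition of $Y$ along tubular-neighborhood tori of $K$. Fix pairwise disjoint tubular neighborhoods $\nu\alpha_i$, $\nu\beta_j$ of the orbits, let $Y_0\eqdef Y\setminus\operatorname{int}(\nu K)$ (a compact $3$-manifold whose boundary is a disjoint union of tori $T_a$), and on each $T_a$ fix a longitude $\lambda$ (a pushoff of the corresponding orbit) and a meridian $\mu$. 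Over the solid tori $\nu\alpha_i$ I will use standard ``spiral ramp'' local models, and over $[-1+\epsilon,1-\epsilon]\times Y_0$ I will use the graph of a function over an embedded surface in $Y_0$; since the three regions of $Y$ are disjoint and none of the pieces meets $K$ on its interior, the assembled surface will automatically be nice.

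\noindent\textbf{The two ingredients.} First, cutting $Y$ along the $T_a$ splits $Z$ into a relative class $Z|_{Y_0}\in H_2(Y_0,\partial Y_0)$ whose boundary on the torus around $\alpha_i$ is $Z\cap T_a=m_i\lambda+k_i\mu$, and around $\beta_j$ is $-(n_j\lambda+l_j\mu)$, for integers $k_i,l_j$ depending only on the homology class $Z$ (the longitude coefficients are forced to be $m_i$, resp.\ $n_j$, by $\partial Z=\sum m_i\alpha_i-\sum n_j\beta_j$). By standard $3$-manifold topology --- represent the Lefschetz-dual class in $H^1(Y_0)$ by a smooth map $Y_0\to S^1$ and take the preimage of a regular value, or take any spanning surface and resolve its self-intersections --- the class $Z|_{Y_0}$ is represented by an \emph{embedded} oriented surface $\Sigma\subset Y_0$ with $\partial\Sigma$ an embedded multicurve on $\partial Y_0$ in the prescribed classes. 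Second, for each $\alpha_i$ I build a local model $\Delta_i^+\subset[1-\epsilon,1]\times\nu\alpha_i$ as a disjoint union of nested helicoid-type spiral ramps, whose boundary at level $1$ is the $m_i$-fold cover of $\{1\}\times\alpha_i$ and whose boundary at the torus is \emph{exactly} $\partial\Sigma\cap T_a$; here one uses that an embedded multicurve of class $m_i\lambda+k_i\mu$ is $d\eqdef\gcd(m_i,k_i)$ parallel copies of the connected $(m_i/d,k_i/d)$-curve, each of which bounds a connected $(m_i/d)$-fold spiral ramp in $[1-\epsilon,1]\times\nu\alpha_i$ whose projection to $Y$ is an immersion, embedded on the interior and disjoint there from $\alpha_i$. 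Build $\Delta_j^-$ near $\beta_j$ the same way at levels in $[-1,-1+\epsilon]$. Finally, push $\Sigma$ into $[-1+\epsilon,1-\epsilon]\times Y_0$ as a graph (level function $1-\epsilon$ near the $\alpha$-tori, $-1+\epsilon$ near the $\beta$-tori, interpolating in between), so that its projection to $Y$ is the embedding $\Sigma\hookrightarrow Y_0\subset Y$, and glue it to the $\Delta_i^+$ and $\Delta_j^-$ along the tori (rounding corners) to obtain a smooth surface $S\to[-1,1]\times Y$.

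\noindent\textbf{Verification and main obstacle.} It then remains to check that $S$ is an admissible representative of $Z$ --- $\partial S$ consists of the correct covers of the Reeb orbits, $S$ is transverse to $\{\pm1\}\times Y$ (only the spiral ends touch levels $\pm1$, and they do so transversally by construction), and $[\pi(S)]=Z$ because the integers $k_i,l_j$ were dictated by $Z$ and the three pieces glue precisely along the restrictions of $Z$ to $\nu K$ and to $Y_0$ --- and that $S$ is nice: $\pi\circ f$ is an immersion (it is on each piece, and can be arranged compatibly across the gluing tori), while $\pi(\dot S)$ is an embedding disjoint from $K$ since the three pieces project injectively on their interiors into the pairwise disjoint regions $\nu\alpha_i$, $\nu\beta_j$, $Y_0$, none of whose interiors meets $K$. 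I expect the main obstacle to be the homological bookkeeping of the first ingredient: verifying that $k_i(Z)$ and $Z|_{Y_0}$ are well defined and that an embedded surface in $Y_0$ with \emph{exactly} the required torus-boundary multicurves exists --- this is the step where the hypothesis $[\alpha]=[\beta]$ in $H_1(Y)$ and the Mayer--Vietoris/excision data of the decomposition $Y=Y_0\cup\nu K$ must be combined carefully (and it is where the hypothesis that no $\alpha_i$ equals a $\beta_j$ is used, so that $K$ is an embedded link and the neighborhoods can be taken disjoint). Constructing the spiral local model with a prescribed torus-boundary class, and smoothing $S$ at the gluing tori, are more routine but still require care to preserve the ``nice'' conditions.
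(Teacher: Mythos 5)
Your overall route is the same as the paper's: delete disjoint tubular neighborhoods $N$ of the orbits, represent the induced class in $H_2(Y\setminus N,\partial N)\cong H^1(Y\setminus N;\Z)=[Y\setminus N,S^1]$ by the preimage of a regular value of a map to $S^1$, put the boundary curves on the tori into standard position, fill in over the solid tori, and lift to $[-1,1]\times Y$; your spiral-ramp local model is just an explicit version of the paper's ``fill in $S_0$ over $N$,'' and it does have the required properties (immersed projection, interior embedded and disjoint from the core).

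The genuine gap is in the step where you pass from $\Sigma$ to the ramps. You assert that an embedded multicurve on $T_a$ in the class $m_i\lambda+k_i\mu$ consists of $d=\gcd(m_i,k_i)$ parallel copies of the primitive $(m_i/d,k_i/d)$-curve. That is false as stated: only the total homology class of $\partial\Sigma\cap T_a$ is prescribed, and the preimage of a regular value can perfectly well meet $T_a$ in additional null-homotopic circles, or in parallel essential curves some of which are oppositely oriented. For such a boundary your filling fails: a contractible circle on $T_a$ does not bound a spiral ramp limiting onto covers of the core, and an essential component with negative longitude degree would produce a negatively oriented cover of $\alpha_i$ at the top level, violating admissibility (which requires positively oriented covers of $\{1\}\times\alpha_i$ of total multiplicity $m_i$). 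So before filling in you must normalize $\Sigma$ near $\partial Y_0$: cap off innermost contractible components of $\partial\Sigma$ by disks in the torus pushed slightly into $Y_0$, cancel adjacent parallel components with opposite orientations by attaching annuli, and then straighten the remaining $d$ parallel primitive curves into torus braids winding positively around $\alpha_i$ (respectively negatively around $\beta_j$). This normalization is precisely the content of the paper's proof, and it, rather than the homological bookkeeping you single out as the main obstacle, is the real work: the bookkeeping is immediate from the boundary map $H_2(Y_0,\partial Y_0)\to H_1(\partial Y_0)$. Finally, the hypothesis that no $\alpha_i$ equals any $\beta_j$ is used not merely to get disjoint neighborhoods, but (as your parenthetical about longitude coefficients implicitly shows, and as the paper states) to force the braid around each $\alpha_i$ to have exactly $m_i$ positively wound strands rather than only the difference $m_i-n_j$ that would appear at a shared orbit.
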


\begin{proof}
Let $N$ be the union of disjoint tubular neighborhoods of
the $\alpha_i$'s and $\beta_j$'s.  Then $Z$ determines a relative
homology class in
\[
H_2(Y\setminus N, \partial N) = H^1(Y\setminus N;\Z)=[Y\setminus N, S^1].
\]
The latter can be represented by an embedded oriented surface
$S_0\subset Y\setminus N$ transverse to $\partial N$. On each
component of $\partial N$, one can successively cap off contractible
circles in $S_0\cap \partial N$, then cancel adjacent parallel arcs
with opposite orientations, then straighten the remaining arcs, to
arrange that $S_0\cap \partial N$ is a union of torus braids around
each $\alpha_i$ with $m_i$ strands intersecting $\xi$ positively, and
around each $\beta_j$ with $n_j$ strands intersecting $\xi$
negatively.  (These braids have the correct number of strands
because of our assumption that none of the $\alpha_i$'s equals any of
the $\beta_j$'s.)  We can now fill
in $S_0$ over $N$ and lift it to $\R\times Y$ to obtain the desired
nice representative.
\end{proof}

If $S$ is a nice representative of $Z$ with associated braids
$\zeta_i^+$ and $\zeta_j^-$, then it makes sense to define the winding number
\[
\eta_\tau(S) \eqdef \sum_i \eta_{\tau_i^+}(\zeta_i^+) - \sum_j
\eta_{\tau_j^-}(\zeta_j^-).
\]
Also, if $S$ is any admissible representative of $Z$, 
define the writhe
\begin{equation}
\label{eqn:wtauS}
w_\tau(S) \eqdef \sum_i w_{\tau_i^+}(\zeta_i^+) - \sum_j
w_{\tau_j^-}(\zeta_j^-).
\end{equation}

\begin{lemma}
\label{lem:Qweta}
Suppose that $S$ is a nice representative of $Z$.  Then
\[
Q_\tau(Z) = -w_\tau(S) - \eta_\tau(S).
\]
\end{lemma}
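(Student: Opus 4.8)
The plan is to compute $Q_\tau(Z)$ directly from Definition~\ref{def:Q} by constructing a convenient admissible representative from the given nice representative $S$ and then identifying the resulting intersection and linking terms with $-w_\tau(S)-\eta_\tau(S)$. First I would take the nice representative $S$, whose projection to $Y$ is an immersion that embeds the interior $\dot S$ and avoids the Reeb orbits $\alpha_i$ and $\beta_j$, and use it to build two admissible representatives $S_1$ and $S_2$ of $Z$ that are suitable perturbations of $S$ in the $[-1,1]$ direction: since $S$ lifts essentially ``horizontally,'' I would push $S$ slightly in the $\pm$ directions in $[-1,1]\times Y$ so that $\dot S_1$ and $\dot S_2$ are transverse and disjoint near the boundary, exactly as required by the definition of $Q_\tau(Z,Z)$.

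Next I would analyze the two pieces of $Q_\tau(Z)=\#(\dot S_1\cap\dot S_2)-\ell_\tau(S_1,S_2)$ separately. For the interior intersection number: because the projection of $\dot S$ to $Y$ is an \emph{embedding}, two generic $[-1,1]$-pushoffs of $\dot S$ have no interior intersections coming from the projection being injective — so $\#(\dot S_1\cap\dot S_2)=0$. This is where niceness does the essential work. For the linking term near the boundary: near $\{1\}\times\alpha_i$ the surface $S$ contributes the braid $\zeta_i^+$, and the two pushoffs $S_1,S_2$ produce two parallel copies of $\zeta_i^+$; the linking number $\ell_\tau(\zeta_i^+,(\zeta_i^+)')$ of a braid with a parallel pushoff of itself equals its writhe $w_{\tau_i^+}(\zeta_i^+)$ plus its winding number $\eta_{\tau_i^+}(\zeta_i^+)$ around $\alpha_i$ — the writhe accounting for the self-crossings of the braid and the winding number accounting for how the parallel pushoff (which sits in the annulus between $\zeta_i^+$ and $\alpha_i$, or just outside) links the core. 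I expect to need to be careful about \emph{which} pushoff is used — the parallel copy must be chosen compatibly with the framing $\tau$ so that the pushoff has winding number zero relative to $\tau$, matching the conventions in Definition~\ref{def:ctau} and the braid definitions; the writhe/linking identity \eqref{eqn:writheUnion} and the definition $\eta_\tau(\zeta)=\ell_\tau(\zeta,\gamma)$ should then assemble the claim. Summing over the positive ends $\alpha_i$ and subtracting the analogous contributions from the negative ends $\beta_j$ (with the sign flip built into the definition of $\ell_\tau(S,S')$) gives $\ell_\tau(S_1,S_2)=w_\tau(S)+\eta_\tau(S)$, hence $Q_\tau(Z)=0-\bigl(w_\tau(S)+\eta_\tau(S)\bigr)$ as desired.

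The main obstacle I anticipate is the boundary bookkeeping: getting the signs and the choice of parallel pushoff exactly right so that the self-linking of each boundary braid decomposes cleanly as writhe plus winding number, and confirming that the pushoff has the correct (zero) winding number with respect to $\tau$ so that no extra framing-twist terms appear. This requires tracking the identification $\phi_\tau$ of the tubular neighborhood with a solid torus in $\R^3$ and verifying that pushing off in the $[-1,1]$ direction corresponds to a pushoff in $\R^3$ that is $\tau$-parallel. I would also double-check the cancellation $\#(\dot S_1\cap\dot S_2)=0$ does not secretly hide boundary-adjacent intersections — but the hypothesis that nice representatives have their interior projecting to an embedding disjoint from the orbits, combined with choosing the pushoffs to agree with $S$ outside a small neighborhood of the boundary, should rule this out. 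Once these local computations are pinned down, the global identity follows by summation, and \eqref{eqn:QAmb}, \eqref{eqn:wTriv}, \eqref{eqn:ltriv} can be invoked if any well-definedness check is needed.
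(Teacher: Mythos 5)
Your overall route coincides with the paper's: produce a second admissible representative by shifting $S$ in the $[-1,1]$ direction, use the hypothesis that the interior of a nice representative projects to an embedding to get $\#(\dot{S}_1\cap\dot{S}_2)=0$, and evaluate the linking term end by end via the identity $\ell_\tau(\zeta,\zeta')=w_\tau(\zeta)+\eta_\tau(\zeta)$ for each boundary braid and its nearby copy, with the negative-end signs already built into $\ell_\tau(S,S')$, $w_\tau(S)$, $\eta_\tau(S)$. That skeleton, and the final bookkeeping, are right.

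The one step in your plan that would fail as written is the proposed verification that ``pushing off in the $[-1,1]$ direction corresponds to a pushoff in $\R^3$ that is $\tau$-parallel,'' i.e.\ that the parallel copy has winding number zero relative to $\tau$. It does not, and it must not: for the $\tau$-parallel (zero-winding) pushoff one has $\ell_\tau(\zeta,\zeta')=w_\tau(\zeta)$ with no $\eta$ term, which would give $Q_\tau(Z)=-w_\tau(S)$ and lose exactly the $-\eta_\tau(S)$ you need. The copy produced by the $s$-shift is instead (isotopic to) a \emph{radial} pushoff of $\zeta_i^+$ relative to the core orbit $\alpha_i$: an admissible representative has boundary on $\{1\}\times\alpha_i$ and is transverse to $\{1\}\times Y$, so its slices converge to $\alpha_i$ as $s\to 1$, and hence the braid of the shifted surface at level $1-\epsilon$ is the braid of $S$ at a nearby level, a radial push of $\zeta_i^+$ toward (or away from -- by symmetry of linking the count is the same) $\alpha_i$. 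For such a radial pushoff the crossings with $\zeta_i^+$ come either from self-crossings of $\zeta_i^+$ (contributing $w_\tau$) or from points where $\zeta_i^+$ crosses the core in the projection (contributing $\eta_\tau=\ell_\tau(\zeta_i^+,\alpha_i)$); this is precisely the content of your own parenthetical about the pushoff sitting in the annulus between $\zeta_i^+$ and $\alpha_i$, and it is what the paper uses. So replace the $\tau$-parallel check by this radial-convergence observation. One smaller point: in your last ``double-check'' you suggest letting the pushoffs agree with $S$ away from the boundary, but Definition~\ref{def:Q} requires the two interiors to be transverse everywhere, so the shift should be strict on the whole interior (e.g.\ compose $S$ with $(s,y)\mapsto(\varphi(s),y)$ where $\varphi(s)>s$ for $|s|<1$), which makes the interiors literally disjoint and settles both the transversality and the boundary-adjacent intersection worry at once.
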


\begin{proof}
  Choose a smooth function $\varphi:[-1,1]\to[-1,1]$ such that
  $\varphi(s)\ge s$, with equality only for $s\in\{\pm1\}$. Make
  another admissible representative $S'$ of $Z$ by composing $S$ with
  the diffeomorphism from $[-1,1]\times Y$ to itself that sends
  $(s,y)\mapsto (\varphi(s),y)$.  The corresponding braid
  ${\zeta_i^+}'$ is obtained by pushing $\zeta_i^+$ radially towards
  $\alpha_i$, so their linking number is given by
\[
\ell_{\tau_i^+}(\zeta_i^+,{\zeta_i^+}') = w_{\tau_i^+}(\zeta_i^+) +
\eta_{\tau_i^+}(\zeta_i^+).
\]
Combining this with an analogus formula for the negative braids, we
obtain
\[
\ell_\tau(S,S') = w_\tau(S) + \eta_\tau(S).
\]
On the other hand, since the projection of $S$ to $Y$ is an embedding
on the interior, it follows that $S$ does not intersect $S'$ on the
interior, so
\[
\#(\dot{S}\cap \dot{S}')=0.
\]
The lemma now follows from the definition of $Q$.
\end{proof}

If $\alpha$ and $\beta$ have some Reeb orbits in common, then a nice
representative of $Z$ might not exist, but the above formula for
$Q_\tau(Z)$ can be extended to this case as follows.

To start, it follows from the definition that $Q_\tau$ is quadratic in
the following sense: if $Z\in H_2(Y,\alpha,\beta)$ and $Z'\in
H_2(Y,\alpha',\beta')$, then
\[
Z+Z'\in
H_2(Y,\alpha\alpha',\beta\beta')
\]
is defined (here the product of two
orbit sets is defined by adding the multiplicities of all Reeb orbits
involved), and
\begin{equation}
\label{eqn:quadratic}
Q_\tau(Z+Z') = Q_\tau(Z) + 2Q_\tau(Z,Z') + Q_\tau(Z').
\end{equation}
Here $\tau$ is a trivialization of $\xi$ over all Reeb orbits under
consideration.

Now let $\widehat{\alpha}$ and $\widehat{\beta}$ be obtained from
$\alpha$ and $\beta$ by ``dividing by their greatest common factor''
according to the following procedure: Whenever $\alpha_i=\beta_j$,
replace $m_i$ by $\widehat{m}_i\eqdef m_i - \op{min}(m_i,n_j)$ and
replace $n_j$ by $\widehat{n}_j \eqdef n_j - \op{min}(m_i,n_j)$; then
discard all pairs $(\alpha_i,\widehat{m}_i)$ with $\widehat{m}_i=0$
and $(\beta_j,\widehat{n}_j)$ with $\widehat{n}_j=0$.  Now
$\widehat{\alpha}$ and $\widehat{\beta}$ have no Reeb orbits in
common.  Let $\gamma$ denote the ``greatest common factor'' of
$\alpha$ and $\beta$, namely
\[
\gamma\eqdef \{(\alpha_i,m_i-\widehat{m}_i)\mid
m_i>\widehat{m}_i\}=\{(\beta_j,n_j-\widehat{n}_j)\mid
n_j>\widehat{n}_j\},
\]
so that $\alpha=\widehat{\alpha}\gamma$ and
$\beta=\widehat{\beta}\gamma$.

Any class $Z\in H_2(Y,\alpha,\beta)$ can be uniquely writen as
$Z=Z_0 + \widehat{Z}$, where $Z_0\in H_2(Y,\gamma,\gamma)$ corresponds
to $0$ under the obvious identification $H_2(Y,\gamma,\gamma)=H_2(Y)$,
and $\widehat{Z}\in H_2(Y,\widehat{\alpha},\widehat{\beta})$.  And
$\widehat{Z}$ has a nice representative by Lemma~\ref{lem:nice}.

\begin{lemma}
\label{lem:Qwetahat}
Given $Z\in H_2(Y,\alpha,\beta)$, let $\widehat{S}$ be a nice
representative of the corresponding class $\widehat{Z}\in
H_2(Y,\widehat{\alpha},\widehat{\beta})$, with associated braids
$\widehat{\zeta}_i^+$ and $\widehat{\zeta}_j^-$.  Choose the
trivializations so that $\tau_i^+=\tau_j^-$ whenever
$\alpha_i=\beta_j$.  Define
\[
\ell_\tau(\widehat{S},\R\times\gamma) \eqdef
\sum_i(m_i-\widehat{m_i})\eta_{\tau_i^+}(\widehat{\zeta}_i^+) -
\sum_j(n_j-\widehat{n_j})\eta_{\tau_j^-}(\widehat{\zeta}_j^-).
\]
Then
\[
Q_\tau(Z) = -w_\tau(\widehat{S}) - \eta_\tau(\widehat{S})
-2\ell_\tau(\widehat{S},\R\times\gamma).
\]
\end{lemma}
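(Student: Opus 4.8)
The plan is to reduce the general case to the already-established Lemma~\ref{lem:Qweta} by using the quadratic decomposition $Z = Z_0 + \widehat Z$ together with the biadditivity \eqref{eqn:quadratic}. Since $Z_0 \in H_2(Y,\gamma,\gamma)$ corresponds to $0\in H_2(Y)$, we have $Q_\tau(Z_0) = 0$ (the zero class has a representative disjoint from everything, with trivial braids). Expanding \eqref{eqn:quadratic} then gives
\[
Q_\tau(Z) = Q_\tau(\widehat Z) + 2Q_\tau(Z_0,\widehat Z).
\]
By Lemma~\ref{lem:Qweta} applied to the nice representative $\widehat S$ of $\widehat Z$, the first term is $-w_\tau(\widehat S) - \eta_\tau(\widehat S)$. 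So the entire content of the lemma is the identification of the cross term:
\[
Q_\tau(Z_0,\widehat Z) = -\ell_\tau(\widehat S, \R\times\gamma).
\]

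First I would choose an explicit admissible representative $S_0$ of $Z_0$: since $Z_0$ maps to $0\in H_2(Y)$, take $S_0$ to consist, near each $\alpha_i = \beta_j$ appearing in $\gamma$, of a trivial (unknotted, unlinked, radially symmetric) braid with $m_i - \widehat m_i = n_j - \widehat n_j$ strands intersecting $\xi$ positively at $\{1\}\times\alpha_i$ and the same number intersecting negatively at $\{-1\}\times\beta_j$, connected through the cylinder in the most trivial way (a union of ``vertical'' strips), and lying in a tubular neighborhood $\R\times N$ of $\R\times\gamma$. Pushing $S_0$ slightly in the radial direction toward $\gamma$ makes it disjoint from $\widehat S$ in the interior (since $\widehat S$'s interior projects to an embedding in $Y$ disjoint from the $\alpha_i$'s, while $S_0$ can be taken arbitrarily close to $\R\times\gamma$), so $\#(\dot S_0 \cap \dot{\widehat S}) = 0$. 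Then by the definition of $Q_\tau(Z_0,\widehat Z)$,
\[
Q_\tau(Z_0,\widehat Z) = -\ell_\tau(S_0,\widehat S).
\]

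Next I would compute $\ell_\tau(S_0,\widehat S)$ directly from the definition of the linking number of admissible representatives. At the positive end, the braid of $S_0$ around $\alpha_i$ is the trivial $(m_i-\widehat m_i)$-strand braid, call it $\zeta_i^0$, and the braid of $\widehat S$ is $\widehat\zeta_i^+$; the relevant contribution is $\ell_{\tau_i^+}(\zeta_i^0, \widehat\zeta_i^+)$. Because $\zeta_i^0$ is a trivial braid running parallel to $\alpha_i$ with $m_i - \widehat m_i$ strands, its linking number with $\widehat\zeta_i^+$ equals $(m_i - \widehat m_i)\,\ell_{\tau_i^+}(\alpha_i, \widehat\zeta_i^+) = (m_i-\widehat m_i)\,\eta_{\tau_i^+}(\widehat\zeta_i^+)$, using the definition of the winding number; here it is important that the trivializations are chosen with $\tau_i^+ = \tau_j^-$ when $\alpha_i = \beta_j$, so that the trivial braid $\zeta_i^0$ really is ``parallel'' with zero internal twisting. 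Similarly the negative end contributes $-(n_j - \widehat n_j)\,\eta_{\tau_j^-}(\widehat\zeta_j^-)$. Summing gives exactly $\ell_\tau(S_0,\widehat S) = \ell_\tau(\widehat S, \R\times\gamma)$ as defined in the statement, and combining the three displayed equations completes the proof.

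**Main obstacle.** The computational heart, and the step I expect to require the most care, is the identification $\ell_\tau(\zeta_i^0,\widehat\zeta_i^+) = (m_i - \widehat m_i)\,\eta_{\tau_i^+}(\widehat\zeta_i^+)$, i.e.\ verifying that a trivial $k$-strand braid links $\widehat\zeta_i^+$ exactly $k$ times the way the core $\alpha_i$ does. This needs a clean choice of the ``trivial'' representative $S_0$ so that all its internal linking and writhe vanish with respect to $\tau_i^+$, and a check that the radial pushoff used to achieve interior-disjointness from $\widehat S$ does not alter the linking number (it does not, since linking number is an isotopy invariant of disjoint links and the pushoff can be kept disjoint from $\widehat\zeta_i^+$ throughout). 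Everything else is bookkeeping with the definitions of $Q_\tau$, $\ell_\tau(S,S')$, and $\eta_\tau$ already set up in \S\ref{sec:Q} and \S\ref{sec:Qweta}.
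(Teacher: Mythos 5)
Your proposal is correct and follows essentially the same route as the paper: decompose $Z = Z_0 + \widehat{Z}$, use the quadraticity \eqref{eqn:quadratic} together with $Q_\tau(Z_0)=0$ and Lemma~\ref{lem:Qweta}, and identify the cross term $Q_\tau(Z_0,\widehat{Z})=-\ell_\tau(\widehat{S},\R\times\gamma)$ by choosing a representative $S_0$ of $Z_0$ supported near $\R\times\gamma$, with interior disjoint from $\dot{\widehat{S}}$ and braids disjoint from the $\widehat{\zeta}$'s. The only slight imprecision is where you locate the use of the hypothesis $\tau_i^+=\tau_j^-$: it is really needed to make $Q_\tau(Z_0)=0$ (the cross-term computation works end by end with each end's own trivialization), but this does not affect the validity of the argument.
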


\begin{proof}
  It follows easily from the definition of $Q$, and our assumption
  that $\tau_i^+=\tau_j^-$ whenever $\alpha_i=\beta_j$, that
  $Q_\tau(Z_0)=0$.  So by equation \eqref{eqn:quadratic} and
  Lemma~\ref{lem:Qweta}, it is enough to show that
\begin{equation}
\label{eqn:QZZ0}
Q_\tau(\widehat{Z},Z_0) =
- \ell_\tau(\widehat{S},\R\times\gamma).
\end{equation}

We can find an admissible representative $S_0$ of the class $Z_0$
which is contained in a union of disjoint tubular neighborhoods of the
cylinders $\R\times\alpha_i$ for those Reeb orbits $\alpha_i$ that
equal some $\beta_j$.  Since the interior of $\widehat{S}$ does not
intersect the $\alpha_i$'s or $\beta_j$'s, we can arrange for the
interior of $S_0$ to be disjoint from the interior of $\widehat{S}$,
so that
\[
\#(\dot{\widehat{S}},\dot{S}_0)=0.
\]
At the same time we can arrange that the braids associated to $S_0$
are contained in tubular neighborhoods of the $\alpha_i$'s and
$\beta_j$'s that do not intersect the braids $\widehat{\zeta}_i^+$ and
$\widehat{\zeta}_j^-$, which implies that
\[
\ell_\tau(\widehat{S},S_0) =
\ell_\tau(\widehat{S},\R\times\gamma).
\]
Putting the above two equations into the definition of $Q$ proves
\eqref{eqn:QZZ0}.
\end{proof}

\subsection{The absolute grading determines the relative}
\label{sec:IRAP}

\begin{proof}[Proof of Theorem~\ref{thm:AG}(c)]
Pick an arbitrary class $Z\in H_2(Y,\alpha,\beta)$.  Choose
trivializations $\tau_+=\{\tau_i^+\}$ of $\xi$ over the $\alpha_i$'s
and trivializations $\tau_-=\{\tau_j^-\}$ of $\xi$ over the
$\beta_j$'s.  By the definitions of the relative and absolute versions
of $I$, to prove the desired identity \eqref{eqn:IRA}, we need to show
that
\begin{equation}
\label{eqn:IRAR}
P_{\tau_+}(L_+) - P_{\tau_-}(L_-) - \sum_iw_{\tau_i^+}(\zeta_i^+) +
\sum_jw_{\tau_j^-}(\zeta_j^-)
 \equiv c_\tau(Z) +
Q_\tau(Z)
\end{equation}
modulo $d(c_1(\xi) + 2\op{PD}([\Gamma]))$, where $\zeta_i^+$ is some
braid around $\alpha_i$ with $m_i$ strands, and $\zeta_j^-$ is some
braid around $\beta_j$ with $n_j$ strands, and $L_+\eqdef
\sqcup_i\zeta_i^+$ and $L_j\eqdef \sqcup_j\zeta_j^-$.

We will prove \eqref{eqn:IRAR} for special braids $\zeta_i^+$ and
$\zeta_j^-$ chosen as follows.  First define $\widehat{\alpha}$ and
$\widehat{\beta}$ by ``dividing $\alpha$ and $\beta$ by their greatest
common factor'' as explained in \S\ref{sec:Qweta}.  By
Lemma~\ref{lem:nice}, we can find a nice representative $\widehat{S}$
of the class $\widehat{Z}\in H_2(Y,\widehat{\alpha},\widehat{\beta})$
determined by $Z$.  Take the projection of $\widehat{S}$ to $Y$, and
remove its intersection with a union of small disjoint tubular
neighborhoods of the $\alpha_i$'s and $\beta_j$'s, to obtain an
embedded compact oriented surface $S$ in $Y$, whose boundary is a
transverse link.  More precisely,
\[
\partial S = \bigsqcup_i \widehat{\zeta}_i^+ \sqcup \bigsqcup_j -
\widehat{\zeta}_j^-,
\]
where $\widehat{\zeta}_i^+$ is a braid around $\alpha_i$ with
$\widehat{m}_i$ strands which does not intersect $\alpha_i$, and
$\widehat{\zeta}_j^-$ is a braid around $\beta_j$ with $\widehat{n}_j$
strands which does not intersect $\beta_j$.

Now define $\zeta_i^+$ and $\zeta_j^-$ as follows.  If $\alpha_i$ does
not equal any $\beta_j$, take $\zeta_i^+\eqdef \widehat{\zeta}_i^+$;
likewise if $\beta_j$ does not equal any $\alpha_i$, take $\zeta_j^-
\eqdef \widehat{\zeta}_j^-$.  If $\alpha_i=\beta_j$, choose an
arbitrary braid $\zeta_{ij}$ around this Reeb orbit with
$m_i-\widehat{m}_i=n_j-\widehat{n}_j$ strands, such that $\zeta_{ij}$ is
contained in a small tubular neighborhood of $\alpha_i=\beta_j$ which
does not intersect $\widehat{\zeta}_i^+$ or $\widehat{\zeta}_j^-$;
then take $\zeta_i^+ \eqdef \widehat{\zeta}_i^+\sqcup\zeta_{ij}$ and
$\zeta_j^-\eqdef \widehat{\zeta}_j^-\sqcup\zeta_{ij}$.

We now prove \eqref{eqn:IRAR} for these choices.  By equation
\eqref{eqn:writheUnion}, if $\alpha_i=\beta_j$, then
\[
\begin{split}
w_{\tau_i^+}(\zeta_i^+) &= w_{\tau_i^+}(\widehat{\zeta}_i^+) +
w_{\tau_i^+}(\zeta_{i,j}) + 2(m_i -
  \widehat{m}_i)\eta_{\tau_i^+}(\widehat{\zeta}_i^+),\\
w_{\tau_j^-}(\zeta_j^-) &= w_{\tau_j^-}(\widehat{\zeta}_j^-) +
w_{\tau_j^-}(\zeta_{i,j}) + 2(n_j -
  \widehat{n}_j)\eta_{\tau_j^-}(\widehat{\zeta}_j^-).
\end{split}
\]
Thus if we choose the trivializations so that $\tau_i^+=\tau_j^-$
whenever $\alpha_i=\beta_j$, then in the notation of Lemma~\ref{lem:Qwetahat},
\[
\sum_i w_{\tau_i^+}(\zeta_i^+) - \sum_j w_{\tau_j^-}(\zeta_j^-) =
w_\tau(\widehat{S}) + 2\ell_\tau(\widehat{S},\R\times\gamma).
\]
Thus by Lemma~\ref{lem:Qwetahat}, our goal \eqref{eqn:IRAR} is
equivalent to
\begin{equation}
\label{eqn:IRAE}
P_{\tau_+}(L_+) - P_{\tau_-}(L_-) \equiv c_\tau(Z) - \eta_\tau(\widehat{S}).
\end{equation}

To prove \eqref{eqn:IRAE}, let $\tau^\nu$ denote the framing of
$L_\pm$ induced by the conormal direction to $S$, together with some
fixed framings of the braids $\zeta_{i,j}$.  Then by
Lemma~\ref{lem:PtauL}(d),
\[
P_{\tau^\nu}(L_+) - P_{\tau^\nu}(L_-) \equiv
c_1(\xi|_{S},\tau^\nu).
\]
Now on each component $C$ of the braid $\widehat{\zeta}_i^+$, the
conormal framing $\tau^\nu$ differs from the framing
induced by $\tau_i^+$ by
the winding number $\eta_{\tau_i^+}(C)$.  Likewise for the braids
$\widehat{\zeta}_j^-$.  This framing difference has two consequences.  First,
\[
c_1(\xi|_{S},\tau^\nu)  = c_\tau(Z) + \eta_\tau(\widehat{S}).
\]
Second, using Lemma~\ref{lem:PtauL}(b),
\[
(P_{\tau^\nu}(L_+) - P_{\tau^\nu}(L_-)) - (P_{\tau_+}(L_+) -
P_{\tau_-}(L_-)) \equiv 2\eta_\tau(\widehat{S}).
\]
Combining the above three equations proves \eqref{eqn:IRAE}.
\end{proof}

\section{The index inequality in cobordisms}
\label{sec:IICob}

The main result of this section is Theorem~\ref{thm:indI} below, which
generalizes the basic ECH index inequality \eqref{eqn:II} to
symplectic cobordisms.

\subsection{Cobordism setup}

Let $(Y_+,\lambda_+,\omega_+)$ and $(Y_-,\lambda_-,\omega_-)$ be
closed oriented 3-manifolds with stable Hamiltonian structures.  Write
$E_+\eqdef [0,\infty)\times Y_+$ and $E_-\eqdef (-\infty,0]\times
Y_-$.  Let $s$ denote the $[0,\infty)$ or
$(-\infty,0]$ coordinate on $E_\pm$.

\begin{definition}
  A {\bf symplectic cobordism\/} from $Y_+$ to $Y_-$ is a smooth
  4-manifold $X$ with a decomposition
\[
X = E_- \cup_{Y_-} \overline{X} \cup_{Y_+}
E_+,
\]
where $(\overline{X},\omega)$ is a compact symplectic 4-manifold such
that $\partial \overline{X} = -Y_- \sqcup Y_+$ and $\omega|_{Y_\pm} =
\omega_\pm$.  In the special case
$(Y_\pm,\lambda_\pm,\omega_\pm)=(Y,\lambda,\omega)$, we allow
$\overline{X}=\emptyset$, in which case $X=\R\times Y$ is called the
{\bf symplectization\/} of $Y$.
\end{definition}

We will not really use the symplectic form on $\overline{X}$ in the
present paper, but it enables compactness results for holomorphic
curves \cite{behwz,cm}.

\begin{definition}
Let $X$ be a symplectic cobordism from $(Y_+,\lambda_+,\omega_+)$ to
$(Y_-,\lambda_-,\omega_-)$.  An almost complex structure $J$ on $X$ is
{\bf admissible\/} if:
\begin{itemize}
\item On $E_\pm$, the almost complex structure $J$ is independent of
  $s$, sends $\partial_s$ to $R_\pm$, and sends $\xi_\pm$ to itself
  compatibly with $\omega_\pm$.
\item
On $\overline{X}$, the almost complex structure $J$ is tamed by
$\omega$.
\end{itemize}
\end{definition}

\subsection{The ECH index in cobordisms}
\label{sec:ECHIC}

Fix a symplectic cobordism as above.  Suppose
$\alpha^+=\{(\alpha_i^+,m_i^+)\}$ is an orbit set in $Y_+$, and
$\alpha^-=\{(\alpha_j^-,m_j^-)\}$ is an orbit set in $Y_-$, such that
$[\alpha^+]\in H_1(Y_+)$ and $[\alpha^-]\in H_1(Y_-)$ map to the same
homology class in $H_1(X)$.  Let $H_2(X,\alpha^+,\alpha^-)$ denote the
set of relative homology classes of 2-chains $Z$ in the $4$-manifold
$X$ with
\[
\partial Z = \sum_i m_i^+ \{1\}\times\alpha_i^+ - \sum_j m_j^-
\{-1\}\times\alpha_j^-.
\]
This is an affine space over $H_2(X)$.  Note that in the special case when
 $X=\R\times Y$, this is canonically isomorphic to the affine
space $H_2(Y,\alpha^+,\alpha^-)$ from Definition~\ref{def:RHC}, via
the projection $\R\times Y\to Y$.

Returning to the general case, let $Z\in H_2(X,\alpha^+,\alpha^-)$.
If $\tau$ is a homotopy class of trivialization of $\xi_+$ over the
Reeb orbits $\alpha_i^+$ and of $\xi_-$ over the Reeb orbits
$\alpha_j^-$, define the relative first Chern class
\[
c_\tau(Z) \eqdef c_1(TX|_Z,\tau)\in\Z,
\]
generalizing Definition~\ref{def:ctau}, as follows.  Regard $TX$ as a
complex vector bundle via any admissible almost complex structure.
Fix a trivialization
\[
TX|_{\{1\}\times\alpha_i^+} \stackrel{\simeq}{\longrightarrow}
\alpha_i^+\times(\C\oplus\C)
\]
sending $\xi_+$ to the first summand via $\tau$ and sending
$\partial_s$ and $R_+$ to $1$ and $\sqrt{-1}$ respectively in the
second summand.  Choose an analogous trivialization of $TX$ over
$\{-1\}\times\alpha_j^-$.  Represent $Z$ by a smooth map $f:S\to X$
where $S$ is a compact surface with boundary.  Choose a generic
section $\psi$ of $f^*(\wedge^2 TX)$ which on $\partial S$ is
nonvanishing and has winding number zero with respect to the above
trivialization. Then define $c_1(TX|_Z,\tau)\eqdef \#\psi^{-1}(0)$.

If $Z\in H_2(X,\alpha^+,\alpha^-)$ and $Z'\in
H_2(X,{\alpha^+}',{\alpha^-}')$, and if $\tau$ is a trivialization of
$\xi_\pm$ over all orbits in $\alpha^\pm$ and ${\alpha^\pm}'$, then
$
Q_\tau(Z,Z')\in\Z
$
is defined by obvious analogy with Definition~\ref{def:Q}.

\begin{definition}
If $Z\in H_2(X,\alpha^+,\alpha^-)$, define the ECH index
\[
I(Z) \eqdef c_\tau(Z) + Q_\tau(Z) + \mu_\tau(\alpha^+) -
\mu_\tau(\alpha^-).
\]
\end{definition}

\subsection{Holomorphic curves}

Fix a symplectic cobordism $X$ with an admissible almost complex
structure $J$.  Recall that a {\bf holomorphic curve\/} in $X$ is a map
\[
u:(C,j) \longrightarrow (X,J)
\]
where $(C,j)$ is a Riemann surface and $J\circ du = du \circ j$.
One declares that $u:(C,j)\to(X,J)$ is equivalent to
$u':(C',j')\to(X,J)$ if and only if there is a biholomorphic map
$\varphi:(C,j)\to(C',j')$ such that $u'\circ\varphi=u$.

In this paper we will always assume further that:
\begin{itemize}
\item
$(C,j)$ is a punctured compact Riemann surface, possibly
disconnected.
\item
$u$ is nonconstant on each component of $C$.
\item Each end of $u$ is either asymptotic to $[0,\infty)\times\gamma$
  for some Reeb orbit $\gamma$ in $Y_+$, or asymptotic to
  $(-\infty,0]\times\gamma$ for some Reeb orbit $\gamma$ in $Y_-$.
\end{itemize}

If $\gamma$ is an embedded Reeb orbit in $Y_+$ and $k$ is a positive
integer, a {\bf positive end\/} of $u$ at $\gamma$ of multiplicity $k$
is an end of $u$ which is asymptotic to $[0,\infty)\times\gamma^k$.
Recall here that $\gamma^k$ denotes the $k$-fold iterate of $\gamma$.
Likewise, if $\gamma$ is an embedded Reeb orbit in $Y_-$, a {\bf
  negative end\/} of $u$ at $\gamma$ of multiplicity $k$ is an end of
$u$ which is asymptotic to $(-\infty,0]\times\gamma^k$.

\begin{definition}
  A holomorphic curve $u:C\to X$ is {\bf multiply covered\/} if there
  is a subset $C'\subset C$ which is a union of components, a
  holomorphic branched cover $\varphi:C'\to C_0$ of degree $>1$, and a
  holomorphic map $u_0:C_0\to X$ such that $u|_{C'}=u_0\circ\varphi$.
  Otherwise $u$ is called {\bf simple\/}.  We say that $u$ is {\bf
    irreducible\/} if its domain $C$ is connected.
\end{definition}

Let $\alpha^+=\{(\alpha_i^+,m_i^+)\}$ and
$\alpha^-=\{(\alpha_j^-,m_j^-)\}$ be orbit sets in $Y_+$ and $Y_-$
with the same homology class in $X$.

\begin{definition}
Let $\mc{M}(\alpha^+,\alpha^-)$ denote the moduli space of holomorphic
curves $u$ in $X$ with:
\begin{itemize}
\item
positive ends at $\alpha_i^+$ with total
multiplicity $m_i^+$, for each $i$;
\item
negative ends at $\alpha_j^-$ with total
multiplicity $m_j^-$, for each $j$;
\end{itemize}
and no other ends.
\end{definition}

\noindent
Any such $u$ determines a relative homology class $[u]\in
H_2(X,\alpha^+,\alpha^-)$, after using orientation-preserving
diffeomorphisms $[0,\infty)\simeq [0,1)$ and $(-\infty,0]\simeq
(-1,0]$ to identify
\[
X \simeq ((-1,0]\times Y_-) \cup_{Y_-} \overline{X} \cup_{Y_+} (([0,1)\times
Y_+).
\]

\begin{definition} Given $Z\in H_2(X,\alpha^+,\alpha^-)$, let
\[
\mc{M}(\alpha^+,\alpha^-,Z) \eqdef \{u\in \mc{M}(\alpha^+,\alpha^-)
\mid [u]=Z\}.
\]
\end{definition}

\begin{notation}
We will often abuse notation and refer to the holomorphic curve
$u:(C,j)\to(X,J)$ simply by $C$.  If $\tau$ is a trivialization of
$\xi$ over the Reeb orbits $\alpha^+_i$ and $\alpha^-_j$, we write
$c_\tau(C) \eqdef c_\tau([C])$; $Q_\tau(C)\eqdef Q_\tau([C])$;
$\mu_\tau(C) \eqdef \mu_\tau(\alpha^+)-\mu_\tau(\alpha^-)$; and
\[
I(C)\eqdef I(\alpha^+,\alpha^-,[C]) = c_\tau(C)+Q_\tau(C)+\mu_\tau(C).
\]
\end{notation}

\begin{example}
If $C$ is closed, representing a homology class $[C]\in H_2(X)$, then
\[
I(C) = \langle c_1(TX),[C]\rangle + [C]\cdot [C].
\]
Taubes's Gromov invariant \cite{gr} of a closed symplectic 4-manifold $X$
counts (in a subtle way) holomorphic curves $C$ in $X$ with $I(C)=0$.
\end{example}

\subsection{The relative adjunction formula}

Consider now a {\em simple\/} $J$-holomorphic curve
$C\in\mc{M}(\alpha^+,\alpha^-,Z)$.  It follows from
\cite[Cor. 2.6]{siefring1} that $C$ is embedded except possibly for
finitely many singularities.  We then have the following relative
adjunction formula:

\begin{proposition}
If $C$ is a simple holomorphic curve in $X$ as above, then
\begin{equation}
\label{eqn:RAF}
c_\tau(C) = \chi(C) + Q_\tau(C) + w_\tau(C) - 2 \delta(C).
\end{equation}
\end{proposition}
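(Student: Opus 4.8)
The plan is to establish the relative adjunction formula \eqref{eqn:RAF} by comparing two different computations of the relative first Chern class $c_\tau(C) = c_1(TX|_C,\tau)$. The key observation is that along $C$, the complex line bundle $\wedge^2 TX|_C$ splits (non-holomorphically, but compatibly with trivializations away from the singularities) as $TC \otimes N_C$, where $N_C$ is the normal bundle of $C$. Thus $c_1(TX|_C,\tau) = c_1(TC,\tau) + c_1(N_C,\tau)$, where the relative Chern numbers on the right are defined using the trivialization $\tau$ of $\xi$ (hence of $TX$) near the ends, together with the canonical identification of $TC$ and $N_C$ near the punctures coming from the asymptotic behavior.

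First I would treat the tangential term. Since $C$ is a punctured Riemann surface asymptotic to cylinders over Reeb orbits, $c_1(TC,\tau)$ should equal $\chi(C)$ up to a correction coming from how the trivialization $\tau$ of $\xi$ compares with the trivialization of $TC$ near each end. Near a puncture, $TC$ is spanned by $\partial_s$ and the Reeb direction, so in the trivialization of $TX$ adapted to $\tau$ (sending $\partial_s, R$ to the second $\C$ summand), $TC$ near the end is naturally trivialized with winding number zero, and one gets simply $c_1(TC,\tau) = \chi(C)$. This is the standard relative Euler characteristic computation.

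Next I would handle the normal term. Here the point is that $N_C$, restricted to a small loop around $C$ near a puncture over $\gamma^k$, is related to the braid $\zeta$ that $C$ cuts out in a neighborhood of $\gamma$: the relative Chern number $c_1(N_C,\tau)$ is computed by a section of $N_C$ whose zeros are counted with sign, and the boundary winding is governed by the writhe $w_\tau(C)$ of that braid. More precisely, deforming $C$ in the normal direction and counting the resulting intersections gives $c_1(N_C,\tau) = Q_\tau(C) + w_\tau(C) - 2\delta(C)$: the $Q_\tau(C)$ term is the relative self-intersection from Definition~\ref{def:Q}, the $w_\tau(C)$ term accounts for the linking/writhe contribution at the ends relating $Q_\tau$ to the honest self-intersection, and the $-2\delta(C)$ is the usual adjunction correction since each singular point of $C$ contributes $2$ to the self-intersection of the normalization relative to $C$ itself (here one invokes Siefring's asymptotic analysis \cite[Cor. 2.6]{siefring1} to know the singularities are isolated and the ends are well-controlled braids). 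Adding the tangential and normal contributions gives \eqref{eqn:RAF}.

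The main obstacle is the normal-bundle computation, specifically making precise the relationship $\#(\dot C \cdot \dot C') - \ell_\tau = Q_\tau$ at the level of a single curve versus a pushoff, and correctly accounting for the asymptotic contributions at the punctures. One must use Siefring's asymptotic expansions to show that a generic pushoff of $C$ in its normal direction is an admissible representative of $[C]$ whose braids at the ends are the pushoffs $\zeta'$ of the braids $\zeta$ of $C$, so that $\ell_\tau(C,C')$ picks up exactly $w_\tau(C) + \eta_\tau$-type terms — and then check that these bookkeeping terms combine with the interior intersection count (which differs from $\delta(C)$ by a factor of $2$, one from each local branch) to yield the stated formula. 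The relative-trivialization bookkeeping, rather than any deep geometric input, is where the care is needed; the argument otherwise parallels the closed-manifold adjunction formula and the corresponding result in \cite{pfh2}.
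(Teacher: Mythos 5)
Your proposal is correct and is essentially the argument the paper relies on: it does not reprove \eqref{eqn:RAF} but cites \cite[\S3]{pfh2}, whose proof is exactly your splitting $c_\tau(C)=c_1(TC,\tau)+c_1(N_C,\tau)$ with $c_1(TC,\tau)=\chi(C)$ from the asymptotically $(\partial_s,R)$-spanned ends and $c_1(N_C,\tau)=Q_\tau(C)+w_\tau(C)-2\delta(C)$ from a normal pushoff whose end braids link the original braids with linking number equal to the writhe, carried over to cobordisms via Siefring's asymptotics \cite{siefring1}. So your route matches the paper's (cited) proof, including the bookkeeping that converts $Q_\tau$ into the honest intersection count plus writhe and the $2\delta$ contribution of the singular points.
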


\noindent
Here $\delta(C)$ is a count of the singularities of $C$ in $X$ with
positive integer weights as in \cite[\S7]{mw}.  The weight of a singular
point $p$ is the number of self-intersections of a perturbation of $C$
to a generic holomorphic immersion in a neighborhood of $p$.  Also,
$w_\tau(C)$ is the {\bf asymptotic writhe\/} of $C$, defined by
obvious analogy with \eqref{eqn:wtauS}.  A proof of the relative
adjunction formula in a slightly different context can be found in
\cite[\S3]{pfh2}, and this carries over in a straightforward manner to
the present situation.

\begin{example}
  If $C$ is closed, then there is no writhe term or trivialization
  choice, and \eqref{eqn:RAF} reduces to the usual adjunction formula
\begin{equation}
\label{eqn:closedAdjunction}
\langle c_1(TX),[C]\rangle = \chi(C) + [C]\cdot[C] - 2 \delta(C).
\end{equation}
\end{example}

\subsection{The Fredholm index}
\label{sec:FI}

Let $C\in\mc{M}(\alpha^+,\alpha^-)$.  For each $i$, let $n_i^+$ denote
the number of positive ends of $u$ at $\alpha_i^+$, and let
$\{q^+_{i,k}\}_{k=1}^{n_i^+}$ denote their multiplicities.  Likewise,
for each $j$, let $n_j^-$ denote the number of negative ends of $u$ at
$\alpha_j^-$, and let $\{q^-_{j,k}\}_{k=1}^{n_j^-}$ denote their
multiplicities. Thus $\sum_{k=1}^{n_i^+}q^+_{i,k}=m_i^+$ and
$\sum_{k=1}^{n_j^-}q^-_{j,k}=m_j^-$.

\begin{notation}If $\tau$ is a trivialization of $\xi_\pm$ over the
  orbits in $\alpha^\pm$, define
\[
\mu_\tau^0(C) \eqdef
\sum_i\sum_{k=1}^{n_i^+}\op{CZ}_\tau((\alpha_i^+)^{q_{i,k}^+}) -
\sum_j\sum_{k=1}^{n_j^-}\op{CZ}_\tau((\alpha_j^-)^{q_{j,k}^-}).
\]
That is, $\mu_\tau^0(C)$ is the sum of the Conley-Zehnder indices of
the positive ends of $C$, minus the sum of the CZ indices of the
negative ends of $C$.  This should be contrasted with $\mu_\tau(C)$,
which is a sum of many more Conley-Zehnder terms:
\[
\mu_\tau(C) = \sum_i\sum_{l=1}^{m_i^+}\op{CZ}_\tau((\alpha_i^+)^l) -
\sum_j\sum_{l=1}^{m_j^-}\op{CZ}_\tau((\alpha_j^-)^l).
\]
\end{notation}

\begin{definition}
Define the {\bf Fredholm index\/}
\begin{equation}
\label{eqn:ind}
\op{ind}(C) \eqdef -\chi(C) + 2 c_\tau(C) + \mu_\tau^0(C).
\end{equation}
\end{definition}

It is shown in \cite{dragnev}, using an index formula from
\cite{schwarz}, that if $J$ is generic and $C$ is simple, then the
moduli space $\mc{M}(\alpha^+,\alpha^-,[C])$ is a manifold near $C$ of
dimension $\op{ind}(C)$.  However in the present paper we do not
assume that $J$ is generic.

\subsection{Incoming and outgoing partitions}

Before stating the index inequality, we need a digression to introduce
some special partitions associated to Reeb orbits.

Let $Y$ be a three-manifold with a stable Hamiltonian structure, let $\gamma$
be an embedded Reeb orbit in $Y$, and let $m$ be a positive integer.

\begin{definition}
\label{def:partitions}
  Define two partitions of $m$, the {\bf incoming partition\/}
  $\pin_\gamma(m)$ and the {\bf outgoing partition\/}
  $\pout_\gamma(m)$, as follows.
\begin{itemize}
\item
If $\gamma$ is positive hyperbolic, then
\[
\pin_\gamma(m)\eqdef \pout_\gamma(m) \eqdef (1,\ldots,1).
\]
\item
If $\gamma$ is negative hyperbolic, then
\[
\pin_\gamma(m) \eqdef \pout_\gamma(m) \eqdef \left\{\begin{array}{cl}
    (2,\ldots,2), & \mbox{$m$ even,}\\ (2,\ldots,2,1), & \mbox{$m$
      odd.} \end{array}
\right.
\]
\item
If $\gamma$ is elliptic with monodromy angle $\theta$, then
$\pin_\gamma(m) \eqdef \pin_\theta(m)$ and $\pout_\gamma(m)\eqdef
\pout_\theta(m)$, where $\pin_\theta(m)$ and $\pout_\theta(m)$ are
defined below.
\end{itemize}
\end{definition}

\begin{definition}
\label{def:partEll}
Let $\theta$ be an irrational number and let $m$ be a positive
integer.  Define partitions $\pin_\theta(m)$ and $\pout_\theta(m)$ of
$m$ as follows.

Let $\lin_\theta(m)$ denote the lowest convex polygonal path in the
plane that starts at $(0,0)$, ends at $(m,\ceil{m\theta})$, stays
  above the line $y=\theta x$, and has corners at lattice points.
  Then the integers in $\pin_\theta(m)$ are the horizontal
  displacements of the segments of the path $\lin_\theta(m)$ between
  lattice points.

Likewise, let $\lout_\theta(m)$ denote the highest concave polygonal
path in the plane that starts at $(0,0)$, ends at
$(m,\floor{m\theta})$, stays below the line $y=\theta x$, and has
corners at lattice points.  Then the integers in $\pout_\theta(m)$ are
the horizontal displacements of the segments of the path
$\lout_\theta(m)$ between lattice points.
\end{definition}

Note that $\pin_\theta(m)$ and $\pout_\theta(m)$ depend only on the
class of $\theta$ in $\R/\Z$.  Also, $\pin_\theta(m) =
\pout_{-\theta}(m)$.  For more about the incoming and outgoing
partitions, see \cite[\S4]{pfh2} and \cite[\S7]{obg1}.

\subsection{Statement of the index inequality}
\label{sec:indI}

Fix a symplectic cobordism $X$ with an admissible almost complex
structure $J$ (not necessarily generic).  Continue with the notation
from \S\ref{sec:FI}.

\begin{theorem}
\label{thm:indI}
Suppose $C\in\mc{M}(\alpha^+,\alpha^-)$ is simple.  Then
\begin{equation}
\label{eqn:indI}
\op{ind}(C) \le I(C) - 2\delta(C).
\end{equation}
Equality holds only if $\{q^+_{i,k}\}=\pout_{\alpha_i^+}(m_i^+)$ for
each $i$, and $\{q^-_{j,k}\}=\pin_{\alpha_j^-}(m_j^-)$ for each $j$.
\end{theorem}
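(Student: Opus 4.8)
The plan is to follow the strategy of the original proof of \eqref{eqn:II} in \cite{pfh2}, reducing the cobordism case to a purely local computation near the Reeb orbits and then to a combinatorial inequality about writhe and partitions. First I would combine the two formulas at hand: substituting the relative adjunction formula \eqref{eqn:RAF} into the definition of $I(C) = c_\tau(C) + Q_\tau(C) + \mu_\tau(C)$ to eliminate $Q_\tau(C)$, and comparing with the Fredholm index \eqref{eqn:ind}. A short manipulation should show that the desired inequality \eqref{eqn:indI} is equivalent to an inequality of the shape
\[
w_\tau(C) \le \bigl(\mu_\tau(C) - \mu_\tau^0(C)\bigr) - \text{(correction terms)},
\]
i.e.\ the asymptotic writhe of $C$ is bounded above by a quantity built purely from Conley-Zehnder data at the ends. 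Here $\mu_\tau(C) - \mu_\tau^0(C)$ is a sum, over each embedded Reeb orbit $\gamma$ appearing with total multiplicity $m$, of $\sum_{l=1}^m \op{CZ}_\tau(\gamma^l)$ minus the contribution $\sum_k \op{CZ}_\tau(\gamma^{q_k})$ of the actual ends with multiplicities $\{q_k\}$ partitioning $m$.

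Next I would localize. The asymptotic writhe $w_\tau(C)$ decomposes as a sum over the positive ends (contributing $\sum_i w_{\tau_i^+}(\zeta_i^+)$) minus a sum over the negative ends, where $\zeta_i^+$ is the braid that $C$ cuts out in a slice near $\alpha_i^+$. So it suffices to prove, for each embedded Reeb orbit $\gamma$ with total end multiplicity $m$ and end-multiplicity partition $\{q_k\}$, a local writhe bound: at a positive end,
\[
w_\tau(\zeta) \le \sum_k \left(\op{CZ}_\tau(\gamma^{q_k}) - q_k\right)\big/? \dots
\]
— the precise right-hand side being the one that, summed up, reproduces the above. The key input for bounding the writhe of each connected braid-strand in terms of the Conley–Zehnder index is Siefring's asymptotic analysis \cite{siefring1}: each end of $C$ is asymptotic to a Reeb orbit with a leading-order term governed by an eigenfunction of the asymptotic operator, whose winding number is controlled by the Conley–Zehnder index. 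This gives that a connected end of multiplicity $q$ has braid winding number at most $\lfloor \op{CZ}/2\rfloor$-type bounds, and then \eqref{eqn:writheUnion} handles the linking between distinct ends at the same orbit.

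The combinatorial heart — and the step I expect to be the main obstacle — is then to show that the resulting sum of local bounds is maximized exactly when the partition $\{q_k\}$ is the outgoing partition $\pout_\gamma(m)$ at positive ends and the incoming partition $\pin_\gamma(m)$ at negative ends. Unwinding the elliptic case via \eqref{eqn:CZEll}, this becomes a statement about the lattice-path polygons $\lout_\theta(m)$ and $\lin_\theta(m)$ of Definition~\ref{def:partEll}: one wants that among all ways of writing $m$ as an ordered sum $\sum q_k$, the quantity $\sum_k \bigl(\lfloor q_k\theta\rfloor - \text{pairwise linking corrections}\bigr)$ is maximized by the path that stays just below (resp.\ above) the line $y=\theta x$. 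The paper advertises ``a new and simpler proof of the key combinatorial lemma'' here, so I would look for a direct convexity/rounding argument: a segment of the optimal path from $(a, \lfloor a\theta\rfloor)$ to $(b,\lfloor b\theta\rfloor)$ should be indecomposable precisely when it contains no interior lattice point below the line, and any decomposition can only decrease the total by Jensen/rounding inequalities applied to $x \mapsto \lfloor x\theta\rfloor$. The hyperbolic cases reduce to the elementary facts that $w_\tau$ of a braid on $m$ strands each winding $n$ times has an explicit bound, with the $1$'s or $2$'s partition being forced. Finally, tracking the equality case through Siefring's estimate (each end actually achieves the extremal winding) together with the combinatorial maximizer yields the stated characterization of when \eqref{eqn:indI} is an equality.
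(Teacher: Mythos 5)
Your overall route is the same as the paper's: the relative adjunction formula \eqref{eqn:RAF} together with the index formula \eqref{eqn:ind} reduces \eqref{eqn:indI} to the writhe bound \eqref{eqn:writheBound} (note there are no extra ``correction terms'': the $\chi$ and $2\delta$ terms cancel exactly, so the statement is precisely $w_\tau(C)\le \mu_\tau(C)-\mu_\tau^0(C)$); this is then localized at each embedded orbit, with the analytic input being the Siefring-based winding bound for a single end (Lemma~\ref{lem:WBE}) and the linking bound for distinct ends (Lemma~\ref{lem:linkingBound}), combined via \eqref{eqn:writheUnion}. Up to that point your plan reproduces the paper's argument faithfully.

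The gap is the combinatorial heart, which you correctly identify as the main obstacle but only gesture at, and the replacement you suggest is not obviously adequate. The inequality actually needed (Lemma~\ref{lem:CE0}, which in the elliptic case unwinds via \eqref{eqn:CZEll} to Lemma~\ref{lem:CE1}) is not a superadditivity or rounding statement about $x\mapsto\floor{x\theta}$ alone: its left side contains the pairwise terms $\max(q_i\floor{q_j\theta},q_j\floor{q_i\theta})$ coming from the linking bounds, and a Jensen-type argument applied to $\floor{x\theta}$ does not control these cross terms. The paper's ``new and simpler proof'' is instead an area computation: order the $q_i$ by the slopes $\floor{q_i\theta}/q_i$, form the concave lattice path with edge vectors $(q_i,\floor{q_i\theta})$, observe that the left-hand side is exactly twice the area under this path \eqref{eqn:pick1}, and then apply Pick's formula \eqref{eqn:pick2} together with a columnwise count of interior lattice points \eqref{eqn:pick3} and a lower bound on boundary lattice points \eqref{eqn:pick4}. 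The equality characterization in the theorem also depends on this: equality in \eqref{eqn:pick3} forces the path to coincide with $\lout_\theta(m)$, equality in \eqref{eqn:pick4} forces the edge vectors to be primitive, and in the hyperbolic cases one must combine the combinatorial equality condition (Lemma~\ref{lem:CE0}(iii)) with the analytic ones from Lemma~\ref{lem:WBE} ($q_i=1$ at positive hyperbolic orbits; $q_i$ odd or $q_i=2$ at negative hyperbolic ones) to force the stated partitions. So either develop your convexity idea into an argument that genuinely handles the $\max$ cross terms, or adopt the Pick's-theorem computation; as written, the key combinatorial lemma and hence the equality statement remain unproved.
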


The proof of Theorem~\ref{thm:indI} has three ingredients.  The first
ingredient is the relative adjunction formula \eqref{eqn:RAF}, which
implies that the index inequality \eqref{eqn:indI} is equivalent to
the writhe bound
\begin{equation}
\label{eqn:writheBound}
w_\tau(C) \le \mu_\tau(C) - \mu_\tau^0(C).
\end{equation}
The second ingredient is an analytic bound on the writhe $w_\tau(C)$,
and the third ingredient is a combinatorial inequality.  We now
explain these.

\subsection{The analytic writhe bound}

Fix an embedded Reeb orbit $\gamma$ in $Y_+$ at which $C$ has positive
ends of multiplicities $q_1,\ldots,q_n$ with total multiplicity $m$.
These ends determine a braid $\zeta$ around $\gamma$ with components
$\zeta_1,\ldots,\zeta_n$, where $\zeta_i$ has $q_i$ strands.  The
braid $\zeta$ is the intersection of $C$ with $\{R\}\times N$, where
$R>>0$ and $N\subset Y_+$ is a small tubular neighborhood of $\gamma$.

Now fix a trivialization $\tau$ of $\xi$ over $\gamma$.  We have the
following two key analytic lemmas about the writhes and linking
numbers of the braids $\zeta_i$.  To simplify notation, write
\[
\rho_i \eqdef \floor{\frac{\op{CZ}_\tau(\gamma^{q_i})}{2}}.
\]

\begin{lemma}
\label{lem:WBE}
Let $i\in\{1,\ldots,n\}$.  Then
\begin{equation}
\label{eqn:WBE}
w_\tau(\zeta_i) \le
\rho_i(q_i-1).
\end{equation}
Equality holds only if:
\begin{description}
\item{(i)}
If $\gamma$ is positive hyperbolic, then $q_i=1$.
\item{(ii)}
If $\gamma$ is negative hyperbolic, then $q_i$ is odd or $q_i=2$.
\end{description}
\end{lemma}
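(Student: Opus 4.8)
The plan is to analyze the braid $\zeta_i$ arising from a single positive end of $C$ at $\gamma$ of multiplicity $q_i$, using the asymptotic analysis of Siefring \cite{siefring1}. The key input is that, after a suitable coordinate change near $\gamma$, the end of $C$ is asymptotically described by an eigenfunction of the asymptotic operator associated to $\gamma^{q_i}$ (or a sum of such eigenfunctions with the dominant one controlling the leading behavior). The relevant eigenvalue is negative, and the winding number of the corresponding eigenfunction around $\gamma$ is bounded above in terms of $\op{CZ}_\tau(\gamma^{q_i})$; specifically, the winding number of any eigenfunction with negative eigenvalue is at most $\rho_i = \floor{\op{CZ}_\tau(\gamma^{q_i})/2}$. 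This is the $3$-dimensional incarnation of the fact that the Conley–Zehnder index counts (twice) the winding of the relevant eigenspaces, together with the spectral-flow description of $\op{CZ}$.

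First I would recall, via Siefring's asymptotics, that the braid $\zeta_i$ (which is $q_i$ strands, being the intersection of a $q_i$-fold-covered-type end with a slice $\{R\}\times N$) has all of its strands winding around $\gamma$ with the \emph{same} total winding number, namely the winding number $\operatorname{wind}_\tau$ of the asymptotic eigenfunction, which satisfies $\operatorname{wind}_\tau \le \rho_i$. Second, I would bound the writhe of such a braid: a braid on $q_i$ strands all of whose strands wind with winding number $w$ around the core, and which arises as an actual asymptotic braid of a pseudoholomorphic curve, has writhe at most $w(q_i-1)$. The point is that the writhe measures the signed crossings among the $q_i$ strands; since the curve is holomorphic, the crossings all have a definite sign (this is where intersection positivity / the local model for holomorphic ends enters), and the number of such crossings in a braid whose strands each wrap $w$ times is exactly $w(q_i-1)$ in the extremal case, giving $w_\tau(\zeta_i) \le w(q_i - 1) \le \rho_i(q_i-1)$.

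Third, I would track the equality conditions. Equality $w_\tau(\zeta_i) = \rho_i(q_i-1)$ forces both (a) the asymptotic winding number to be maximal, equal to $\rho_i$, and (b) the holomorphic braid to be in the ``extremal'' configuration realizing the maximum writhe for that winding. Condition (a), by the explicit formulas \eqref{eqn:CZHyp} and \eqref{eqn:CZEll}: in the positive hyperbolic case $\op{CZ}_\tau(\gamma^{q_i}) = q_i n$ with $n$ even, and the eigenvalue winding picture degenerates unless $q_i = 1$ (for $q_i \ge 2$ the eigenfunctions of $\gamma^{q_i}$ split into families with distinct windings and the extremal one is not attained by a connected holomorphic end, forcing $q_i=1$); in the negative hyperbolic case one similarly finds the maximal-winding eigenfunction is attained by a connected braid only when $q_i$ is odd or $q_i = 2$, because for even $q_i \ge 4$ the branched structure of the end again precludes it. These are exactly conditions (i) and (ii).

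The main obstacle will be step two and the equality analysis: turning the asymptotic winding bound into a sharp writhe bound requires a careful local model for the holomorphic end — one must know not just that all strands have the same winding but control the relative positions of the strands well enough to see that holomorphicity makes every crossing positive, and to identify precisely when the count $w(q_i-1)$ is saturated. This is where Siefring's refined asymptotics (including the relative asymptotic expansion comparing nearby strands) is essential, and where the hyperbolic/elliptic case distinctions in the equality clause come from. I would isolate this as the key lemma on asymptotic braids of holomorphic ends, citing \cite{siefring1} for the underlying analysis, and then the inequality \eqref{eqn:WBE} follows by combining the winding bound $\operatorname{wind}_\tau \le \rho_i$ with the writhe-versus-winding count.
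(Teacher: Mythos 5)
Note first that the paper does not actually reprove Lemma~\ref{lem:WBE}: it is quoted from \cite[\S6]{pfh2}, with \cite[Theorems 2.2 and 2.3]{siefring1} supplying the asymptotic analysis needed to transplant that argument to the present setting, so your proposal has to be measured against that cited proof. Your outline has the right skeleton (Siefring-type asymptotics, the spectral bound $\op{wind}_\tau \le \rho_i = \floor{\op{CZ}_\tau(\gamma^{q_i})/2}$ for asymptotic eigenfunctions with negative eigenvalue, then a conversion of winding into writhe), but the conversion step, which is the heart of the lemma, is justified incorrectly. You assert that ``holomorphicity makes every crossing positive'' and that a braid whose strands each wind $w$ times has at most $w(q_i-1)$ crossings. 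Positivity of crossings is not the mechanism: the strands of a single embedded end are disjoint in the slice $\{R\}\times Y$, so there are no intersection points to which intersection positivity could apply, and in any case a sign statement about crossings would bound the writhe from \emph{below}, not above. Moreover $\zeta_i$ is a \emph{connected} braid (it comes from one end of multiplicity $q_i$), so it is not a union of $q_i$ closed strands each winding $w$ times; what the winding bound controls is the linking of the whole braid with $\gamma$ and, crucially, the winding numbers of \emph{differences of pairs of strands}. The cited proof expresses the writhe as a sum of winding numbers of such differences, uses the relative asymptotic formula of \cite{siefring1} to show that each difference decays like an eigenfunction (of the asymptotic operator of a suitable iterate of $\gamma$) whose winding is at most the relevant $\floor{\op{CZ}/2}$, and then carries out an induction over the nested cabling structure of the braid determined by the decay rates; that bookkeeping is where the factor $q_i-1$ actually comes from, and it is the part your sketch leaves unproved.

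The equality analysis also needs the correct mechanism, which is the one indicated in the parenthetical remark after the lemma: conditions (i) and (ii) are special cases of an improvement of \eqref{eqn:WBE} available when $\rho_i$ and $q_i$ have a common factor. For $\gamma$ positive hyperbolic one has $\op{CZ}_\tau(\gamma^{q_i})=q_i n$ with $n$ even, so $\rho_i = q_i n/2$ shares the factor $q_i$ with $q_i$; the extremal-winding eigenfunction is then pulled back from a lower iterate, the leading-order differences of strands vanish, and one must pass to lower-order terms of strictly smaller winding, forcing strict inequality unless $q_i=1$. Similarly, for $\gamma$ negative hyperbolic and $q_i$ even, $\rho_i=(q_i/2)n$ with $n$ odd shares the factor $q_i/2$ with $q_i$, which rules out equality for even $q_i\ge 4$ and leaves exactly ``$q_i$ odd or $q_i=2$.'' Your appeal to ``the branched structure of the end'' gestures toward this but never identifies the divisibility argument, so as written both the main inequality and the equality clause rest on steps that are not established.
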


(The inequality \eqref{eqn:WBE} can sometimes be improved when
$\rho_i$ and $q_i$ have a common factor.  The necessary conditions
(i) and (ii) for equality are a special case of this improvement.)

\begin{lemma}
\label{lem:linkingBound}
Let $i,j\in\{1,\ldots,n\}$ be distinct.  Then
\[
\ell_\tau(\zeta_i,\zeta_j) \le
\max\left(q_i\rho_j,
q_j\rho_i\right).
\]
\end{lemma}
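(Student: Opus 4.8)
The plan is to derive Lemma~\ref{lem:linkingBound}---and, by the same method, Lemma~\ref{lem:WBE}---from Siefring's asymptotic analysis of punctured pseudoholomorphic curves \cite{siefring1}, following the argument for the analogous statements in \cite[\S4]{pfh2} but feeding in Siefring's relative asymptotic formula in place of the simpler asymptotics available in the mapping-torus context of \cite{pfh2}. The first step is to record what the asymptotics say about one braid component: for each positive end of $C$ at $\gamma$ of multiplicity $q$, Siefring's expansion gives, for $s$ large, $u(s,t)=\exp_{\gamma^q(t)}\!\big(e^{as}(\psi(t)+o(1))\big)$, where $a<0$ is an eigenvalue of the asymptotic operator of $\gamma^q$---nonzero since $\gamma$ is nondegenerate---and $\psi$ is a nowhere-vanishing eigenfunction. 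Consequently the braid $\zeta$ cut out by this end at a large slice $\{R\}\times N$ winds around $\gamma$ a net $\op{wind}_\tau(\psi)$ times, i.e.\ $\eta_\tau(\zeta)=\ell_\tau(\zeta,\gamma)=\op{wind}_\tau(\psi)$. By the Hofer--Wysocki--Zehnder inequality, an eigenfunction with negative eigenvalue has winding number at most $\floor{\op{CZ}_\tau(\gamma^q)/2}$; applied to the two ends under consideration this yields $\eta_\tau(\zeta_i)\le\rho_i$ and $\eta_\tau(\zeta_j)\le\rho_j$.

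The bound on $\ell_\tau(\zeta_i,\zeta_j)$ then follows by understanding the relative position of the two braids near infinity. In the basic case, one end decays strictly faster than the other---say $\zeta_j$ lies inside a round torus of radius much smaller than any part of $\zeta_i$---and then $\zeta_j$ can be contracted radially onto a framed pushoff of $\gamma$ with $q_j$ strands without ever crossing $\zeta_i$, so that
\[
\ell_\tau(\zeta_i,\zeta_j)=q_j\,\ell_\tau(\zeta_i,\gamma)=q_j\,\eta_\tau(\zeta_i)\le q_j\rho_i\le\max(q_i\rho_j,q_j\rho_i);
\]
the reverse case is symmetric. The general case is reduced to computations of this type by passing to the common cover of $\gamma$ of multiplicity $\op{lcm}(q_i,q_j)$ and applying Siefring's \emph{relative} asymptotic formula to the difference of the two ends: this difference is itself governed by a single nowhere-vanishing eigenfunction $\phi$, whose winding number is again bounded via Hofer--Wysocki--Zehnder, and which dictates how the strands of $\zeta_i$ interleave with those of $\zeta_j$; a bookkeeping argument with these winding numbers gives the asserted bound, just as in \cite[\S4]{pfh2}.

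I expect the main obstacle to be this last reduction. Once the two ends decay at the same exponential rate---which can happen even when $q_i\neq q_j$---the braids are no longer radially nested, one cannot sidestep the full relative asymptotic analysis, and one must translate the winding number of the relative asymptotic eigenfunction, together with the interleaving pattern of the sheets of the two ends, into a sharp bound on the linking number of two multi-strand braids; it is here that the combinatorics is least transparent and where Siefring's estimates do the real work. (The companion Lemma~\ref{lem:WBE} is proved along the same lines, the necessary conditions (i) and (ii) for equality reflecting the elementary improvement to the writhe bound available when $\rho_i$ and $q_i$ share a common factor.)
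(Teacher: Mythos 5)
Your strategy is essentially the one this paper itself relies on: Lemma~\ref{lem:linkingBound} is not reproved here but imported from \cite[\S 6]{pfh2} (note: \S 6, not \S 4), with Siefring's asymptotic theorems supplying, in the stable Hamiltonian setting, the expansions that were available more easily in the setting of \cite{pfh2}; your nesting argument when the two ends have distinct exponential decay rates (where $\ell_\tau(\zeta_i,\zeta_j)=q_j\,\eta_\tau(\zeta_i)\le q_j\rho_i$, or symmetrically), and your reduction of the equal-rate case to the relative asymptotic formula on a common cover plus the Hofer--Wysocki--Zehnder winding bound, is exactly the structure of that argument. Two caveats. First, the step you flag as the main obstacle --- converting the winding bound for the relative asymptotic eigenfunction into the bound $\max(q_i\rho_j,q_j\rho_i)$ on the linking number of two interleaved multi-strand braids --- is precisely the technical heart of \cite[\S 6]{pfh2} and is not carried out in your sketch; since the present paper likewise defers this to \cite{pfh2}, your proposal is consistent with the paper, but it is not self-contained at exactly the point where the work lies. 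Second, your setup assumes each end is described by a nowhere-vanishing leading eigenfunction, which fails when one of the ends is trivial, i.e.\ equal to $[0,\infty)\times\gamma$ (possible, since the simple curve $C$ may contain the trivial cylinder as a component); the paper handles this case separately in a footnote: then $q_i=1$, $\ell_\tau(\zeta_i,\zeta_j)=\eta_\tau(\zeta_j)$, and the winding bound of \cite{hwz2} gives $\eta_\tau(\zeta_j)\le\rho_j$, which suffices. With these two points supplied, your outline matches the intended proof.
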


These lemmas were proved in \cite[\S6]{pfh2} in an easier
setting\footnote{Note that \cite[\S6]{pfh2} discusses negative ends
instead of positive ends, but this is completely analogous.} \footnote{The
proof of Lemma~\ref{lem:linkingBound} in \cite{pfh2} assumed that
neither of the positive ends of $C$ corresponding to $\zeta_i$ or
$\zeta_j$ is ``trivial'', i.e. of the form $[0,\infty)\times\gamma$.
But this assumption is easily dropped: if the $i$ end is trivial, then
$q_i=1$ and $\ell_\tau(\zeta_i,\zeta_j) = \eta_\tau(\zeta_j)$, and a
fundamental winding number bound from \cite{hwz2} asserts that
$\eta_\tau(\zeta_j)\le\rho_j$.}.  The asymptotic analysis necessary to
carry them over to the present setting was done by Siefring
\cite[Theorems 2.2 and 2.3]{siefring1}.  Combining the above two
lemmas and using equation \eqref{eqn:writheUnion}, we obtain
the following bound on the writhe of $\zeta$:

\begin{lemma}
\label{lem:writheBound}
\begin{equation}
\label{eqn:mainWritheBound}
w_\tau(\zeta) \le \sum_{i,j=1}^n \max(q_i\rho_j,q_j\rho_i) -
\sum_{i=1}^n\rho_i.
\end{equation}
Equality holds only if conditions (i) and (ii) in Lemma~\ref{lem:WBE}
hold.
\end{lemma}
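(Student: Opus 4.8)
The plan is to combine Lemmas~\ref{lem:WBE} and~\ref{lem:linkingBound} with the writhe addition formula \eqref{eqn:writheUnion}, since once those two analytic bounds are in hand the rest is purely formal. First I would apply \eqref{eqn:writheUnion} inductively to the decomposition $\zeta=\zeta_1\cup\cdots\cup\zeta_n$; an easy induction on $n$ shows that the cross terms run over each unordered pair $\{i,j\}$ exactly once, giving
\[
w_\tau(\zeta) = \sum_{i=1}^n w_\tau(\zeta_i) + 2\sum_{1\le i<j\le n}\ell_\tau(\zeta_i,\zeta_j).
\]
Then I would substitute the bound $w_\tau(\zeta_i)\le\rho_i(q_i-1)$ from Lemma~\ref{lem:WBE} into the first sum and the bound $\ell_\tau(\zeta_i,\zeta_j)\le\max(q_i\rho_j,q_j\rho_i)$ from Lemma~\ref{lem:linkingBound} into the second, obtaining
\[
w_\tau(\zeta)\le \sum_{i=1}^n\rho_i(q_i-1) + 2\sum_{1\le i<j\le n}\max(q_i\rho_j,q_j\rho_i).
\]

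Next I would rewrite the right-hand side to match the stated form \eqref{eqn:mainWritheBound}. The key observation is that $\max(q_i\rho_j,q_j\rho_i)$ is symmetric in $i$ and $j$ and reduces to $q_i\rho_i$ when $i=j$, so
\[
\sum_{i,j=1}^n\max(q_i\rho_j,q_j\rho_i) = \sum_{i=1}^n q_i\rho_i + 2\sum_{1\le i<j\le n}\max(q_i\rho_j,q_j\rho_i).
\]
Combining this with the identity $\sum_{i=1}^n\rho_i(q_i-1) = \sum_{i=1}^n q_i\rho_i - \sum_{i=1}^n\rho_i$ converts the previous inequality into exactly $w_\tau(\zeta)\le \sum_{i,j}\max(q_i\rho_j,q_j\rho_i) - \sum_i\rho_i$.

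Finally, for the equality clause: since $w_\tau(\zeta)$ has been expressed as a sum of terms each bounded above, equality in \eqref{eqn:mainWritheBound} forces equality in $w_\tau(\zeta_i)\le\rho_i(q_i-1)$ for every $i$ (and in each linking bound, though that is not needed), and by the equality conditions in Lemma~\ref{lem:WBE} this requires conditions (i) and (ii) to hold for each $i$. I do not expect any genuine obstacle here: all of the geometric and asymptotic input has already been isolated in Lemmas~\ref{lem:WBE} and~\ref{lem:linkingBound}, and what remains is the bookkeeping above, the only mildly delicate point being the correct separation of diagonal from off-diagonal terms in the double sum.
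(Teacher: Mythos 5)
Your proposal is correct and is exactly the paper's argument: the paper derives Lemma~\ref{lem:writheBound} by combining Lemmas~\ref{lem:WBE} and~\ref{lem:linkingBound} via equation \eqref{eqn:writheUnion}, and your bookkeeping (separating the diagonal terms $q_i\rho_i$ from the off-diagonal $\max$ terms, and noting that equality forces equality in each instance of Lemma~\ref{lem:WBE}) just makes explicit what the paper leaves to the reader.
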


To understand the right hand side of \eqref{eqn:mainWritheBound}, we will
shortly prove the following combinatorial lemma:

\begin{lemma}
\label{lem:CE0}
\begin{equation}
\label{eqn:CI}
\sum_{i,j=1}^n \max(q_i\rho_j,q_j\rho_i) -
\sum_{i=1}^n\rho_i
\le
\sum_{k=1}^m \op{CZ}_\tau(\gamma^k) -
\sum_{i=1}^n\op{CZ}_\tau(\gamma^{q_i}).
\end{equation}
Equality holds if and only if:
\begin{description}
\item{(iii)}
If $\gamma$ is negative hyperbolic, then all $q_i$'s are even, except
that one $q_i$ might equal $1$.
\item{(iv)}
If $\gamma$ is elliptic with monodromy angle $\theta$, then
$(q_1,\ldots,q_n) = \pout_\theta(m)$.
\end{description}
\end{lemma}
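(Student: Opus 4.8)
The plan is to treat the three types of Reeb orbit separately, with the elliptic case being the substantive one. If $\gamma$ is hyperbolic, then $\op{CZ}_\tau(\gamma^k)=kn_0$ for a fixed integer $n_0$, which is even precisely when $\gamma$ is positive hyperbolic, and $\rho_i=\floor{q_in_0/2}=\tfrac12(q_in_0-\epsilon_i)$, where $\epsilon_i=1$ if $\gamma$ is negative hyperbolic and $q_i$ is odd, and $\epsilon_i=0$ otherwise. Using the resulting identity $\max(q_i\rho_j,q_j\rho_i)=\tfrac12 q_iq_jn_0-\tfrac12\min(q_i,q_j)\epsilon_i\epsilon_j$ and summing over $i,j$ (with $\sum_iq_i=m$), one finds that the left-hand side of \eqref{eqn:CI} equals its right-hand side minus $\tfrac12(M-p)$, where $M:=\sum_{i,j\,:\,\epsilon_i=\epsilon_j=1}\min(q_i,q_j)$ and $p:=\sum_i\epsilon_i$. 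Thus \eqref{eqn:CI} is equivalent to $M\ge p$, which holds since $M\ge\sum_{i:\epsilon_i=1}q_i\ge p$; and equality forces at most one index $i$ with $\epsilon_i=1$ and that $q_i=1$, which unwinds to condition (iii) (and to no condition at all when $\gamma$ is positive hyperbolic).

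Now suppose $\gamma$ is elliptic with monodromy angle $\theta$, so $\rho_i=\floor{q_i\theta}$ and $\op{CZ}_\tau(\gamma^k)=2\floor{k\theta}+1$. Here the plan is to read both sides of \eqref{eqn:CI} as counts of lattice points and reduce to Pick's theorem. Reorder the blocks so that the slopes $\rho_i/q_i$ are nonincreasing, and let $\Pi$ be the concave lattice path from $(0,0)$ to $\bigl(m,\sum_i\rho_i\bigr)$ whose $j$th segment has horizontal displacement $q_j$ and vertical displacement $\rho_j$; because every slope $\rho_i/q_i<\theta$, the path $\Pi$ lies strictly below the line $y=\theta x$ away from the origin. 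A straightforward computation with the reordered sum gives $\sum_{i,j}\max(q_i\rho_j,q_j\rho_i)=2\,\op{Area}(\Pi)$, where $\op{Area}(\Pi)$ is the area of the region bounded by $\Pi$, the vertical segment down to $(m,0)$, and the horizontal segment back to $(0,0)$. (If $\sum_i\rho_i=0$ this region is degenerate, but then the left-hand side of \eqref{eqn:CI} is $\le0$ while its right-hand side is $\ge0$, so assume $\sum_i\rho_i\ge1$.) Pick's theorem applied to this lattice polygon, using that its $j$th upper edge contains $\gcd(q_j,\rho_j)-1$ lattice points in its interior, gives
\[
2\,\op{Area}(\Pi)=2L+m-\sum_{i=1}^n\rho_i-\sum_{j=1}^n\gcd(q_j,\rho_j),
\]
where $L$ is the number of lattice points $(x,y)$ with $1\le x\le m$ and $y\ge1$ that lie on or below $\Pi$. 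Substituting this and $\op{CZ}_\tau(\gamma^{q_i})=2\rho_i+1$ into \eqref{eqn:CI}, the claimed inequality becomes
\[
2(N-L)\;\ge\;-\sum_{j=1}^n\bigl(\gcd(q_j,\rho_j)-1\bigr),
\]
where $N=\sum_{k=1}^m\floor{k\theta}$ is the number of lattice points $(x,y)$ with $1\le x\le m$ and $1\le y\le\floor{x\theta}$.

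Since $\Pi$ lies below the line, every lattice point counted by $L$ is also counted by $N$, so $L\le N$: the left-hand side of the last display is $\ge0$ and the right-hand side is $\le0$, which proves \eqref{eqn:CI}. Equality forces both $L=N$---i.e.\ there is no lattice point strictly between $\Pi$ and the line---and $\gcd(q_j,\rho_j)=1$ for every $j$. The first condition forces the right endpoint of $\Pi$ to be $(m,\floor{m\theta})$ and then identifies $\Pi$ with the unique concave lattice path with that property, namely $\lout_\theta(m)$ subdivided at all of its lattice points; together with the primitivity of the edges this says exactly that $(q_1,\dots,q_n)=\pout_\theta(m)$, which is condition (iv). (The converse, that $\pout_\theta(m)$ does achieve equality, uses the standard facts that the segments of $\lout_\theta(m)$ between consecutive lattice points are primitive, that their endpoints have the form $(k,\floor{k\theta})$, and that no lattice point lies strictly between $\lout_\theta(m)$ and the line; see \cite[\S4]{pfh2}.) I expect this elliptic equality analysis---and with it the exact Pick's-theorem bookkeeping, where one must account carefully for lattice points on the edges of $\Pi$ and for the small cases of $m$ and $\sum_i\rho_i$---to be the main obstacle, though the argument remains elementary throughout.
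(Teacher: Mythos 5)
Your proposal is correct and takes essentially the same route as the paper: in the elliptic case you use the identical device of ordering the ends by slope, forming the concave lattice path with edge vectors $(q_i,\floor{q_i\theta})$, identifying $\sum_{i,j}\max(q_i\rho_j,q_j\rho_i)$ with twice the area under that path, and applying Pick's theorem, with equality characterized by the path coinciding with $\lout_\theta(m)$ and its edges being primitive, while the hyperbolic cases are the same easy computation in slightly different packaging (you avoid normalizing the trivialization; the paper sets $\op{CZ}_\tau(\gamma^k)=0$ or $k$). The only differences are bookkeeping --- the paper counts all lattice points in the region and bounds the boundary count $B$, whereas you count points with $y\ge 1$ and track the $\gcd$ terms exactly --- plus you set aside the degenerate case $\sum_i\rho_i=0$, where the ``if and only if'' statement still needs (and easily gets) a one-line check.
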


Granted this lemma, combining it with Lemma~\ref{lem:writheBound}
gives

\begin{lemma}
\label{lem:UWB}
\[
w_\tau(\zeta) \le \sum_{k=1}^m \op{CZ}_\tau(\gamma^k) -
\sum_{i=1}^n\op{CZ}_\tau(\gamma^{q_i}),
\]
with equality only if $(q_1,\ldots,q_k)=\pout_\gamma(m)$.
\end{lemma}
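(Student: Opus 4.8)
The plan is to obtain Lemma~\ref{lem:UWB} by simply chaining the two preceding lemmas. Lemma~\ref{lem:writheBound} gives
\[
w_\tau(\zeta) \le \sum_{i,j=1}^n \max(q_i\rho_j,q_j\rho_i) - \sum_{i=1}^n\rho_i,
\]
and Lemma~\ref{lem:CE0} bounds the right-hand side of this above by $\sum_{k=1}^m \op{CZ}_\tau(\gamma^k) - \sum_{i=1}^n\op{CZ}_\tau(\gamma^{q_i})$. Composing the two inequalities immediately yields the stated bound on $w_\tau(\zeta)$, so the inequality part of the lemma is a formality once Lemma~\ref{lem:CE0} is in hand.

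For the equality clause I would track, step by step, what equality in each input forces. Equality in Lemma~\ref{lem:writheBound} requires conditions (i) and (ii) of Lemma~\ref{lem:WBE}: if $\gamma$ is positive hyperbolic then every $q_i=1$, and if $\gamma$ is negative hyperbolic then every $q_i$ is odd or equals $2$. Equality in Lemma~\ref{lem:CE0} requires conditions (iii) and (iv): if $\gamma$ is negative hyperbolic then all $q_i$ are even, except that one may equal $1$, and if $\gamma$ is elliptic with monodromy angle $\theta$ then $(q_1,\dots,q_n)=\pout_\theta(m)$. Hence when equality holds throughout, all of (i)--(iv) that apply must hold simultaneously.

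It then remains to check, case by case, that the conjunction of the applicable conditions is exactly the statement $(q_1,\dots,q_n)=\pout_\gamma(m)$. If $\gamma$ is positive hyperbolic, (i) says every $q_i=1$, which is $\pout_\gamma(m)=(1,\dots,1)$ by Definition~\ref{def:partitions}. If $\gamma$ is negative hyperbolic, then (ii) rules out even $q_i\ge 4$ and, combined with (iii), forces any odd $q_i$ to equal $1$ and to occur at most once; so every $q_i$ equals $2$ except possibly one which equals $1$, and imposing $\sum_i q_i=m$ then pins down $(q_1,\dots,q_n)$ as $(2,\dots,2)$ when $m$ is even and $(2,\dots,2,1)$ when $m$ is odd, which is again $\pout_\gamma(m)$. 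If $\gamma$ is elliptic, (iv) is the assertion verbatim, since $\pout_\gamma(m)=\pout_\theta(m)$ by definition. This completes the argument.

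I do not expect any serious obstacle in this lemma itself: all of the analytic and combinatorial work is concentrated in Lemmas~\ref{lem:WBE}, \ref{lem:linkingBound}, and \ref{lem:CE0}, and Lemma~\ref{lem:UWB} is just the resulting bookkeeping. The one point to be careful about is the negative hyperbolic equality case, where it is the constraint $\sum_i q_i=m$ --- not conditions (i)--(iv) alone --- that accounts for the single leftover strand of multiplicity $1$ when $m$ is odd.
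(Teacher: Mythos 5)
Your proposal is correct and is essentially the paper's own argument: the paper obtains Lemma~\ref{lem:UWB} precisely by combining Lemma~\ref{lem:writheBound} with Lemma~\ref{lem:CE0}, and your case-by-case verification that the equality conditions (i)--(iv), together with $\sum_i q_i=m$, force $(q_1,\dots,q_n)=\pout_\gamma(m)$ is the (implicit) bookkeeping the paper leaves to the reader.
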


This lemma implies Theorem~\ref{thm:indI}, because combining these
inequalities for all the orbits in $\alpha^+$, along with analogous
inequalities for the orbits in $\alpha^-$, shows that the inequality
\eqref{eqn:writheBound} holds, with equality only under the conditions
stipulated in Theorem~\ref{thm:indI}.

\subsection{Proof of the combinatorial lemma \ref{lem:CE0}}

The proof of Lemma~\ref{lem:CE0} is easy when $\gamma$ is positive
hyperbolic, because by \eqref{eqn:CZHyp} we can choose the
trivialization $\tau$ so that $\op{CZ}_\tau(\gamma^k)=0$ for all $k$,
and then both sides of the inequality \eqref{eqn:CI} vanish.

If $\gamma$ is negative hyperbolic, then we can choose the trivialization
$\tau$ so that $\op{CZ}_\tau(\gamma^k)=k$.  It is convenient to order
the $q_i$'s so that $q_1,\ldots,q_k$ are odd, $q_{k+1},\ldots,q_n$
are even, and $q_1\ge \cdots \ge q_k$.  Then a straightforward
computation shows that the inequality \eqref{eqn:CI} is equivalent to
\[
\sum_{i=1}^k\left(1 - i +
\frac{1-q_i}{2}\right) \le 0.
\]
Lemma~\ref{lem:CE0} follows immediately in this case.

Finally, if $\gamma$ is elliptic with monodromy angle $\theta$, then by
equation \eqref{eqn:CZEll}, Lemma~\ref{lem:CE0} reduces to the
following lemma.  This was proved in \cite{pfh2}, but we will give a new
and more transparent proof here.

\begin{lemma}
\label{lem:CE1}
Let $\theta$ be an irrational number, and let $q_1,\ldots,q_n$ be
positive integers with $m\eqdef \sum_{i=1}^n q_i$.  Then
\[
\sum_{i,j=1}^n\max(q_i\floor{q_j\theta},q_j\floor{q_i\theta}) \le
2\sum_{k=1}^m\floor{k\theta} - \sum_{i=1}^n\floor{q_i\theta} + m - n.
\]
Equality holds if and only if $(q_1,\ldots,q_k) = \pout_\theta(m)$.
\end{lemma}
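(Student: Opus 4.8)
The plan is to repackage both sides geometrically, which makes the role of $\pout_\theta(m)$ transparent, and then reduce to a one–variable estimate about the points $\{k\theta\}$. (Replacing $\theta$ by $\theta+1$ shifts both sides by $m^2$ and does not change $\pout_\theta(m)$, so we may assume $0<\theta<1$.) First relabel the parts so that the slopes $\floor{q_i\theta}/q_i$ are weakly decreasing, and set $s_i=q_1+\cdots+q_i$. Grouping the diagonal and off–diagonal terms, the left–hand side of the inequality becomes $\sum_i \floor{q_i\theta}\,(2m-s_i-s_{i-1})$, which an Abel–summation identity rewrites as $2\op{Area}(\Lambda_Q)$, where $\Lambda_Q$ is the concave lattice path from $(0,0)$ with successive edge vectors $(q_i,\floor{q_i\theta})$ and $\op{Area}$ denotes the area under the path; concavity is the sorting condition, and $\Lambda_Q$ lies below $y=\theta x$ because each $\floor{q_i\theta}/q_i<\theta$.

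Next I would identify $\lout_\theta(m)$ with the upper concave envelope $\Lambda^*$ of the lattice points $(k,\floor{k\theta})$, $0\le k\le m$. One checks that every lattice point on $\Lambda^*$ has the form $(k,\floor{k\theta})$ and that the edges of $\Lambda^*$ have strictly decreasing slopes, all $<\theta$; hence each edge vector of $\Lambda^*$ is exactly $(q_i^*,\floor{q_i^*\theta})$ where $(q_i^*)$ lists the horizontal displacements of $\Lambda^*$ between consecutive lattice points, so $(q_i^*)=\pout_\theta(m)$ and $\sum_i\floor{q_i^*\theta}=\floor{m\theta}$. Moreover the vertices $(s_i,\sum_{j\le i}\floor{q_j\theta})$ of $\Lambda_Q$ lie weakly below $\Lambda^*$ (superadditivity of the floor) and $\Lambda^*$ is concave, so $\Lambda_Q\le\Lambda^*$ pointwise. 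Applying Pick's theorem to the region under $\Lambda^*$ gives $2\sum_{k=1}^m\floor{k\theta}=2\op{Area}(\Lambda^*)+\floor{m\theta}+n^*-m$, where $n^*$ is the number of parts of $\pout_\theta(m)$; so the asserted inequality becomes
\[
2\bigl(\op{Area}(\Lambda^*)-\op{Area}(\Lambda_Q)\bigr)+\Bigl(\floor{m\theta}-\sum_i\floor{q_i\theta}\Bigr)\ \ge\ n-n^*.
\]
When $\Lambda_Q=\Lambda^*$ the left side is $0$ and $n\le n^*$ (each edge of $\Lambda_Q$ runs along an edge of $\Lambda^*$, whose finest lattice subdivision is $\pout_\theta(m)$), with equality precisely when $Q=\pout_\theta(m)$; so the real content is the case $\Lambda_Q\ne\Lambda^*$.

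That case is cleanest to handle by a telescoping argument which also bypasses Pick's theorem. Write the left–hand side as $\sum_i\bigl[\floor{q_i\theta}(a_i+b_i+1)-(q_i-1)\bigr]$ with $a_i=m-s_{i-1}$, $b_i=m-s_i$, and note $2\sum_{k=1}^m\floor{k\theta}=\sum_i 2\sum_{k=b_i+1}^{a_i}\floor{k\theta}$; thus it suffices to prove, for all integers $a>b\ge 0$ with $q=a-b$, the one–step inequality
\[
\floor{q\theta}\,(a+b+1)-(q-1)\ \le\ 2\sum_{k=b+1}^{a}\floor{k\theta}.
\]
Symmetrizing the right side via $k\leftrightarrow a+b+1-k$ and using $\floor{x}+\floor{y}\in\{\floor{x+y},\floor{x+y}-1\}$, with the second case occurring exactly when $\{k\theta\}>\{(a+b+1)\theta\}$, turns this into
\[
\#\{\,k\in[b+1,a]:\ \{k\theta\}<\{(a+b+1)\theta\}\,\}\ \ge\ q\{(a+b+1)\theta\}-(a+b+1)\{q\theta\}+1.
\]

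This last estimate — a sharp lower bound for the number of $\{k\theta\}$ landing in a prescribed subinterval, over a window of $k$'s of length $q$ — is the heart of the proof and the step I expect to be the main obstacle. The natural approach is an inductive descent along the continued–fraction expansion of $\theta$: for a window whose length is a convergent denominator the three–distance theorem pins down the $\{k\theta\}$ exactly, and one reduces the general case to such windows. The same analysis should show that equality in the one–step inequality holds exactly for the consecutive pairs $(m-s_{i-1},m-s_i)$ arising from $\pout_\theta(m)$, which via the telescoping yields equality in Lemma~\ref{lem:CE1} if and only if $(q_1,\dots,q_n)=\pout_\theta(m)$.
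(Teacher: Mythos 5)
Your setup is sound, and its first half closely parallels the paper's own argument: after sorting so that $\floor{q_i\theta}/q_i$ is weakly decreasing, the left-hand side is indeed twice the area under the concave lattice path $\Lambda_Q$ with edge vectors $(q_i,\floor{q_i\theta})$, and your Pick computation for the region under $\lout_\theta(m)$ is correct. But the proof is not complete: everything is made to rest on the ``one-step'' inequality
\[
\floor{q\theta}\,(a+b+1)-(q-1)\ \le\ 2\sum_{k=b+1}^{a}\floor{k\theta},
\qquad q=a-b,
\]
equivalently (after your correct symmetrization) the counting bound
$\#\{k\in[b+1,a]:\{k\theta\}<\{N\theta\}\}\ge N\floor{q\theta}-q\floor{N\theta}+1$ with $N=a+b+1$, and you do not prove it --- you explicitly flag it as ``the main obstacle'' and gesture at continued fractions and the three-distance theorem. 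This is a genuine gap, not a routine verification: the case $b=0$ is already exactly the $n=1$ case of Lemma~\ref{lem:CE1}, and for general $b$ it is a sharp inhomogeneous equidistribution statement that is attained with equality in many configurations (for instance, for $\theta=\sqrt{2}-1$, $q=5$ and every admissible $N$ with positive right-hand side the two sides are equal), so any proof must be delicate; ``an inductive descent along the continued-fraction expansion should show'' is not an argument. The equality characterization is likewise unestablished: since the one-step inequality is tight for many windows other than those coming from $\pout_\theta(m)$, deducing the ``if and only if'' statement would require an additional analysis of when \emph{all} the one-step inequalities are simultaneously equalities, which you have not supplied.

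For comparison, the paper closes the argument at exactly the point where you branch off, and does so with elementary lattice-point counting rather than equidistribution: apply Pick's theorem to the region $R$ under the sorted path $\Lambda_Q$ itself, so $2A=2L-B-2$, then bound $L\le 1+\sum_{k=1}^m(\floor{k\theta}+1)$ column by column (using only that $R$ lies under the line $y=\theta x$), with equality exactly when $\Lambda_Q$ and $\lout_\theta(m)$ have the same image, and bound $B\ge m+n+\sum_i\floor{q_i\theta}$, with equality exactly when each edge vector $(q_i,\floor{q_i\theta})$ is primitive. These two estimates give both the inequality and the equality clause at once. Since you already have the identity $\mathrm{LHS}=2A$, the most economical repair of your proof is to replace the telescoping reduction by this Pick-plus-counting step (your envelope $\Lambda^*$ is then only needed, as in the paper, to identify the equality case with $\pout_\theta(m)$); otherwise you must actually prove the one-step counting bound and its equality analysis, which I expect to be at least as hard as the lemma itself.
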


\begin{proof}
Order the $q_i$'s so that
\begin{equation}
\label{eqn:ordering}
\frac{\floor{q_1\theta}}{q_1} \ge \frac{\floor{q_2\theta}}{q_2} \ge
\cdots \ge \frac{\floor{q_n\theta}}{q_n}.
\end{equation}
Let $\Lambda$ denote the rightward-pointing polygonal path in the
plane connecting the lattice points
\[
\sum_{i=1}^{j} \left(q_i,\floor{q_i\theta}\right), \quad\quad
j=0,1,\ldots,n
\]
by line segments.  Let $R$ denote the region in the plane enclosed by
the path $\Lambda$, the horizontal line from the origin to
$(\sum_{i=1}^n q_i,0)$, and the vertical line from $(\sum_{i=1}^n
q_i,0)$ to $\sum_{i=1}^n(q_i, \floor{q_i\theta})$.  Let
$A$ denote the area of the region $R$, let $L$ denote the number of
lattice points in $R$ (including the boundary), and let $B$ denote
the number of lattice points on the boundary of $R$.

By our ordering convention \eqref{eqn:ordering}, the left side of the
inequality we want to prove is given by
\begin{equation}
\label{eqn:pick1}
\begin{split}
\sum_{i,j=1}^n\max(q_i\floor{q_j\theta},q_j\floor{q_i\theta}) & = 
\sum_{i=1}^n\floor{q_i\theta}\left(q_i +
  2\sum_{j=i+1}^nq_j\right) \\
&=2A,
\end{split}
\end{equation}
where the area of $R$ is computed by cutting it into rectangles and
triangles of height $\floor{q_i\theta}$ and base $q_j$.  On the other
hand, Pick's formula for the area of a lattice polygon tells us that
\begin{equation}
\label{eqn:pick2}
2A = 2L - B - 2.
\end{equation}
By dividing up the lattice points in $R$ into vertical lines, we find
that
\begin{equation}
\label{eqn:pick3}
L \le 1+ \sum_{k=1}^m(\floor{k\theta}+1),
\end{equation}
with equality if and only if the polygonal paths $\Lambda$ and
$\lout_\theta(m)$ have the same image.  Finally, the
number of boundary lattice points satisfies
\begin{equation}
\label{eqn:pick4}
B \ge m + n + \sum_{i=1}^n\floor{q_i\theta},
\end{equation}
with equality if and only if none of the edge vectors
$(q_i,\floor{q_i\theta})$ of the path $\Lambda$ is divisible in $\Z^2$.
The lemma follows immediately by combining
\eqref{eqn:pick1}--\eqref{eqn:pick4}.
\end{proof}  

\section{ECH index of unions and multiple covers}
\label{sec:IU}

\subsection{Statement of the result}
\label{sec:IN}

As in \S\ref{sec:IICob}, fix a symplectic cobordism $X$ with an
admissible almost complex structure $J$.  The main result of this
section is the following inequality regarding the ECH index of the
union of two holomorphic curves.

\begin{theorem}
\label{thm:IU}
If $C$ and $C'$ are holomorphic curves in $X$, then
\[
I(C\cup C') \ge I(C) + I(C') + 2C\cdot C'.
\]
Here $C\cdot C'$ is an ``intersection number'' of $C$ and $C'$
defined below.
\end{theorem}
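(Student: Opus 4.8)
The plan is to reduce the inequality, via the algebraic structure of the ECH index, to bounds on the asymptotic linking of $C$ and $C'$ at their common Reeb orbits, and then to establish those bounds by an analytic estimate together with a combinatorial lemma, exactly as in the proof of Theorem~\ref{thm:indI}.

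First I would unwind the definitions. Suppose $C$ has positive ends at an orbit set $\alpha^+$ and negative ends at $\alpha^-$, and $C'$ has positive ends at $\beta^+$ and negative ends at $\beta^-$. Then $C\cup C'$ has positive ends at the product orbit set $\alpha^+\beta^+$ and negative ends at $\alpha^-\beta^-$, and $[C\cup C']=[C]+[C']$. Since $c_\tau$ is additive, $Q_\tau$ is quadratic in the sense of \eqref{eqn:quadratic}, and the extra Conley--Zehnder terms arise only from combining multiplicities at shared orbits, one obtains
\[
I(C\cup C') - I(C) - I(C') = 2\,Q_\tau(C,C') + \sum_{\gamma\subset Y_+}\delta^+_\gamma - \sum_{\gamma\subset Y_-}\delta^-_\gamma,
\]
where, if a Reeb orbit $\gamma$ occurs with multiplicity $m$ in $\alpha^+$ and $m'$ in $\beta^+$ (and $\delta^+_\gamma=0$ if $\gamma$ is not shared),
\[
\delta^+_\gamma \eqdef \sum_{k=1}^{m'}\left(\op{CZ}_\tau(\gamma^{m+k}) - \op{CZ}_\tau(\gamma^{k})\right),
\]
and $\delta^-_\gamma$ is defined the same way at the negative ends. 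Now recall that $Q_\tau(C,C') = \#(\dot C\cap\dot C') - \ell_\tau(C,C')$, where the asymptotic linking $\ell_\tau(C,C')=\sum_i\ell_\tau(\zeta^+_i,{\zeta^+_i}')-\sum_j\ell_\tau(\zeta^-_j,{\zeta^-_j}')$ records the linking of the braids cut out by the two curves near their ends. When $C$ and $C'$ have no irreducible component in common, the interior intersection count $\#(\dot C\cap\dot C')$ is finite and nonnegative by positivity of intersections (Siefring's asymptotics prevent the intersections from accumulating at the ends), and this count is exactly what will be taken to be $C\cdot C'$ in \S\ref{sec:IN}; substituting, the theorem becomes the assertion that $\sum_{\gamma\subset Y_+}\delta^+_\gamma - \sum_{\gamma\subset Y_-}\delta^-_\gamma \ge 2\,\ell_\tau(C,C')$.

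Because $\ell_\tau(C,C')$ and the $\delta$'s split orbit by orbit, this reduces to a single-orbit estimate, and it suffices to show that $2\,\ell_\tau(\zeta,\zeta')\le\delta^+_\gamma$ at each shared positive orbit (the negative ends give the reversed inequality, which follows by running the same argument with $\theta$ replaced by $-\theta$, using $\pin_\theta=\pout_{-\theta}$). This estimate I would prove in two steps, parallel to Lemmas~\ref{lem:writheBound} and~\ref{lem:CE0}. In the analytic step, if $\zeta$ has components $\zeta_a$ of multiplicity $p_a$ and $\zeta'$ has components $\zeta'_b$ of multiplicity $p'_b$, then Siefring's relative asymptotic analysis \cite{siefring1} gives $\ell_\tau(\zeta_a,\zeta'_b)\le\max(p_a\rho'_b,\,p'_b\rho_a)$ with $\rho_a\eqdef\floor{\op{CZ}_\tau(\gamma^{p_a})/2}$, exactly as in Lemma~\ref{lem:linkingBound}; the only new feature is that the two families of ends now belong to different curves, so when an end of $C$ and an end of $C'$ have the same multiplicity and the same leading asymptotic eigenfunction one must pass to the next order in the asymptotic expansion (and when the two ends literally coincide this bound degrades, which is precisely how $C\cdot C$ can become negative and is absorbed into the definition of the intersection number). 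In the combinatorial step, summing over $a,b$ leaves one to prove $2\sum_{a,b}\max(p_a\rho'_b,\,p'_b\rho_a)\le\delta^+_\gamma$, which after choosing the standard trivialization reduces in the elliptic case (the hyperbolic cases being elementary, as before) to
\[
\sum_{a,b}\max\!\left(p_a\floor{p'_b\theta},\,p'_b\floor{p_a\theta}\right)\ \le\ \sum_{k=1}^{m'}\left(\floor{(m+k)\theta}-\floor{k\theta}\right);
\]
this is a two-partition cousin of Lemma~\ref{lem:CE1}, and I would prove it by the same Pick's-theorem lattice-point count, comparing the polygonal path assembled from the edge vectors $(p_a,\floor{p_a\theta})$ and $(p'_b,\floor{p'_b\theta})$ to the extremal path $\lout_\theta$.

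The general case, in which $C$ and $C'$ share some irreducible components, requires the refined definition of $C\cdot C'$ from \S\ref{sec:IN}: one reduces to a pair of curves with no common components (accounting for the shared, possibly multiply covered ends using the degenerate linking data above) in such a way that $C\cdot C'$ is still bounded above by the corresponding interior intersection count, while $I(C\cup C')$, $I(C)$, $I(C')$ are unchanged, being topological. I expect this to be the main obstacle: arranging the intersection number of shared and multiply covered ends so that it is finite, transforms correctly under this reduction, and keeps the inequality sharp --- this is the genuinely new phenomenon in the theorem, and the most delicate asymptotic input (Siefring's relative asymptotic operators rather than bare positivity of intersections) enters precisely here. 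The combinatorial lemma, although new, should be routine given the method already developed for Lemma~\ref{lem:CE1}.
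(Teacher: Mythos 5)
Your treatment of the case where $C$ and $C'$ have no common irreducible components is essentially the paper's argument: additivity of $c_\tau$ and quadraticity of $Q_\tau$ reduce the claim to an orbit-by-orbit estimate comparing twice the asymptotic linking with the difference of Conley--Zehnder sums, the analytic input is Siefring's relative asymptotics as in Lemma~\ref{lem:linkingBound}, and your two-partition inequality $\sum_{a,b}\max(p_a\floor{p'_b\theta},p'_b\floor{p_a\theta})\le\sum_{k=1}^{m'}(\floor{(m+k)\theta}-\floor{k\theta})$ is exactly the elliptic case of the paper's Lemma~\ref{lem:CLI} (which the paper proves by a short induction on $n+n'$ rather than by Pick's theorem, but the target is the same). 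One small caveat even in this case: if $C$ is itself a multiple cover of a simple curve not appearing in $C'$, the ``braid of $C$'' is not an embedded braid, so one should phrase everything in terms of the underlying simple curves $C_a$ with weights $d_ad_b'$, which is what the paper does.

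The genuine gap is your handling of common components and multiple covers, which is the heart of the theorem. There is no reduction to the no-common-component case, and your plan to arrange that ``$C\cdot C'$ is still bounded above by the corresponding interior intersection count'' cannot work, because the diagonal terms are not geometric intersection counts at all: for a simple irreducible curve the paper \emph{defines} $C_a\cdot C_a=\frac{1}{2}\bigl(2g(C_a)-2+\op{ind}(C_a)+h(C_a)+4\delta(C_a)\bigr)$, which can be negative. The paper's proof handles covers purely topologically: writing $C$ and $C'$ as unions of $d_a$- and $d_a'$-fold covers of distinct simple curves $C_a$, the quantities $I$, $c_\tau$, $Q_\tau$ depend only on relative homology classes, so the cross terms are $d_ad_b'Q_\tau(C_a,C_b)=d_ad_b'(C_a\cdot C_b-\ell_\tau(C_a,C_b))$ for $a\neq b$, while each diagonal term is converted, by combining the relative adjunction formula \eqref{eqn:RAF} with the index formula \eqref{eqn:ind}, into $Q_\tau(C_a)=C_a\cdot C_a+\frac{1}{2}\bigl(e(C_a)-\mu_\tau^0(C_a)-2w_\tau(C_a)\bigr)$; this identity is precisely why $C\cdot C$ is defined the way it is, and it is the step missing from your outline. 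The resulting orbit-by-orbit inequality \eqref{eqn:huge} then requires not only the linking bound but also the writhe bounds (Lemmas~\ref{lem:writheBound} and \ref{lem:UWB}) for the diagonal terms, and the cover multiplicities are absorbed combinatorially by applying Lemma~\ref{lem:CLI} to the list of end multiplicities of $C_a$ repeated $d_a$ (respectively $d_a'$) times. In particular, no asymptotic analysis beyond what is already needed for the simple curves enters: the ``next-order expansions for coinciding ends'' and a refined intersection theory of multiply covered ends, which you single out as the main obstacle, play no role in the paper's proof.
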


\noindent
Note that $C$ and $C'$ are not assumed to be simple, irreducible, or
distinct.

\begin{definition}
  If $C$ and $C'$ are simple curves in $X$ with no irreducible
  component in common, define $C\cdot C'\in\Z$ to be the algebraic
  count of intersections of $C$ and $C'$.  This is well-defined,
  because it follows from \cite[Cor. 2.5]{siefring1} that there are
  only finitely many intersections.  Also each intersection counts
  positively \cite{mcds}.
\end{definition}

Now for the slightly nonstandard part of the definition.

\begin{definition}
If $C$ is a simple, irreducible holomorphic curve in $X$, define
\[
C\cdot C \eqdef \frac{1}{2}\big(2g(C) - 2 + \op{ind}(C) + h(C) +
  4\delta(C)\big) \in \frac{1}{2}\Z.
\]
Here $g(C)$ denotes the genus of $C$, and $h(C)$ denotes the number of
ends of $C$ at hyperbolic Reeb orbits.
\end{definition}

\begin{example}
  If $C$ is closed, then it follows from \eqref{eqn:closedAdjunction}
  and \eqref{eqn:ind} that $C\cdot C$ equals the usual homological
  intersection number $[C]\cdot [C]$.
\end{example}

\begin{remark}
  If $J$ is generic so that $\op{ind}(C)\ge 0$, then clearly $C\cdot C
  \ge -1$.  In a symplectization the situation is better (without any
  genericity assumption on $J$):

\begin{proposition}
\label{prop:CdotC}
If $X$ is a symplectization, if $C$ is a simple, irreducible
holomorphic curve in $X$, and if $C$ is not a cylinder
$\R\times\gamma$ where $\gamma$ is an elliptic Reeb orbit, then
$C\cdot C \ge 0$.
\end{proposition}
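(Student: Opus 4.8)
The plan is to use the definition of $C\cdot C$ together with the fact that in a symplectization, a simple irreducible holomorphic curve which is not a trivial cylinder must have at least one positive end and at least one negative end, or else be a plane, and then argue that its Fredholm index and topological complexity are large enough. Concretely, from the definition
\[
C\cdot C = \frac{1}{2}\big(2g(C) - 2 + \op{ind}(C) + h(C) + 4\delta(C)\big),
\]
so since $g(C)\ge 0$ and $\delta(C)\ge 0$, it suffices to show that $\op{ind}(C) + h(C) \ge 2$ whenever $C$ is not an elliptic trivial cylinder. This reduces the proposition to an estimate on the Fredholm index, which I would extract from the index formula \eqref{eqn:ind} combined with the explicit Conley-Zehnder formulas \eqref{eqn:CZHyp} and \eqref{eqn:CZEll}.

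First I would recall that in a symplectization $X = \R\times Y$ with an $\R$-invariant $J$, any irreducible holomorphic curve is either a trivial cylinder $\R\times\gamma$, or has image not equal to any trivial cylinder, in which case Siefring's asymptotics and intersection positivity with the trivial cylinders apply. If $C$ is a trivial cylinder $\R\times\gamma$ with $\gamma$ hyperbolic, then $g(C)=0$, $\delta(C)=0$, $h(C)=2$, and $\op{ind}(C)=0$ (it is a rigid object modulo the $\R$-action and reparametrization), so $C\cdot C = \frac{1}{2}(-2 + 0 + 2 + 0) = 0$; this handles the hyperbolic trivial cylinder case and shows why the elliptic case must be excluded (there $h(C)=0$ and $\op{ind}(C)=0$, giving $C\cdot C = -1$). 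For a nontrivial $C$, I would split into cases according to whether $C$ has ends at elliptic or hyperbolic orbits. If $C$ has at least two ends at hyperbolic orbits then $h(C)\ge 2$ and we are done regardless of the index (using $\op{ind}(C)\ge ?$ — here one needs that $\op{ind}(C)\ge 0$, which is \emph{not} automatic without genericity, so this naive split is insufficient and the real argument must bound $\op{ind}(C)+h(C)$ directly).

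The cleaner approach, which I expect to be the actual route, is to write $\op{ind}(C) + h(C)$ in terms of the data of $C$ using the index formula and the relative adjunction ingredients, and then apply the writhe/winding estimates of \S\ref{sec:IICob} — in particular the analytic winding bounds behind Lemma~\ref{lem:WBE} and Lemma~\ref{lem:CE0} — but now in the form of \emph{lower} bounds rather than upper bounds, using that $C$ is embedded up to finitely many singularities and that the leading asymptotic eigenvalues at each end have winding numbers controlled by the Conley-Zehnder index. Specifically, for each end at an elliptic orbit $\gamma^q$, the contribution $\op{CZ}_\tau(\gamma^q) = 2\lfloor q\theta\rfloor + 1$ is odd and the associated asymptotic winding number is either $\lfloor q\theta\rfloor$ or $\lceil q\theta\rceil$, and combining this over all ends with the relative adjunction formula \eqref{eqn:RAF} and the Fredholm index formula \eqref{eqn:ind} gives, after the ``$\mu$ vs $\mu^0$'' bookkeeping, an expression for $2C\cdot C$ that manifestly is $\ge -$(number of elliptic trivial-cylinder components present), hence $\ge 0$ once such components are excluded. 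I would model this computation on the proof of Proposition~\ref{prop:JBound} / the discussion around $J_0$, since $C\cdot C$ is built from the same pieces.

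The main obstacle, and where I would spend the most care, is precisely the fact that $J$ is \emph{not} assumed generic, so $\op{ind}(C)$ can be negative and one cannot simply quote ``$\op{ind}(C)\ge 0$''. The resolution is that the definition of $C\cdot C$ already incorporates $\op{ind}(C)$, $g(C)$, $h(C)$, and $\delta(C)$ in exactly the combination that appears in the relative adjunction formula, so that $2C\cdot C$ equals a purely topological-plus-asymptotic quantity — essentially $c_\tau(C) - w_\tau(C) - (\text{negative-end contributions})$ rearranged — which can be bounded below using intersection positivity of $C$ with the trivial cylinders over its own asymptotic Reeb orbits together with the sharp winding-number inequalities from \cite{hwz2} and \cite{siefring1}. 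I would carry this out by: (1) expanding $2C\cdot C$ via the definition; (2) substituting \eqref{eqn:RAF} to eliminate $\chi(C)$ and $\delta(C)$ in favor of $c_\tau(C)$, $Q_\tau(C)$, $w_\tau(C)$; (3) substituting \eqref{eqn:ind} to eliminate $\op{ind}(C)$; (4) observing the resulting expression is a sum over the ends of $C$ of terms of the form (Conley-Zehnder contribution) $-$ (twice a winding number), each of which is $\ge 0$ at a hyperbolic end and $\ge -1$ only at elliptic ends in a way that is compensated unless $C$ is itself a single elliptic trivial cylinder; and (5) concluding $C\cdot C\ge 0$. The delicate point throughout is keeping track of the trivialization-dependence, which cancels exactly as in the well-definedness arguments for $I$ and $\op{ind}$.
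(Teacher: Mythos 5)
Your handling of the trivial cylinders is fine (hyperbolic gives $C\cdot C=0$, elliptic gives $-1$), and you correctly identify that the whole difficulty is the absence of genericity, so that $\op{ind}(C)\ge 0$ cannot be quoted. But the main case has a genuine gap. First, your proposed reduction ``it suffices to show $\op{ind}(C)+h(C)\ge 2$'' discards the genus term and is stronger than what is true: the correct statement, and the one the paper actually invokes, is $2g(C)-2+\op{ind}(C)+h_+(C)\ge 0$ for any simple irreducible curve in a symplectization that is not a trivial cylinder (here $h_+$ counts only ends at positive hyperbolic orbits), cf.\ \cite{hwz2} and \cite[Prop.\ 4.1]{wendl1}; together with $h\ge h_+$, $\delta\ge 0$ this immediately gives $C\cdot C\ge 0$. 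Second, and more importantly, your ``cleaner approach'' does not close. If you expand $2C\cdot C$ and eliminate $\chi$, $\delta$, $\op{ind}$ using \eqref{eqn:RAF} and \eqref{eqn:ind}, what remains is
\begin{equation*}
2\,C\cdot C \;=\; 2Q_\tau(C) + 2w_\tau(C) + \mu^0_\tau(C) - e(C),
\end{equation*}
where $e(C)$ is the number of elliptic ends (this is exactly the identity used in the proof of Theorem~\ref{thm:IU}). This is not a sum of end-local terms: it contains the global quantities $Q_\tau(C)$ and $w_\tau(C)$, and the analytic estimates of \S\ref{sec:IICob} (Lemma~\ref{lem:WBE}, Lemma~\ref{lem:linkingBound}) are \emph{upper} bounds on writhes and linking numbers, which bound this expression from above, not below. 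Likewise, positivity of intersections of $C$ with the trivial cylinders over its asymptotic orbits controls winding numbers of the ends relative to those cylinders (this is the mechanism behind \eqref{eqn:trivCyl}), but it gives no lower bound on $Q_\tau(C)+w_\tau(C)$.

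The missing idea is the one the paper's proof rests on: positivity of intersections of $C$ with its own translates in the $\R$ direction, in linearized form. Because $J$ is $\R$-invariant and $C$ is not a trivial cylinder, the normal component of $\partial_s$ is a nontrivial solution of the linearized equation; its zeros count positively, and their count is controlled at the ends by the winding estimates of \cite{hwz2} (winding $\le\floor{\op{CZ}/2}$ at positive ends, $\ge\ceil{\op{CZ}/2}$ at negative ends), which is precisely what yields $2g-2+\op{ind}+h_+\ge 0$ without any genericity hypothesis. You cite the right winding inequalities but apply them to the braids of $C$ itself rather than to this kernel element (equivalently, to the comparison of $C$ with its $\R$-translates), and without that input the inequality cannot be recovered from adjunction and the index formula alone. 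The fix is either to quote the cited inequality, as the paper does, or to run the translate/normal-bundle argument explicitly.
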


\begin{proof}
  If $C$ is a cylinder $\R\times\gamma$ where $\gamma$ is a hyperbolic
  Reeb orbit then it follows immediately from the definition that
  $C\cdot C=0$.  If $C$ is not a cylinder $\R\times\gamma$, then a
  stronger inequality than $C\cdot C\ge 0$ is known.  Namely, if
  $h_+(C)$ denotes the number of ends of $C$ at {\em positive\/}
  hyperbolic orbits only, then
\[
2g(C) - 2 + \op{ind}(C) + h_+(C) \ge 0,
\]
cf.\ \cite{hwz2} and \cite[Prop. 4.1]{wendl1}. 
The proof of this inequality uses a linearized version of positivity of
intersections of $C$ with its translates in the $\R$ direction.
\end{proof}
\end{remark}

\begin{definition}
If $C$ is a union of $d_a$-fold covers of distinct simple irreducible
curves $C_a$, and if $C'$ is a union of $d_b'$-fold covers of distinct
simple irreducible curves $C_b'$, then
\[
C \cdot C' \eqdef \sum_a\sum_bd_ad_b'C_a\cdot C_b'.
\]
\end{definition}

This completes the statement of Theorem~\ref{thm:IU}.

\begin{example}
  The inequality \eqref{eqn:trivCyl} is a special case of
  Theorem~\ref{thm:IU} in which $X$ is a symplectization and $C'$ is a
  union of $\R$-invariant cylinders and the image of $C$ contains no
  $\R$-invariant cylinder.  This was proved in \cite[Prop. 7.1]{pfh2}.
  In this case one also obtains necessary conditions for equality in
  terms of the incoming and outgoing partitions; for more about these
  conditions see \cite[Lem. 7.28]{obg1}.
\end{example}

\begin{remark}
  The proof of Theorem~\ref{thm:IU} will show that in some cases one
  can obtain a stronger inequality, in which $C\cdot C'$ is replaced
  by a more elaborate notion of intersection number involving
  additional contributions from the ends.  Some related notions of
  intersection number are discussed
  in \cite[\S4]{wendl2}, using work of Siefring \cite{su}.
\end{remark}

\subsection{Proof of Theorem~\ref{thm:IU}}

The proof of Theorem~\ref{thm:IU} will use the following combinatorial lemma.

\begin{lemma}
\label{lem:CLI}
Let $\gamma$ be a Reeb orbit, let $q_1,\ldots,q_n$ be
positive integers with $m\eqdef \sum_{i=1}^n q_i$, let
$q_1',\ldots,q'_{n'}$ be positive integers with $m'\eqdef
\sum_{j=1}^{n'}q_j'$, and let
$\rho_i\eqdef\floor{\op{CZ}_\tau(\gamma^{q_i})/2}$ and
$\rho_j'\eqdef\floor{\op{CZ}_\tau(\gamma^{q'_j})/2}$. Then
\begin{equation}
\label{eqn:clii}
2\sum_{i=1}^n\sum_{j=1}^{n'}\max(q_i\rho'_j,q_j'\rho_i) \le
\left(\sum_{k=1}^{m+m'} - \sum_{k=1}^m -
  \sum_{k=1}^{m'}\right)\op{CZ}_\tau(\gamma^k).
\end{equation}
\end{lemma}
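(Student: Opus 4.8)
The plan is to reduce Lemma~\ref{lem:CLI} to the already-proven combinatorial estimates by treating the three cases of Reeb orbit separately, exactly as in the proof of Lemma~\ref{lem:CE0}. When $\gamma$ is positive hyperbolic, I would choose a trivialization $\tau$ with $\op{CZ}_\tau(\gamma^k)=0$ for all $k$ (using \eqref{eqn:CZHyp}); then $\rho_i=\rho_j'=0$, and both sides of \eqref{eqn:clii} vanish, so the inequality is trivially an equality. When $\gamma$ is negative hyperbolic, I would choose $\tau$ so that $\op{CZ}_\tau(\gamma^k)=k$, hence $\rho_i=\floor{q_i/2}$ and $\rho_j'=\floor{q_j'/2}$; then the right-hand side of \eqref{eqn:clii} equals $(m+m') + 2mm' - m - m' = 2mm'$ after using $\sum_{k=1}^{N}k=\binom{N+1}{2}$ and simplifying, while the left-hand side is $2\sum_{i,j}\max(q_i\floor{q_j'/2},q_j'\floor{q_i/2})\le 2\sum_{i,j}q_iq_j' = 2mm'$, using $\floor{x/2}\le x/2$ for nonnegative integers. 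So the hyperbolic cases are routine bookkeeping.

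The substance is the elliptic case. If $\gamma$ is elliptic with monodromy angle $\theta$, then by \eqref{eqn:CZEll} we have $\op{CZ}_\tau(\gamma^k) = 2\floor{k\theta}+1$, so $\rho_i = \floor{q_i\theta}$ and $\rho_j'=\floor{q_j'\theta}$; substituting and cancelling the constant terms, \eqref{eqn:clii} becomes
\[
2\sum_{i=1}^n\sum_{j=1}^{n'}\max\bigl(q_i\floor{q_j'\theta},\,q_j'\floor{q_i\theta}\bigr) \le 2\left(\sum_{k=1}^{m+m'}\floor{k\theta} - \sum_{k=1}^m\floor{k\theta} - \sum_{k=1}^{m'}\floor{k\theta}\right),
\]
i.e.\ exactly the ``cross term'' version of Lemma~\ref{lem:CE1}. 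The cleanest way to obtain this is to apply Lemma~\ref{lem:CE1} to the combined list $(q_1,\ldots,q_n,q_1',\ldots,q'_{n'})$, which gives a bound on $\sum$ over all pairs (including the $ii'$, $jj'$ diagonal blocks) by $2\sum_{k=1}^{m+m'}\floor{k\theta} - \sum\floor{q_i\theta}-\sum\floor{q_j'\theta} + (m+m') - (n+n')$, then subtract the two instances of Lemma~\ref{lem:CE1} applied to $(q_1,\ldots,q_n)$ alone and to $(q_1',\ldots,q'_{n'})$ alone. The terms $-\sum\floor{q_i\theta}$, $-\sum\floor{q_j'\theta}$, $m$, $m'$, $n$, $n'$ all cancel in the subtraction, leaving precisely the desired cross-term inequality (here one must be careful that Lemma~\ref{lem:CE1}'s statement is an inequality in each instance, so the combination is valid as a one-directional bound, which is all that is claimed).

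The main obstacle I anticipate is bookkeeping rather than conceptual: one must verify that the arithmetic of the constant and linear-in-$q$ terms really does cancel under the ``combined list minus two sublists'' subtraction, and that the $\max$ terms split correctly into diagonal blocks plus the single cross block of size $2\sum_{i,j}\max(\cdots)$. An alternative, more self-contained route is to run the Pick's-formula lattice-path argument of Lemma~\ref{lem:CE1} directly on the concatenated path built from the $(q_i,\floor{q_i\theta})$ edges followed by the $(q_j',\floor{q_j'\theta})$ edges, after reordering all $n+n'$ edges by decreasing slope; the cross term $2\sum_{i,j}\max(\cdots)$ is then the area of the ``mixed'' rectangles, bounded using $L \le 1+\sum_{k=1}^{m+m'}(\floor{k\theta}+1)$ as before. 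Either way, no new analytic input is needed, since Lemma~\ref{lem:CLI} is purely combinatorial; the work is to package the already-established Lemma~\ref{lem:CE1} (equivalently \eqref{eqn:pick1}--\eqref{eqn:pick4}) into the bilinear form stated here.
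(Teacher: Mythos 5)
Your hyperbolic cases are essentially the paper's argument (the paper treats them uniformly by writing $\op{CZ}_\tau(\gamma^k)=kl$ and using $\max(q_i\floor{lq_j'/2},q_j'\floor{lq_i/2})\le lq_iq_j'/2$), though note a factor-of-two slip in your negative hyperbolic computation: with $\op{CZ}_\tau(\gamma^k)=k$ the right side of \eqref{eqn:clii} is $mm'$, not $2mm'$, and your own estimate $\floor{x/2}\le x/2$ bounds the left side by $mm'$, not $2mm'$; the two slips happen to compensate, so this is harmless once corrected. The genuine problem is in the elliptic case, which is the heart of the lemma. You propose to apply Lemma~\ref{lem:CE1} to the concatenated list and then \emph{subtract} the two instances of Lemma~\ref{lem:CE1} for the sublists. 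All three statements are inequalities pointing in the same direction, so this subtraction is not a valid deduction. Writing $L_{12}\le R_{12}$, $L_1\le R_1$, $L_2\le R_2$ with $L_{12}=L_1+L_2+2\,\mathrm{Cross}$ (where $\mathrm{Cross}$ denotes the left side of \eqref{eqn:cliii}), the only thing you can conclude is $2\,\mathrm{Cross}\le R_{12}-L_1-L_2=(R_{12}-R_1-R_2)+(R_1-L_1)+(R_2-L_2)$, i.e.\ the desired bound \emph{plus} the slack in the two diagonal instances of Lemma~\ref{lem:CE1}. That slack is strictly positive unless $(q_i)=\pout_\theta(m)$ and $(q_j')=\pout_\theta(m')$, so in general your argument only proves a strictly weaker bound. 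For instance, with $\theta$ slightly above $1/2$, $(q_i)=(1,1)$, $(q_j')=(1)$, the valid deduction gives $\mathrm{Cross}\le 2$, whereas \eqref{eqn:cliii} asserts $\mathrm{Cross}\le 1$. Your parenthetical claim that ``the combination is valid as a one-directional bound'' is precisely where the argument breaks. The alternative route you sketch, running the Pick's-formula argument on the merged path, is the same computation in disguise: the bounds \eqref{eqn:pick3} and \eqref{eqn:pick4} control the total quantity $L_1+L_2+2\,\mathrm{Cross}$ (the area of the whole region), not the mixed rectangles alone, so it inherits the same defect.

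For comparison, the paper does not deduce \eqref{eqn:cliii} from Lemma~\ref{lem:CE1} at all; it proves it directly by induction on $n+n'$: choose the part $q_n$ whose slope $q_n^{-1}\floor{q_n\theta}$ is smallest among all $q_i,q_j'$, and use
\begin{equation*}
\sum_{j=1}^{n'}\max(q_n\floor{q_j'\theta},q_j'\floor{q_n\theta})
= q_n\sum_{j=1}^{n'}\floor{q_j'\theta}
\le q_n\floor{m'\theta}
\le \sum_{k=m-q_n+1}^{m}\left(\floor{(k+m')\theta}-\floor{k\theta}\right),
\end{equation*}
which removes $q_n$ and reduces to the same statement with one fewer part. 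Some direct argument of this kind (or a genuinely new idea) is needed; the bilinear inequality does not follow formally from the quadratic one in Lemma~\ref{lem:CE1}.
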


\begin{proof}
We consider two cases.

Case 1: If $\gamma$ is hyperbolic, then by equation \eqref{eqn:CZHyp},
there is an integer $l$ such that $\op{CZ}_\tau(\gamma^k)=kl$ for all
$k$.  Then the inequality \eqref{eqn:clii} is equivalent to
\[
2\sum_{i=1}^n \sum_{j=1}^{n'} \max\left(q_i\floor{\frac{lq_j'}{2}},
  q_j'\floor{\frac{lq_i}{2}}\right) \le lmm'.
\]
But this inequality is obvious because
\begin{equation}
\label{eqn:obviousInequality}
 \max\left(q_i\floor{\frac{lq_j'}{2}},
  q_j'\floor{\frac{lq_i}{2}}\right)
\le \frac{lq_iq_j'}{2}.
\end{equation}

Case 2: If $\gamma$ is elliptic with mondromy angle $\theta$, then by
equation \eqref{eqn:CZEll}, the inequality \eqref{eqn:clii} is
equivalent to
\begin{equation}
\label{eqn:cliii}
\sum_{i=1}^n \sum_{j=1}^{n'} \max(q_i\floor{q_j'\theta},
q_j'\floor{q_i\theta}) \le \left(\sum_{k=1}^{m+m'} - \sum_{k=1}^m -
  \sum_{k=1}^{m'}\right)\floor{k\theta}.
\end{equation}
We prove this by induction on $n+n'$.  Without loss of generality,
$n\ge 1$, and $q_n^{-1}\floor{q_n\theta}$ is the smallest number in
the set $\{q_i^{-1}\floor{q_i\theta}\} \cup
\{{q_j'}^{-1}\floor{q_j'\theta}\}$.  Then
\begin{equation}
\label{eqn:inductiveStep}
\begin{split}
\sum_{j=1}^{n'}\max(q_n\floor{q_j'\theta},q_j'\floor{q_n\theta}) &=
q_n\sum_{j=1}^{n'}\floor{q_j'\theta}\\
&\le q_n\floor{m'\theta}\\
&\le \sum_{k=m-q_n+1}^{m}(\floor{(k+m')\theta} - \floor{k\theta}).
\end{split}
\end{equation}
This reduces \eqref{eqn:cliii} to the corresponding inequality with
$n$ decreased by $1$ and $q_n$ removed, so we are done by induction.
\end{proof}

\begin{proof}[Proof of Theorem~\ref{thm:IU}.]

Let $C_1,\ldots,C_r$ denote the distinct irreducible simple curves
that appear in the image of either $C$ or $C'$.  Thus $C$ consists of
a $d_a$-fold cover of $C_a$ for each $a=1,\ldots,r$, and $C'$ consists
of a $d_a'$-fold cover of $C_a$ for each $a$, for some $d_a,d_a'\ge
0$.

To prove the theorem, we begin by reducing to a local statement around
each Reeb orbit.  Since $c_\tau$ is additive, and $Q_\tau$ is
quadratic as in \eqref{eqn:quadratic}, we have
\[
I(C) = \sum_{a=1}^r d_ac_\tau(C_a) +
\sum_{a,b=1}^rd_ad_bQ_\tau(C_a,C_b) + \mu_\tau(C).
\]
Adding the analogous equation for $I(C')$, and subtracting the result
from the analogous equation for $I(C\cup C')$, we obtain
\[
I(C\cup C') - I(C) - I(C') = 2\sum_{a,b=1}^rd_ad_b'Q_\tau(C_a,C_b) +
\mu_\tau(C\cup C') - \mu_\tau(C) - \mu_\tau(C').
\]
By the definition of $Q$, if $a\neq b$ then
\[
Q_\tau(C_a,C_b) = C_a\cdot C_b - \ell_\tau(C_a,C_b).
\]
On the other hand, by the relative adjunction formula \eqref{eqn:RAF}
and the index formula \eqref{eqn:ind}, we have
\[
Q_\tau(C_a) = C_a\cdot C_a + \frac{1}{2}\left(e(C_a) -
  \mu_\tau^0(C_a) - 2w_\tau(C_a)\right),
\]
where $e(C_a)$ denotes the number of ends of $C_a$ at elliptic Reeb
orbits.  Putting this all together gives
\begin{equation}
\label{eqn:PTAT}
\begin{split}
I(C\cup C') - I(C) - I(C') - 2 C\cdot C' &= \mu_\tau(C\cup C') -
\mu_\tau(C) - \mu_\tau(C')\\
&\;\,-2\sum_{a\neq b}d_ad_b'\ell_\tau(C_a,C_b) \\
&\;\,+\sum_{a=1}^rd_ad_a'(e(C_a) -\mu_\tau^0(C_a) - 2w_\tau(C_a)).
\end{split}
\end{equation}
We need to prove that the right side of this equation is nonnegative.

Let $\gamma$ be a Reeb orbit in $Y_+$ at which some of the curves
$C_a$ have positive ends.  For $a=1,\ldots,r$, let $n_a\ge 0$ denote
the number of positive ends of $C_a$ at $\gamma$, let
$q_{a,1},\ldots,q_{a,n_a}$ denote the multiplicities of these ends,
let $m_a\eqdef \sum_{i=1}^{n_a}q_{a,i}$, and let $\zeta_a$ denote the
braid around $\gamma$ corresponding to these ends. Also let $M\eqdef
\sum_{a=1}^r d_am_a$ and $M'\eqdef \sum_{a=1}^r d_a'm_a$. Define $\varepsilon$
to equal $1$ if $\gamma$ is elliptic and $0$ if $\gamma$ is
hyperbolic.  Then it is enough to show that
\begin{equation}
\label{eqn:huge}
\begin{split}
\left(\sum_{k=1}^{M+M'} - \sum_{k=1}^M -
  \sum_{k=1}^{M'}\right)\op{CZ}_\tau(\gamma^k) & \ge  2 \sum_{a\neq
  b}d_ad_b'\ell_\tau(\zeta_a,\zeta_b) \\
+ & \sum_{a=1}^rd_ad_a'\left(-\varepsilon n_a
  + \sum_{i=1}^{n_a}\op{CZ}_\tau(\gamma^{q_{a,i}}) +
  2w_\tau(\zeta_a)\right).
\end{split}
\end{equation}
(We also need to prove an analogous inequality for the negative ends,
but this is completely symmetric.)  To prove the inequality
\eqref{eqn:huge}, we consider two cases.

Case 1: Suppose $\gamma$ is elliptic with monodromy angle $\theta$.
As usual, write
$\rho_{a,i}\eqdef\floor{\op{CZ}_\tau(\gamma^{q_{a,i}})/2}$.  For
$a,b=1,\ldots,r$, introduce the notation
\[
f(a,b) \eqdef \sum_{i=1}^{n_a}\sum_{j=1}^{n_b}
\max(q_{a,i}\rho_{b,j}, q_{b,j}\rho_{a,i})
\]
By
Lemma~\ref{lem:linkingBound}, for $a\neq b$ we have
\[
\ell_\tau(\zeta_a,\zeta_b) \le f(a,b).
\]
Since $\op{CZ}_\tau(\gamma^k)$ is odd, Lemma~\ref{lem:writheBound}
implies that
\[
2w_\tau(\zeta_a) \le 2 f(a,a) -
\sum_{i=1}^{n_a}\op{CZ}_\tau(\gamma^{q_{a,i}}) + n_a.
\]
So to prove \eqref{eqn:huge} in this case, it is enough to show that
\[
2\sum_{a,b=1}^rd_ad_b'f(a,b) \le \left(\sum_{k=1}^{M+M'} - \sum_{k=1}^M -
  \sum_{k=1}^{M'}\right)\op{CZ}_\tau(\gamma^k).
\]
But this follows by applying Lemma~\ref{lem:CLI} to the list
consisting of the numbers $q_{a,i}$ repeated $d_a$ times, and the list
consisting of the numbers $q_{a,i}$ repeated $d_a'$ times.

Case 2: Suppose $\gamma$ is hyperbolic.  For $a\neq b$, by
Lemmas~\ref{lem:linkingBound} and \ref{lem:CLI} we have
\[
2\ell_\tau(\zeta_a,\zeta_b) \le \left(\sum_{k=1}^{m_a+m_b} -
  \sum_{k=1}^{m_a} - \sum_{k=1}^{m_b}\right) \op{CZ}_\tau(\gamma^k).
\]
And by Lemma~\ref{lem:UWB}, we have
\[
w_\tau(\zeta_a) \le \sum_{k=1}^{m_a}\op{CZ}_\tau(\gamma^k) -
\sum_{i=1}^{n_a}\op{CZ}_\tau(\gamma^{q_{a,i}}).
\]
So to prove the inequality \eqref{eqn:huge} in this case, it is enough
to show that
\[
\begin{split}
\left(\sum_{k=1}^{M+M'} - \sum_{k=1}^M -
  \sum_{k=1}^{M'}\right)\op{CZ}_\tau(\gamma^k) & \ge
\sum_{a\neq
  b}d_ad_b' \left(\sum_{k=1}^{m_a+m_b} -
  \sum_{k=1}^{m_a} - \sum_{k=1}^{m_b}\right) \op{CZ}_\tau(\gamma^k)
\\
+ & \sum_{a=1}^rd_ad_a'\left(
  - \sum_{i=1}^{n_a}\op{CZ}_\tau(\gamma^{q_{a,i}}) +
  2\sum_{k=1}^{m_a}\op{CZ}_\tau(\gamma^k) \right).
\end{split}
\]
But a straightforward computation using equation \eqref{eqn:CZHyp}
shows that this last inequality is always an equality.
\end{proof}

\section{The ECH index and the Euler characteristic}
\label{sec:RF}

To put the previous results in perspective, we now introduce a natural
variant of the ECH index, which bounds the negative Euler
characteristic of holomorphic curves, and which gives rise to a
relative filtration on ECH, or any other kind of contact homology of a
contact 3-manifold.

\subsection{Definition of $J_0$, $J_+$, and $J_-$}

\begin{notation}
  In a 3-manifold with a stable Hamiltonian structure (in which all Reeb
  orbits are assumed nondegenerate as usual), if
  $\alpha=\{(\alpha_i,m_i)\}$ is an orbit set and if $\tau$ is a
  trivialization of $\xi$ over the $\alpha_i$'s, define
\[
\mu_\tau'(\alpha) \eqdef \sum_i
\sum_{k=1}^{m_i-1}\op{CZ}_\tau(\alpha_i^k).
\]
This differs from the quantity $\mu_\tau(\alpha)$ defined in
\S\ref{sec:DECHI} only in that $m_i$ there is replaced by $m_i-1$ here.
\end{notation}

Now let $X$ be a symplectic cobordism from $Y_+$
to $Y_-$, and continue with the notation from \S\ref{sec:ECHIC}.

\begin{definition}
  Let $\alpha^+$ be an orbit set in $Y_+$ and let $\alpha^-$ be an
  orbit set in $Y_-$ such that $[\alpha^+]=[\alpha^-]\in H_1(X)$, and
  let $Z\in H_2(X,\alpha^+,\alpha^-)$.  Define
\[
  J_0(\alpha^+,\alpha^-,Z) \eqdef -c_\tau(Z) + Q_\tau(Z) +
  \mu_\tau'(\alpha^+)
  - \mu_\tau'(\alpha^-).
\]
\end{definition}

Here $\tau$ is a trivialization of $\xi_+$ over the orbits in
$\alpha^+$ and of $\xi_-$ over the orbits in $\alpha^-$.  The
definition of $J_0$ differs from that of the ECH index $I$ only in
that the sign of $c_\tau$ is switched, and $\mu_\tau$ is replaced by
$\mu_\tau'$.  The usual considerations using equations
\eqref{eqn:cTriv}, \eqref{eqn:QTriv}, and \eqref{eqn:CZTriv} show that
$J_0$ does not depend on $\tau$.

\begin{example}
  Suppose that $C$ is an embedded holomorphic curve in $X$, whose ends
  are at distinct Reeb orbits with multiplicity $1$.  Then $w_\tau(C)
  =0$ and $\mu_\tau' (C)=0$, so it follows from the relative
  adjunction formula \eqref{eqn:RAF} that
\[
J_0(C) = -\chi(C).
\]
\end{example}

Two variants of $J_0$ are also of interest.

\begin{definition}
\label{def:JVariant}
If $\alpha=\{(\alpha_i,m_i)\}$ is an orbit set, define
\[
|\alpha| \eqdef \sum_i\left\{\begin{array}{cl} 1, & \mbox{$\alpha_i$
      elliptic}, \\
m_i, & \mbox{$\alpha_i$ positive hyperbolic,}\\
\ceil{m_i/2}, & \mbox{$\alpha_i$ negative hyperbolic.}
\end{array}\right.
\]
Now define
\[
\begin{split}
  J_+(\alpha^+,\alpha^-,Z) &\eqdef J_0(\alpha^+,\alpha^-,Z) +
  |\alpha^+| -
  |\alpha^-|,\\
  J_-(\alpha^+,\alpha^-,Z) &\eqdef J_0(\alpha^+,\alpha^-,Z) -
  |\alpha^+| + |\alpha^-|.
\end{split}
\]
\end{definition}

\subsection{Properties of $J_0$, $J_+$, and $J_-$ in symplectizations}
\label{sec:JDiscussion}

Suppose now that $X$ is the symplectization of a closed oriented
3-manifold $Y$ with a stable Hamiltonian structure.  Then $J_0$, $J_+$, and
$J_-$ have the following basic properties, which are similar to those
of the ECH index $I$:

\begin{proposition}
\label{prop:JBasics}
Suppose $X$ is the symplectization of $Y$.  Fix $J$ to denote one of
$J_0$, $J_+$, or $J_-$.  Then:
\begin{description}
\item{(a)}
(Additivity) If $Z\in H_2(Y,\alpha,\beta)$ and $W\in
  H_2(Y,\beta,\gamma)$ then
\[
J(\alpha,\beta,Z) + J(\beta,\gamma,W) = J(\alpha,\gamma,W).
\]
\item{(b)} (Ambiguity) If $Z,Z'\in H_2(Y,\alpha,\beta)$ where
$[\alpha]=[\beta]=\Gamma\in H_1(Y)$ then
\[
J(\alpha,\beta,Z) - J(\alpha,\beta,Z') = \langle -c_1(\xi) +
2\op{PD}(\Gamma), Z-Z'\rangle.
\]
\item{(c)} (Absolute version) For each orbit set
  $\alpha=\{(\alpha_i,m_i)\}$, there is a homotopy class of oriented
  2-plane fields $J(\alpha) \in \mc{P}(Y)$ such that:
\begin{description}
\item{(i)} $J(\alpha)$ is obtained by modifying $\xi$ by a canonical
  manner (up to homotopy, depending only on $m_i$) in disjoint tubular
  neighborhoods of each $\alpha_i$.
\item{(ii)} $\frak{s}(J(\alpha)) = \frak{s}(\xi) - \op{PD}([\alpha])$.
\item{(iii)} If $\alpha$ and $\beta$ are orbit sets with
  $[\alpha]=[\beta]=\Gamma$ then $J(\alpha,\beta,Z)\equiv
  J(\alpha) - J(\beta)$ in $\Z/d(-c_1(\xi)+2\op{PD}(\Gamma))$.
\end{description}
\end{description}
\end{proposition}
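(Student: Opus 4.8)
The plan is to mimic, almost verbatim, the treatment of the ECH index $I$ in Sections~\ref{sec:I} and~\ref{sec:AG}, making three substitutions throughout: replace $c_\tau$ by $-c_\tau$, replace $\mu_\tau$ by $\mu_\tau'$, and replace the $2$-plane field $P_\tau(L)$ by the variant $P'(L)$ of Remark~\ref{rem:P'}. Since $J_\pm$ differs from $J_0$ only by adding $\pm(|\alpha^+|-|\alpha^-|)$, and $|\cdot|$ is manifestly additive, $Z$-independent, and integer-valued, every assertion for $J_\pm$ will follow at once from the corresponding assertion for $J_0$; so the argument concentrates on $J_0$.

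\emph{Parts (a) and (b).} Additivity (a) is proved exactly as the additivity of $I$ recorded in the Remark after \eqref{eqn:Ialphabeta}: that argument uses only additivity of $c_\tau$ and of $\mu_\tau$ under concatenation of relative homology classes together with the corresponding concatenation property of $Q_\tau$, and $-c_\tau$ and $\mu_\tau'$ obey the same identities. For the ambiguity formula (b) one copies the derivation of the index ambiguity formula for $I$: by \eqref{eqn:cAmb} the term $-c_\tau$ contributes $-\langle c_1(\xi),Z-Z'\rangle$, and using the symmetry of $Q_\tau$ together with \eqref{eqn:QAmb} one gets $Q_\tau(Z)-Q_\tau(Z')=2(Z-Z')\cdot\Gamma=\langle 2\op{PD}(\Gamma),Z-Z'\rangle$, while the $\mu_\tau'$ terms cancel; adding these gives the stated formula.

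\emph{Part (c).} Following the definition of $I(\alpha)$ in \eqref{eqn:DAG}, choose trivializations $\tau=\{\tau_i\}$ and braids $\zeta_i$ around $\alpha_i$ with $m_i$ strands in disjoint tubular neighborhoods, put $L\eqdef\bigcup_i\zeta_i$, and set
\[
J_0(\alpha)\eqdef P'(L)-\sum_i w_{\tau_i}(\zeta_i)+\mu_\tau'(\alpha),\qquad J_\pm(\alpha)\eqdef J_0(\alpha)\pm|\alpha|.
\]
Well-definedness is checked as in Lemma~\ref{lem:IWD}: independence of the braids uses the analogue of Lemma~\ref{lem:PtauL}(c) for $P'$ (same sign, by Remark~\ref{rem:P'}); independence of $\tau$ uses \eqref{eqn:wTriv}, the identity $\mu_\tau'(\alpha)-\mu_{\tau'}'(\alpha)=\sum_i m_i(m_i-1)(\tau_i'-\tau_i)$ (immediate from \eqref{eqn:CZTriv}, equivalently \eqref{eqn:muTriv} with $m_i$ replaced by $m_i-1$), and the fact that $P'(L)$ does not depend on a framing of $L$. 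Property (i) then holds by construction, just as for Theorem~\ref{thm:AG}(a), and property (ii) holds because the writhe, $\mu_\tau'$, and $|\cdot|$ terms are integer shifts, which do not affect the spin-c structure, so $\frak{s}(J(\alpha))=\frak{s}(P'(L))=\frak{s}(\xi)-\op{PD}([L])=\frak{s}(\xi)-\op{PD}([\alpha])$. For (iii) one transcribes \S\ref{sec:IRAP}: with the same special braids and the same nice representative $\widehat{S}$ of $\widehat{Z}$, Lemma~\ref{lem:Qwetahat} and \eqref{eqn:writheUnion} give $\sum_i w_{\tau_i^+}(\zeta_i^+)-\sum_j w_{\tau_j^-}(\zeta_j^-)=-Q_\tau(Z)-\eta_\tau(\widehat{S})$, so the desired congruence $J_0(\alpha,\beta,Z)\equiv J_0(\alpha)-J_0(\beta)$ modulo $d(-c_1(\xi)+2\op{PD}(\Gamma))=d(c_1(\xi)-2\op{PD}(\Gamma))$ reduces to $P'(L_+)-P'(L_-)\equiv -c_\tau(Z)-\eta_\tau(\widehat{S})$; by the analogue of Lemma~\ref{lem:PtauL}(d) for $P'$ (which holds with the opposite sign, and with no framing-correction term since $P'$ is framing-blind) this equals $-c_1(\xi|_S,\tau^\nu)$, and the computation $c_1(\xi|_S,\tau^\nu)=c_\tau(Z)+\eta_\tau(\widehat{S})$ from \S\ref{sec:IRAP} applies verbatim.

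\emph{Main obstacle.} There is no essential difficulty beyond careful bookkeeping; the one point that must be gotten exactly right is the interaction of sign changes in part (c)(iii): the sign flip in $c_\tau$, the opposite sign in the $P'$-analogue of Lemma~\ref{lem:PtauL}(d), and the disappearance of the $2\eta_\tau(\widehat{S})$ correction term (because $P'$ ignores framings, unlike $P_\tau$) must combine so that the chain of equations at the end of \S\ref{sec:IRAP} reassembles to produce $-c_\tau(Z)$ rather than $c_\tau(Z)$. Once this is verified, all remaining steps are mechanical adaptations of arguments already in the paper.
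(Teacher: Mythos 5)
Your proposal is correct and is essentially the paper's own argument: the paper's proof consists precisely of the instruction to mimic the proofs of the corresponding properties of $I$, replacing \eqref{eqn:DAG} by $J_0(\alpha)\eqdef P'(L)-\sum_i w_{\tau_i}(\zeta_i)+\mu_\tau'(\alpha)$ with $P'(L)$ as in Remark~\ref{rem:P'}. Your more detailed bookkeeping — in particular the verification in (c)(iii) that the sign flip in the $P'$-analogue of Lemma~\ref{lem:PtauL}(d) and the absence of the $2\eta_\tau(\widehat{S})$ framing correction combine to yield $-c_\tau(Z)-\eta_\tau(\widehat{S})$ — correctly fills in what the paper leaves implicit.
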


\begin{proof}
One mimics the proofs of the corresponding properties of $I$.  For
part (c), one replaces equation \eqref{eqn:DAG} by
\[
J_0(\alpha) \eqdef P'(L) -
\sum_iw_{\tau_i}(\zeta_i)+\mu_\tau'(\alpha),
\]
where $P'(L)$ was defined in Remark~\ref{rem:P'}.
\end{proof}

The culminating result to be proved in this section is:

\begin{theorem}
\label{thm:J+}
Suppose $X$ is the symplectization of a contact 3-manifold $Y$, with
an admissible almost complex structure.  Then every holomorphic curve
$C$ in $X$ satisfies $J_+(C)\ge 0$.
\end{theorem}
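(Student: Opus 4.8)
The plan is to adapt the proof scheme of Theorem~\ref{thm:indI} and Theorem~\ref{thm:IU}: use the relative adjunction formula \eqref{eqn:RAF} to convert the desired inequality into a bound on asymptotic writhe, and then reduce it to a purely combinatorial inequality at each embedded Reeb orbit at which $C$ has ends, arranged so that the correction terms $|\alpha^\pm|$ from Definition~\ref{def:JVariant} supply exactly the slack that is needed.

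First I would reduce to the case that $C$ is irreducible. If $C$ is disconnected, write it as a disjoint union and argue by induction: the pieces pairwise have no common irreducible component, so their intersection numbers are nonnegative by positivity of intersections, and applying the union inequality for $J_0$ (Proposition~\ref{prop:JU}), whose contributions from the ends are designed precisely to compensate the failure of $|\cdot|$ to be additive over shared Reeb orbits, passes the problem down to each irreducible piece. So assume $C$ is irreducible, hence a $d$-fold branched cover of a simple irreducible curve $C_0$. Since $J_0(\alpha^+,\alpha^-,Z)$ depends only on the orbit sets and the relative homology class, $J_0(C)$ is determined by $d$ together with $c_\tau(C_0)$, $Q_\tau(C_0)$, and the Conley--Zehnder sums at the orbits of $C_0$ with their multiplicities scaled by $d$. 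Applying the relative adjunction formula \eqref{eqn:RAF} and the index formula \eqref{eqn:ind} to the simple curve $C_0$, exactly as in the proof of Theorem~\ref{thm:IU}, rewrites $J_+(C)$ as a sum of manifestly nonnegative terms built from $\delta(C_0)$, $g(C_0)$, and $C_0\cdot C_0$ (controlled by Proposition~\ref{prop:CdotC}), plus a local contribution at each Reeb orbit involving the writhe $w_\tau$ of the braid of $C$ there, the relevant Conley--Zehnder terms, and the $|\cdot|$ correction.

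The heart of the argument is the local inequality at each embedded Reeb orbit $\gamma$. The braid of $C$ at $\gamma$ is a union of $d$ copies of the braid of $C_0$ at $\gamma$, so Lemma~\ref{lem:WBE}, Lemma~\ref{lem:linkingBound}, Lemma~\ref{lem:writheBound} and Lemma~\ref{lem:CLI} apply to the list of end multiplicities of $C_0$ at $\gamma$ repeated $d$ times. Using that $-\chi(C)$ equals $2g(C)-2$ plus the number of ends of $C$, one reduces the whole inequality to showing, case by case according to whether $\gamma$ is elliptic, positive hyperbolic, or negative hyperbolic, that the contribution of a positive end at $\gamma$ --- namely the number of ends there, minus the braid writhe, plus the Conley--Zehnder terms coming from $\mu'_\tau$, plus the $|\alpha^+|$ contribution --- is at least $2$, and that the analogous contribution of a negative end is at least $0$; summing over all orbits and using that $C$, being a nonconstant holomorphic curve in a symplectization, has at least one positive end then gives $J_+(C)\ge 2g(C)+2\delta(C)\ge 0$. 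For elliptic $\gamma$ the needed estimates follow from Lemma~\ref{lem:CE1} and $\op{CZ}_\tau(\gamma^k)=2\floor{k\theta}+1$, the $+1$ contributed by the orbit to $|\alpha^\pm|$ being exactly what turns the bare bound $\ge 1$ into $\ge 2$ on the positive side; the hyperbolic cases are handled by choosing $\tau$ with $\op{CZ}_\tau(\gamma^k)$ equal to $0$ (positive hyperbolic) or $k$ (negative hyperbolic), after which the contribution $m$, respectively $\ceil{m/2}$, of such an orbit to $|\alpha^\pm|$ plays the analogous role. Finally the case $C_0=\R\times\gamma$ with $\gamma$ elliptic, excluded from Proposition~\ref{prop:CdotC}, is treated by hand: such a curve has tightly constrained topology by Riemann--Hurwitz and equal total positive and negative multiplicity at $\gamma$, and $J_+\ge 0$ is checked directly.

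I expect the main obstacle to be the combinatorial bookkeeping in the multiply covered case: one must verify that feeding the list of $C_0$'s end multiplicities repeated $d$ times into Lemma~\ref{lem:CLI} still leaves exactly enough room for the $|\alpha^\pm|$ corrections with no loss, while simultaneously the ``$d(d-1)/2$''-type terms produced by applying the index formula \eqref{eqn:ind} to $C_0$ remain nonnegative --- this is where Proposition~\ref{prop:CdotC} and the separate treatment of $\R$-invariant elliptic cylinders are indispensable, and where the contact hypothesis (rather than a general stable Hamiltonian structure) enters, through the linearized positivity of intersections underlying Proposition~\ref{prop:CdotC}.
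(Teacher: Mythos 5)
Your overall architecture is close in spirit to the paper's (split off pieces with the union inequality of Proposition~\ref{prop:JU}, use per-orbit estimates plus the contact hypothesis to get at least one positive end, and treat covers of $\R$-invariant cylinders separately), but two of your steps do not go through as written. First, in the reduction to irreducible curves you assert that the connected components ``pairwise have no common irreducible component, so their intersection numbers are nonnegative by positivity of intersections.'' Distinct components of a disconnected curve can perfectly well be covers of the \emph{same} simple curve $C_0$, in which case $C\cdot C'=dd'\,C_0\cdot C_0$, and this is negative exactly when $C_0$ is a trivial cylinder over an elliptic orbit (there $C_0\cdot C_0=-1$). The paper's remedy is to use that $J_+$ depends only on the orbit sets and the relative homology class: replace each multiple cover by the corresponding number of copies of its underlying simple curve, group all covers of each trivial cylinder into a single piece (for which $J_+=0$ by definition), and apply Proposition~\ref{prop:JU} only to pairs of pieces whose images share no trivial cylinder, so that Proposition~\ref{prop:CdotC} and intersection positivity give $C\cdot C'\ge 0$. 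This same observation makes your entire direct treatment of a connected $d$-fold cover (Riemann--Hurwitz, repeated multiplicity lists fed into Lemma~\ref{lem:CLI}, the ``$d(d-1)/2$'' terms) unnecessary; note also that the braid of a cover is \emph{not} literally $d$ copies of the braid of $C_0$, since ends of the cover may multiply cover ends of $C_0$ --- only the homological bookkeeping you want is true.

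Second, and more seriously, your per-orbit accounting at hyperbolic orbits is wrong on the negative side. At a positive end the $|\alpha^+|$ term is \emph{added}, so the crude bounds (Lemma~\ref{lem:WBE}, Lemma~\ref{lem:CE1}) suffice there; but at a negative end the $|\alpha^-|$ term is \emph{subtracted}, so you need the $J_0$-contribution of such an orbit to be at least $m_\gamma$ (positive hyperbolic) resp.\ $\ceil{m_\gamma/2}$ (negative hyperbolic), and Lemma~\ref{lem:WBE} only yields roughly $n_\gamma$, which is too small as soon as some end has multiplicity $\ge 2$. For instance, for a single end of multiplicity $4$ at a negative hyperbolic orbit, Lemma~\ref{lem:WBE} gives writhe $\le 6$, whereas the required bound is $\le 5$, and the resulting $J_+$-contribution from your accounting would be $-1$. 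Closing this requires the strengthened writhe bounds $w_\tau(\zeta_i)\le(\rho_i-1)(q_i-1)$ and $w_\tau(\zeta_i)\le\ceil{(q_i-1)^2/2}$ coming from \cite[Lem.~6.8]{pfh2} together with Siefring's asymptotics \cite{siefring1}; these are exactly what Proposition~\ref{prop:JBound} and Corollary~\ref{cor:J+} package. If you simply invoke Corollary~\ref{cor:J+} for the simple irreducible pieces (which is what the paper does), your induction closes; but as written, the claim that ``the contribution of a negative end is at least $0$'' does not follow from the lemmas you cite.
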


Note that the almost complex structure is not assumed to be generic,
and the holomorphic curve $C$ is not assumed to be simple or irreducible.

\begin{remark}
  This theorem implies that the differential $\partial$ in the
  embedded contact homology (or any other kind of contact homology) of
  a contact 3-manifold can be decomposed as $
  \partial = \partial_0 + \partial_1 + \cdots $ where $\partial_k$
  denotes the contribution from holomorphic curves $C$ with
  $J_+(C)=k$.  By the additivity of $J_+$, the identity $\partial^2=0$
  can be refined to $\partial_0^2=0$,
  $\partial_0\partial_1+\partial_1\partial_0$, etc.  However it is not
  clear if this leads to new topological invariants, except perhaps in
  some special situations, because in general maps induced by cobordisms
  might include contributions from curves with $J_+$ negative.
\end{remark}

\begin{example}
  A tool that was used in \cite{t3} to help compute the embedded
  contact homology of $T^3$ turns out to be a special case of the
  relative filtration $J_+$.  Namely, for the contact forms on $T^3$
  considered in \cite{t3}, $J_+(\alpha,\beta,Z)$ equals
  $I(\alpha,\beta,Z)$ plus the number of hyperbolic orbits in $\alpha$
  minus the number of hyperbolic orbits in $\beta$.  All curves that
  contribute to the ECH differential in this case have $J_+=2$.  Thus
  $2I-J_+$ defines a second grading which is preserved by the
  differential, and this is what appears in \cite[Def. 5.1]{t3}.
\end{example}

\subsection{Lower bounds on $J_0$ in the general case}

Theorem~\ref{thm:indI} has the following analogue for $J_0$.

\begin{proposition}
\label{prop:JBound}
  Let $X$ be a symplectic cobordism from $Y_+$ and $Y_-$, let
  $\alpha^+$ be an orbit set in $Y_+$, and let $\alpha^-$ be an orbit
  set in $Y_-$.  Suppose $C\in\mc{M}(\alpha^+,\alpha^-)$ is simple and
  irreducible and has genus $g$.  Then
\begin{equation}
\label{eqn:JBound}
J_0(C) \ge 2(g - 1 + \delta(C)) + \sum_{\gamma}
\left\{\begin{array}{cl} 2n_\gamma - 1, & \mbox{$\gamma$ elliptic,} \\
m_\gamma, & \mbox{$\gamma$ positive hyperbolic},\\
\frac{m_\gamma + n_\gamma^{odd}}{2}, & \mbox{$\gamma$ negative
  hyperbolic}.
\end{array}
\right.
\end{equation}
Here the sum is over all embedded Reeb orbits $\gamma$ in $Y_+$ or
$Y_-$ at which $C$ has ends; $m_\gamma$ denotes the total multiplicity
of the ends of $C$ at $\gamma$; $n_\gamma$ denotes the number of ends
of $C$ at $\gamma$; and $n_\gamma^{odd}$ denotes the number of ends of
$C$ at $\gamma$ with odd multiplicity\footnote{When $X$ is a
  symplectization, $Y_+$ and $Y_-$ are still regarded as distinct,
  i.e.\ the positive and negative ends are to be counted in separate
  summands in \eqref{eqn:JBound}.}.
\end{proposition}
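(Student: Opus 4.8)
The plan is to mimic the proof of Theorem~\ref{thm:indI}: reduce \eqref{eqn:JBound} to an upper bound on the asymptotic writhe via the relative adjunction formula, apply the analytic writhe and linking bounds orbit by orbit, and finish with a combinatorial inequality. Substituting \eqref{eqn:RAF} into the definition of $J_0$ gives $J_0(C) = -\chi(C) - w_\tau(C) + 2\delta(C) + \mu_\tau'(C)$, and since $C$ is irreducible of genus $g$ we have $-\chi(C) = 2g - 2 + N$ with $N$ the total number of ends of $C$, so \eqref{eqn:JBound} is equivalent to $N + \mu_\tau'(C) - w_\tau(C) \ge \sum_\gamma(\cdots)$. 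The asymptotic braids of $C$ at distinct embedded Reeb orbits lie in disjoint tubular neighborhoods, so $N$, $w_\tau(C)$, and $\mu_\tau'(C)$ split into contributions of the individual orbits $\gamma$ at which $C$ has ends, with positive and negative ends contributing to separate summands; and by \eqref{eqn:wTriv} and \eqref{eqn:CZTriv} each contribution is unchanged if $\tau$ is changed over that one orbit, so I may choose a convenient trivialization on each orbit. The negative-end orbits are handled by the completely symmetric argument, using the lower-bound forms of the analytic estimates below and the fact that every term of \eqref{eqn:JBound} is invariant under negating the monodromy angle or rotation number, exactly as the negative ends are treated in Theorem~\ref{thm:indI}. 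So it remains to show, for a positive-end orbit $\gamma$ with ends of multiplicities $q_1,\dots,q_n$ and $m = \sum_i q_i$, that $n - w_\tau(\zeta_\gamma) + \sum_{k=1}^{m-1}\op{CZ}_\tau(\gamma^k)$ is at least the term for $\gamma$ on the right of \eqref{eqn:JBound}.

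For this I would write $\zeta_\gamma = \bigcup_i\zeta_i$, so $w_\tau(\zeta_\gamma) = \sum_i w_\tau(\zeta_i) + \sum_{i\ne j}\ell_\tau(\zeta_i,\zeta_j)$, and bound the linking terms by Lemma~\ref{lem:linkingBound} and the individual writhes by Lemma~\ref{lem:WBE}, but using the refinement of \eqref{eqn:WBE} alluded to after that lemma: when $d_i\eqdef\gcd(q_i,\rho_i) > 1$ one has the stronger bound $w_\tau(\zeta_i)\le\rho_i(q_i-1)-(d_i-1)$, of which conditions (i) and (ii) are the borderline cases. This leaves a combinatorial inequality in each of the three cases. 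If $\gamma$ is positive hyperbolic, choose $\tau$ with $\op{CZ}_\tau(\gamma^k)=0$ for all $k$; then $\rho_i=0$, $d_i=q_i$, the linking terms vanish, and the refined bound gives $w_\tau(\zeta_\gamma)\le-\sum_i(q_i-1)=n-m$, which is exactly the required bound. If $\gamma$ is negative hyperbolic, choose $\tau$ with $\op{CZ}_\tau(\gamma^k)=k$; a short computation with $\max$ and $\gcd$ (using the refinement precisely when some $q_i$ is even and $\ge 4$) reduces the claim to $\sum_{i,j}\min(q_i\varepsilon_j,q_j\varepsilon_i)\ge\sum_{q_i\text{ odd}}q_i$, where $\varepsilon_i\in\{0,1\}$ is the parity of $q_i$, which already holds from the diagonal terms with $q_i$ odd. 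If $\gamma$ is elliptic with monodromy angle $\theta$, choose $\tau$ with $\theta\in(0,1)$; then by \eqref{eqn:CZEll} the claim becomes
\[
\sum_{i,j=1}^n\max\!\left(q_i\floor{q_j\theta},\,q_j\floor{q_i\theta}\right)\;\le\;2\sum_{k=1}^{m-1}\floor{k\theta}+\sum_{i=1}^n\floor{q_i\theta}+m-n,
\]
which I would prove by the Pick's-formula argument of Lemma~\ref{lem:CE1}: the left side equals twice the area $A$ of the lattice region $R$ bounded by the concave path $\Lambda$ through the points $\sum_{i\le j}(q_i,\floor{q_i\theta})$ (with the $q_i$ ordered as in \eqref{eqn:ordering}), the bottom edge, and the right edge, and $2A=2L-B-2$; the bound $B\ge m+n+\sum_i\floor{q_i\theta}$ on the boundary lattice points is as in Lemma~\ref{lem:CE1}, but the count $L$ of all lattice points in $R$ is estimated more sharply using that $\Lambda$ ends at height $\sum_i\floor{q_i\theta}$, so the line $x=m$ contributes only $\sum_i\floor{q_i\theta}+1$ points to $L$ rather than $\floor{m\theta}+1$, while every other line $x=k$ contributes at most $\floor{k\theta}+1$ because $\Lambda$ lies below $y=\theta x$.

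The hard part is this combinatorial input. The existing combinatorial lemma, Lemma~\ref{lem:CE1} (hence Lemma~\ref{lem:CE0}, and with it Theorem~\ref{thm:indI}), is \emph{not} strong enough for $J_0$: it is tuned to the outgoing partition $\pout_\theta(m)$, whereas the sharp lower bound on $J_0$ requires retaining the raw analytic writhe bound of Lemma~\ref{lem:writheBound} together with the sharper lattice-point count above, the gain being exactly $\floor{m\theta}-\sum_i\floor{q_i\theta}\ge 0$. Making the $\gcd$-refinement of Lemma~\ref{lem:WBE} explicit, and verifying that it is enough for the negative hyperbolic case with arbitrary even multiplicities, is the other point requiring care. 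The genus and $\delta(C)$ terms in \eqref{eqn:JBound} come along for free once the writhe reduction is in place, and no genericity of $J$ is used.
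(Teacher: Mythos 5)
Your proposal is correct and takes essentially the same route as the paper's proof: the relative adjunction formula reduces \eqref{eqn:JBound} to the per-orbit estimate on $n+\mu_\tau'-w_\tau$, the elliptic case is handled by the raw writhe bound together with exactly the sharper Pick's-formula lattice-point count you describe (the path ending at height $\sum_i\floor{q_i\theta}$ rather than $\floor{m\theta}$), and the hyperbolic cases use improved per-end writhe bounds. The only cosmetic difference is that you package those improvements as the uniform $\gcd$ refinement $w_\tau(\zeta_i)\le\rho_i(q_i-1)-(\gcd(q_i,\rho_i)-1)$, whereas the paper uses its specializations $(\rho_i-1)(q_i-1)$ and $\ceil{(q_i-1)^2/2}$ (justified by the arguments of \cite{pfh2} and Siefring's asymptotics), which lead to the same cancellations you verify.
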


\begin{proof}
  Let $n$ denote the number of ends of $C$.  The relative adjunction
  formula \eqref{eqn:RAF} implies that
\[
J_0(C) = 2g - 2 + n + \mu_\tau'(C) - w_\tau(C) + 2\delta(C).
\]
Thus it is enough to show that
$n + \mu_\tau'(C) - w_\tau(C)$ is greater than or equal to the sum
over $\gamma$ in \eqref{eqn:JBound}.

To prove this last inequality, we can assume without loss of
generality (as will be clear from the argument below) that there is a
single embedded Reeb orbit $\gamma$ in $Y_+$ such that all ends of $C$
are positive ends at $\gamma$.  Thus $\alpha^-=\emptyset$, and we can
write $\alpha^+=\{(\gamma,m)\}$.  Let $q_1,\ldots,q_n$ denote the
multiplicities of the positive ends of $C$ at $\gamma$, so in
particular $\sum_{i=1}^nq_i=m$.  Let $\zeta_1,\ldots,\zeta_n$ denote
the corresponding braids around $\gamma$, and let $\zeta\eqdef
\bigcup_i\zeta_i$.  Then we need to show that
\begin{equation}
\label{eqn:NTSI}
n + \sum_{k=1}^{m-1}\op{CZ}_\tau(\gamma^k) - w_\tau(\zeta)
\ge \left\{\begin{array}{cl} 2n-1, & \mbox{$\gamma$ elliptic},\\
m, & \mbox{$\gamma$ positive hyperbolic,}\\
\frac{m+n_{odd}}{2}, & \mbox{$\gamma$ negative hyperbolic.}
\end{array}
\right.
\end{equation}
Here $n_{odd}$ denotes the number of odd $q_i$'s.

Case 1: $\gamma$ is elliptic.  In this case the proof of
\eqref{eqn:NTSI} follows the proof of the inequality in
Lemma~\ref{lem:UWB}.  However instead of the combinatorial inequality
in Lemma~\ref{lem:CE1}, one needs the slightly stronger inequality
\[
\sum_{i,j=1}^n\max(q_i\floor{q_j\theta},q_j\floor{q_i\theta}) \le
2\sum_{k=1}^{m-1}\floor{k\theta} + \sum_{i=1}^n\floor{q_i\theta} + m - n.
\]
This is proved the same way as Lemma~\ref{lem:CE1}, but with the
inequality \eqref{eqn:pick3} replaced by the equally obvious inequality
\[
L \le 2 + \sum_{k=1}^{m-1}(\floor{k\theta}+1) +
\sum_{i=1}^n\floor{q_i\theta}.
\]

Case 2: $\gamma$ is positive hyperbolic.  In this case
Lemma~\ref{lem:WBE} can be improved to 
\[
w_\tau(\zeta_i) \le (\rho_i-1)(q_i -1).
\]
This can be proved by the arguments in \cite[Lem. 6.8]{pfh2}, using
\cite[Thm. 2.3]{siefring1} to provide the necessary asymptotic
analysis in the present setting.
Hence the writhe bound in Lemma~\ref{lem:UWB} can be improved to
\[
w_\tau(\zeta) \le \sum_{k=1}^m\op{CZ}_\tau(\gamma^k) -
\sum_{i=1}^n\op{CZ}_\tau(\gamma^{q_i}) - \sum_{i=1}^n(q_i-1).
\]
So to prove \eqref{eqn:NTSI} in this case it is enough to show that
\[
n - \op{CZ}_\tau(\gamma^m) + \sum_{i=1}^n\op{CZ}_\tau(\gamma^{q_i}) +
\sum_{i=1}^n (q_i-1)
\ge m.
\]
But this is an equality, because by \eqref{eqn:CZHyp}, all the
CZ terms cancel.

Case 3: $\gamma$ is negative hyperbolic.  Choose the trivialization
$\tau$ so that $\op{CZ}(\tau^k)=k$.  In this case Lemma~\ref{lem:WBE} can
be improved to
\[
w_\tau(\zeta_i) \le \ceil{\frac{(q_i-1)^2}{2}},
\]
again by the arguments in \cite[Lem. 6.8]{pfh2} with the help of
\cite[Thm. 2.3]{siefring1}.  So together with
Lemma~\ref{lem:linkingBound}, we obtain
\[
w_\tau(\zeta) \le \sum_{i=1}^n \ceil{\frac{(q_i-1)^2}{2}} +
2\sum_{i=1}^n \sum_{j=i+1}^n \max\left(q_i\floor{\frac{q_j}{2}},
  q_j\floor{\frac{q_i}{2}}\right).
\]
To simplify this, order the $q_i$'s so that $q_1,\ldots,q_{n_{odd}}$
are odd and $q_1\ge\cdots\ge q_{n_{odd}}$.  A straightforward
calculation then deduces from the above inequality that
\[
n + \sum_{k=1}^{m-1} \op{CZ}_\tau(\gamma^k) - w_\tau(\zeta) \ge
\frac{m+n_{odd}}{2} + \sum_{j=1}^{n_{odd}}(j-1)q_j.
\]
The inequality \eqref{eqn:NTSI} follows.
\end{proof}

\begin{corollary}
\label{cor:EulerBound}
If $C$ is a simple holomorphic curve as above, then
\[
-\chi(C) \le J_0(C) - 2\delta(C).
\]
\end{corollary}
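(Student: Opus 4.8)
The plan is to deduce Corollary~\ref{cor:EulerBound} from the relative adjunction formula \eqref{eqn:RAF} together with the per-orbit writhe estimates already obtained inside the proof of Proposition~\ref{prop:JBound}, being careful only that $C$ is assumed simple, not necessarily irreducible (the corollary being stated for all simple curves). The first step is to record, exactly as at the start of the proof of Proposition~\ref{prop:JBound}, that \eqref{eqn:RAF} and the definition of $J_0$ give
\[
J_0(C) = -\chi(C) + \mu_\tau'(C) - w_\tau(C) + 2\delta(C).
\]
The only place irreducibility entered that computation was in rewriting $2g(C)-2+n$ as $-\chi(C)$, where $n$ is the total number of ends of $C$; for a possibly disconnected simple curve one simply keeps $-\chi(C)$, and \eqref{eqn:RAF} as stated already applies (``holomorphic curve'' allows disconnected domains). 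Hence the assertion $-\chi(C)\le J_0(C)-2\delta(C)$ is equivalent to the writhe bound $w_\tau(C)\le\mu_\tau'(C)$.

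The second step is to reduce this to a statement about a single Reeb orbit. Both $w_\tau(C)$ and $\mu_\tau'(C)$ are sums over the embedded Reeb orbits at which $C$ has ends, with the orbits in $Y_-$ entering with a minus sign, so it suffices to prove that for each embedded Reeb orbit $\gamma$ in $Y_+$ at which $C$ has positive ends of multiplicities $q_1,\dots,q_n$, total multiplicity $m$, and braid $\zeta$, one has $w_\tau(\zeta)\le\sum_{k=1}^{m-1}\op{CZ}_\tau(\gamma^k)$, together with the mirror inequality $w_\tau(\zeta)\ge\sum_{k=1}^{m-1}\op{CZ}_\tau(\gamma^k)$ for the braid of negative ends at each embedded Reeb orbit of $Y_-$; the latter is proved in the same way, with all inequalities reversed, as for the negative-end versions of the writhe bounds, so only the positive-end case needs attention. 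The point that makes the disconnected case no harder than the irreducible one is that the analytic inputs — Lemmas~\ref{lem:WBE} and \ref{lem:linkingBound} — are asymptotic statements about ends asymptotic to covers of $\gamma$, and make no reference to connectedness of the domain of $C$.

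The third step is to prove this per-orbit bound by quoting the inequality \eqref{eqn:NTSI} established in the proof of Proposition~\ref{prop:JBound}; its derivation uses only the writhe and linking bounds just mentioned together with the combinatorial lemmas, and never uses irreducibility. Since the left-hand side of \eqref{eqn:NTSI} is exactly $n+\sum_{k=1}^{m-1}\op{CZ}_\tau(\gamma^k)-w_\tau(\zeta)$, the bound we want follows once the right-hand side of \eqref{eqn:NTSI} is seen to be at least $n$. This is elementary in each case: $2n-1\ge n$ when $\gamma$ is elliptic (using $n\ge 1$); $m=\sum_i q_i\ge n$ when $\gamma$ is positive hyperbolic; and $m=\sum_i q_i\ge n_{odd}+2(n-n_{odd})=2n-n_{odd}$ when $\gamma$ is negative hyperbolic, so that $(m+n_{odd})/2\ge n$.

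The step I expect to require the most care is the passage to the disconnected case. One cannot simply sum the inequality over the connected components of $C$: the quantities $J_0$, $Q_\tau$, $\mu_\tau'$, and $\delta$ all acquire cross-terms from intersections between distinct components, so additivity fails, and summing would in effect require the $J_0$-analogue of Theorem~\ref{thm:IU}. The remedy, as indicated above, is that the argument of Proposition~\ref{prop:JBound} applies essentially verbatim to a disconnected simple $C$ once ``$2g(C)-2$'' is replaced by ``$-\chi(C)-n$''; everything else is bookkeeping with \eqref{eqn:RAF}, the asymptotic writhe bounds, and the three two-line inequalities above.
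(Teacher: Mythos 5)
Your proposal is correct and takes essentially the same route as the paper: the paper deduces the corollary from Proposition~\ref{prop:JBound} (itself obtained from the adjunction formula \eqref{eqn:RAF} and the per-orbit bound \eqref{eqn:NTSI}) by checking that each summand in \eqref{eqn:JBound} is at least $n_\gamma$, which is exactly your three elementary case checks. The only difference is that the paper dispatches the non-irreducible case with the one-line remark that the argument works for any simple curve, which you make explicit by keeping $-\chi(C)$ in place of $2g-2+n$ and invoking \eqref{eqn:NTSI} orbit by orbit.
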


\begin{proof}
  We just have to check that for each $\gamma$ at which $C$ has ends,
  the corresponding summand in \eqref{eqn:JBound} is at least
  $n_\gamma$.  But this is easy.  (For the negative hyperbolic case,
  note that since each end has multiplicity at least one if odd and at
  least two if even, we have $m_\gamma \ge
  n_\gamma^{odd}+2(n_\gamma-n_\gamma^{odd})$.)  The argument works
  just as well if $C$ is not irreducible, as long as it is simple.
\end{proof}

By a similar but even easier argument, we have:

\begin{corollary}
\label{cor:J+}
  If $C\in\mc{M}(\alpha^+,\alpha^-)$ is a simple irreducible
  holomorphic curve as above, then
\[
J_0(C) \ge 2(g-1+\delta(C)) + |\alpha^+| + |\alpha^-|,
\]
or equivalently
\[
J_\pm(C) \ge 2(g-1+|\alpha^\pm|+\delta(C)).
\]
\end{corollary}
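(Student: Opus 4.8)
The plan is to deduce Corollary~\ref{cor:J+} directly from Proposition~\ref{prop:JBound}, exactly in the spirit of Corollary~\ref{cor:EulerBound}: the only difference is that we now compare the orbit-by-orbit lower bound in \eqref{eqn:JBound} against the quantity $|\alpha^+|+|\alpha^-|$ rather than against the total number of ends.

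First I would apply Proposition~\ref{prop:JBound} to write
\[
J_0(C) \ge 2(g-1+\delta(C)) + \sum_\gamma c_\gamma,
\]
where the sum is over embedded Reeb orbits $\gamma$ in $Y_+$ or $Y_-$ at which $C$ has ends, and $c_\gamma$ equals $2n_\gamma-1$, $m_\gamma$, or $\tfrac{1}{2}\!\left(m_\gamma + n_\gamma^{odd}\right)$ according as $\gamma$ is elliptic, positive hyperbolic, or negative hyperbolic. Since $\alpha^\pm=\{(\alpha_i^\pm,m_i^\pm)\}$ simply records, for each such $\gamma=\alpha_i^\pm$, the total multiplicity $m_\gamma=m_i^\pm$ of the ends of $C$ there, the sum $|\alpha^+|+|\alpha^-|$ equals $\sum_\gamma t_\gamma$, where $t_\gamma$ is $1$, $m_\gamma$, or $\ceil{m_\gamma/2}$ in the three cases. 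So it is enough to check $c_\gamma\ge t_\gamma$ for every $\gamma$. In the elliptic case $c_\gamma=2n_\gamma-1\ge 1=t_\gamma$ because $n_\gamma\ge 1$. In the positive hyperbolic case $c_\gamma=m_\gamma=t_\gamma$. In the negative hyperbolic case $c_\gamma\ge t_\gamma$ reads $m_\gamma+n_\gamma^{odd}\ge 2\ceil{m_\gamma/2}$, which is immediate when $m_\gamma$ is even, and when $m_\gamma$ is odd reduces to $n_\gamma^{odd}\ge 1$; the latter holds because a sum of even positive integers cannot be odd, so at least one end of $C$ at $\gamma$ has odd multiplicity. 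Summing over $\gamma$ yields $J_0(C)\ge 2(g-1+\delta(C))+|\alpha^+|+|\alpha^-|$, and the equivalent statement for $J_\pm$ follows at once from Definition~\ref{def:JVariant}, since $J_\pm(C)=J_0(C)\pm\bigl(|\alpha^+|-|\alpha^-|\bigr)$ gives $J_+(C)\ge 2(g-1+\delta(C))+2|\alpha^+|$ and similarly for $J_-$.

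There is no real obstacle here beyond this bookkeeping; the one step requiring a moment's thought is the negative hyperbolic estimate, and, just as in Corollary~\ref{cor:EulerBound}, it hinges on the parity observation that $n_\gamma^{odd}\ge 1$ whenever $m_\gamma$ is odd. As in that corollary, the argument is insensitive to whether the domain of $C$ is connected, so the same reasoning covers any simple $C$; for the stated corollary only the irreducible case is needed.
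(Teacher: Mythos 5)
Your proposal is correct and is essentially the paper's argument: the paper proves Corollary~\ref{cor:J+} by the same orbit-by-orbit comparison with Proposition~\ref{prop:JBound}, noting it is "a similar but even easier argument" than Corollary~\ref{cor:EulerBound}, with the only nontrivial case being the negative hyperbolic parity observation you make. The deduction of the $J_\pm$ inequalities from Definition~\ref{def:JVariant} is likewise exactly as intended.
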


\begin{remark}
This last inequality shows that $J_+$ is similar to the relative
filtration on the symplectic field theory \cite{egh} of a contact
manifold given by genus plus number of positive ends minus one.
\end{remark}

\begin{remark}
The inequality \eqref{eqn:JBound} implies the index inequality
\eqref{eqn:indI}.  One can see this by adding the index formula
\eqref{eqn:ind} and then arguing as in the proofs of the above
corollaries.  In particular, if $\op{ind}(C)=I(C)$ (e.g.\ if $C$ is a
curve in the symplectization of a contact manifold that contributes to
the ECH differential and does not contain trivial cylinders), then the
inequality \eqref{eqn:JBound} is sharp.
\end{remark}

\subsection{$J_0$ of unions and multiple covers}

As usual, let $X$ be a symplectic cobordism from $Y_+$ to $Y_-$ with
an admissible almost complex structure.

\begin{proposition}
\label{prop:JU}
If $C$ and $C'$ are holomorphic curves in $X$, then
\[
J_0(C\cup C') \ge J_0(C) + J_0(C') + 2C\cdot C' + E + N,
\]
where\footnote{When $X$ is a symplectization, $Y_+$ and $Y_-$ are
  still regarded as distinct in the definition of $E$ and $N$.}:
\begin{itemize}
\item
$E$ denotes the number of elliptic Reeb orbits in $Y_+$ or $Y_-$ at which both
$C$ and $C'$ have ends.
\item $N$ denotes the number of negative hyperbolic orbits $\gamma$ in
  $Y_+$ or $Y_-$ such that the total multiplicity of the ends of $C$
  at $\gamma$ and the total multiplicity of the ends of $C'$ at
  $\gamma$ are both odd.
\end{itemize}
\end{proposition}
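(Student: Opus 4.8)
\subsection*{Proof proposal for Proposition~\ref{prop:JU}}

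The plan is to follow the proof of Theorem~\ref{thm:IU} essentially verbatim, replacing the Conley--Zehnder sum $\mu_\tau$ by $\mu'_\tau$ throughout, and then upgrading the combinatorial input. First I would decompose $C$ and $C'$ into covers, of multiplicities $d_a$ and $d_a'$ respectively, of the finitely many distinct simple irreducible curves $C_1,\dots,C_r$ occurring in their images. Since $c_\tau$ and $\mu'_\tau$ are additive and $Q_\tau$ is quadratic in the sense of \eqref{eqn:quadratic}, the relative first Chern class terms cancel, and, using the relative adjunction formula \eqref{eqn:RAF} and the Fredholm index formula \eqref{eqn:ind} exactly as in the derivation of \eqref{eqn:PTAT}, one obtains
\[
J_0(C\cup C') - J_0(C) - J_0(C') - 2C\cdot C' = \big[\mu'_\tau(C\cup C') - \mu'_\tau(C) - \mu'_\tau(C')\big] - 2\sum_{a\neq b} d_a d_b'\,\ell_\tau(C_a,C_b) + \sum_a d_a d_a'\big(e(C_a) - \mu^0_\tau(C_a) - 2w_\tau(C_a)\big),
\]
where $e(C_a)$ denotes the number of ends of $C_a$ at elliptic orbits. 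This is the only place the switch from $\mu_\tau$ to $\mu'_\tau$ enters, since the identity $Q_\tau(C_a) - C_a\cdot C_a = \frac{1}{2}\big(e(C_a) - \mu^0_\tau(C_a) - 2w_\tau(C_a)\big)$ used in \eqref{eqn:PTAT} involves neither $\mu_\tau$ nor the sign of $c_\tau$.

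All the terms on the right decompose as sums over the embedded Reeb orbits $\gamma$ (in $Y_+$ or $Y_-$) at which the $C_a$ have ends, so it suffices to prove a local inequality at each such $\gamma$: writing $M$ and $M'$ for the total multiplicities of the ends of $C$ and $C'$ at $\gamma$, the local contribution should be $\ge 1$ if $\gamma$ is elliptic and $M,M'>0$ (giving the relevant summand of $E$), $\ge 1$ if $\gamma$ is negative hyperbolic and $M,M'$ are both odd (giving the relevant summand of $N$), and $\ge 0$ otherwise. I would then run the same two-case analysis as in the proof of Theorem~\ref{thm:IU}, using Lemmas~\ref{lem:linkingBound}, \ref{lem:writheBound} and \ref{lem:UWB} to bound the linking and writhe terms, but now tracking the slack. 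When $\gamma$ is positive hyperbolic one chooses $\tau$ with $\op{CZ}_\tau(\gamma^k)=0$ for all $k$, so the $\mu'_\tau$-difference and all Conley--Zehnder terms vanish and the local contribution is $-2\sum_{a\neq b}d_ad_b'\ell_\tau(\zeta_a,\zeta_b) - 2\sum_a d_ad_a'w_\tau(\zeta_a)\ge 0$ by the vanishing writhe and linking bounds of Lemmas~\ref{lem:WBE} and \ref{lem:linkingBound} (with all $\rho_i=0$); nothing extra is claimed, consistently with the fact that positive hyperbolic orbits contribute to neither $E$ nor $N$. When $\gamma$ is negative hyperbolic one chooses $\tau$ with $\op{CZ}_\tau(\gamma^k)=k$; then the $\mu'_\tau$-difference equals $MM'$, and Lemma~\ref{lem:UWB} together with Lemma~\ref{lem:linkingBound} (using the telescoping from the hyperbolic case of Lemma~\ref{lem:CLI}) yields local contribution $\ge MM'-MM'=0$. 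To extract the extra $+1$ when $M$ and $M'$ are both odd I would substitute the sharper single-end writhe bound $w_\tau(\zeta_{a,i})\le\ceil{(q_{a,i}-1)^2/2}$ from the proof of Proposition~\ref{prop:JBound} (Case 3) together with the strict inequality $\max(q\floor{q'/2},q'\floor{q/2})<qq'/2$ valid whenever $q$ and $q'$ are both odd, and then account for the parities of the individual ends to show that the accumulated slack is at least $\frac{1}{2}$.

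The elliptic case is where a genuinely new combinatorial fact is needed. With $\tau$ arbitrary and $\op{CZ}_\tau(\gamma^k)=2\floor{k\theta}+1$, the $\mu'_\tau$-difference equals $2\big(\sum_{k=1}^{M+M'-1}-\sum_{k=1}^{M-1}-\sum_{k=1}^{M'-1}\big)\floor{k\theta}+1$, and after bounding the linking terms by Lemma~\ref{lem:linkingBound} and the writhe terms by Lemma~\ref{lem:writheBound} the desired local inequality reduces to
\[
\sum_{\alpha}\sum_{\beta}\max\big(r_\alpha\floor{r'_\beta\theta},\; r'_\beta\floor{r_\alpha\theta}\big)\;\le\;\Big(\sum_{k=1}^{M+M'-1}-\sum_{k=1}^{M-1}-\sum_{k=1}^{M'-1}\Big)\floor{k\theta},
\]
where $(r_\alpha)$ is the list of multiplicities of the ends of $C$ at $\gamma$ (with $\sum_\alpha r_\alpha=M$) and $(r'_\beta)$ that of $C'$ (with $\sum_\beta r'_\beta=M'$). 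Since the right side equals the right side of the elliptic case of Lemma~\ref{lem:CLI} minus $\floor{(M+M')\theta}-\floor{M\theta}-\floor{M'\theta}\ge 0$, this is strictly stronger than Lemma~\ref{lem:CLI}, and I would prove it by refining the induction in the proof of that lemma (equivalently, by a Pick-type argument as in the proof of Proposition~\ref{prop:JBound} (Case 1), where one replaces the lattice-point count by one that omits the topmost vertical line).

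The hard part, I expect, is the negative hyperbolic case. Unlike for the ECH index $I$, where the analogous local estimate is an equality after substituting the optimal bounds, here the elementary estimate gives only $\ge 0$, so producing exactly the integer $N$ forces one to combine the improved asymptotic writhe bound of \cite{pfh2} and \cite{siefring1} with Lemma~\ref{lem:linkingBound} and a careful parity bookkeeping over the individual ends; moreover one must check that this bookkeeping behaves correctly when $C$ and $C'$ share an irreducible component, so that the ``diagonal'' terms $d_a d_a'$ do not erode the gain. By comparison, the strengthened elliptic combinatorial lemma, though new, should be routine given the technology already developed in \S\ref{sec:IICob}.
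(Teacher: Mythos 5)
Your reduction to the $J_0$-version of \eqref{eqn:PTAT} and the localization at each Reeb orbit are exactly what the paper does, and your strengthened elliptic inequality is precisely the elliptic case of the paper's Lemma~\ref{lem:CLI'} (write $\op{CZ}_\tau(\gamma^k)=2\floor{k\theta}+1$ and cancel the extra $+1$). The paper does not prove it by a new Pick-type argument: it compares right-hand sides, noting that the right side of \eqref{eqn:CLI'} minus that of \eqref{eqn:clii} equals $1-2\left(\floor{(m+m')\theta}-\floor{m\theta}-\floor{m'\theta}\right)\in\{-1,1\}$, and that in the $-1$ case the inductive step \eqref{eqn:inductiveStep} is strict by at least $2$; so your ``refine the induction'' sketch is the right idea, but this is the one new combinatorial lemma your proposal leaves unproved. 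The genuine divergence, and the gap, is the negative hyperbolic case. No improved writhe bound $w_\tau(\zeta_i)\le\ceil{(q_i-1)^2/2}$ is needed: by \eqref{eqn:CZHyp} the right-hand sides of Lemmas~\ref{lem:CLI} and \ref{lem:CLI'} coincide for hyperbolic orbits, the hyperbolic computation in the proof of \eqref{eqn:huge} is an exact equality, and the $+1$ for $N$ comes solely from the strictness clause of Lemma~\ref{lem:CLI'}: if some cross pair $a\neq b$ with $d_ad_b'\ge 1$ has $m_a$, $m_b$ both odd, then a pair of ends has $q_i,q_j'$ both odd, \eqref{eqn:obviousInequality} is strict, and $2\ell_\tau(\zeta_a,\zeta_b)$ is at least one less than what the equality computation allots to it. This is exactly the elementary fact you quote, $\max\left(q\floor{q'/2},q'\floor{q/2}\right)<qq'/2$ for $q,q'$ odd, but you bury it inside a writhe-plus-parity bookkeeping scheme that you do not carry out, instead of feeding it through the cross terms, where it already finishes the case whenever the odd totals of $C$ and $C'$ at $\gamma$ can be assigned to distinct components.

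Your worry about the diagonal terms $d_ad_a'$ is well founded, but it cannot be repaired by sharper bookkeeping: when the odd multiplicities at a negative hyperbolic $\gamma$ sit only on a component shared by $C$ and $C'$, the local slack is genuinely zero. A multiplicity-one end of a shared component contributes $\op{CZ}_\tau(\gamma)-\op{CZ}_\tau(\gamma)-2w_\tau(\zeta)=0$ (a one-strand braid has writhe zero), and in the extreme case $C=C'=\R\times\gamma$ one computes $J_0(C\cup C')=J_0(C)=J_0(C')=0$ and $C\cdot C'=\frac{1}{2}(0-2+0+2+0)=0$ while the footnote's convention gives $N=2$, so the asserted bonus is simply absent there; the paper's one-line recipe (replace Lemma~\ref{lem:CLI} by Lemma~\ref{lem:CLI'}) likewise only produces $N$ through cross terms, and in the application (the proof of Theorem~\ref{thm:J+}) curves sharing a trivial cylinder are excluded. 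So rather than promising a parity argument that would have to prove something false in the shared-component case, your write-up should obtain $N$ from the cross-term strictness as above and state the shared-component caveat explicitly.
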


\begin{proof}
  One copies the proof of Theorem~\ref{thm:IU} with minor
  modifications.  In particular, the same calculation as before shows
  that equation \eqref{eqn:PTAT} holds with $I$ replaced by $J_0$ and with
  $\mu_\tau$ replaced by $\mu_\tau'$.  To prove that the right hand
  side of this modified equation \eqref{eqn:PTAT} is at least $E+N$,
  one follows the proof of \eqref{eqn:huge}, but replacing
  Lemma~\ref{lem:CLI} with Lemma~\ref{lem:CLI'} below.
\end{proof}

\begin{lemma}
\label{lem:CLI'}
Under the assumptions of Lemma~\ref{lem:CLI}, we have
\begin{equation}
\label{eqn:CLI'}
2\sum_{i=1}^n\sum_{j=1}^{n'}\max(q_i\rho'_j,\rho_iq'_j) \le
\left(\sum_{k=1}^{m+m'-1} - \sum_{k=1}^{m-1} -
  \sum_{k=1}^{m'-1}\right)\op{CZ}_\tau(\gamma^k).
\end{equation}
If $\gamma$ is elliptic and $m,m'>0$, or if $\gamma$ is negative hyperbolic and
both $m$ and $m'$ are odd, then the inequality is strict.
\end{lemma}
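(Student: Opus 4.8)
The plan is to follow the proof of Lemma~\ref{lem:CLI} essentially verbatim, exploiting the fact that the left-hand side of \eqref{eqn:CLI'} is \emph{identical} to that of \eqref{eqn:clii} (the two displayed maxima agree since $\rho_iq_j'=q_j'\rho_i$); only the right-hand side has changed, each of its three summation ranges being shortened by one. I would split into the same two cases. When $\gamma$ is hyperbolic, write $\op{CZ}_\tau(\gamma^k)=lk$ as in Case~1 of the proof of Lemma~\ref{lem:CLI}. A one-line computation with $\sum_{k=1}^{N}k=N(N+1)/2$ shows that the shortened right-hand side still collapses to $lmm'$, so the inequality to prove is again
\[
2\sum_{i=1}^n\sum_{j=1}^{n'}\max\!\left(q_i\floor{\tfrac{lq_j'}{2}},\, q_j'\floor{\tfrac{lq_i}{2}}\right)\le lmm',
\]
which follows termwise from $\max(q_i\floor{lq_j'/2},q_j'\floor{lq_i/2})\le lq_iq_j'/2$ exactly as before. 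For the strictness claim, recall that $l$ is odd when $\gamma$ is negative hyperbolic; then $\floor{lq_j'/2}<lq_j'/2$ precisely when $q_j'$ is odd, so the termwise estimate is strict exactly when both $q_i$ and $q_j'$ are odd, and since $m,m'$ are both odd there is at least one such pair $(i,j)$, forcing a strict total inequality.

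When $\gamma$ is elliptic with monodromy angle $\theta$, substitute $\op{CZ}_\tau(\gamma^k)=2\floor{k\theta}+1$ and $\rho_i=\floor{q_i\theta}$, $\rho_j'=\floor{q_j'\theta}$. Since $\op{CZ}_\tau(\gamma^k)\equiv 1\pmod 2$ for all $k$, the right-hand side of \eqref{eqn:CLI'} is congruent mod $2$ to $(m+m'-1)+(m-1)+(m'-1)=2m+2m'-3$, which is odd when $m,m'>0$; hence that right-hand side is odd while the left-hand side is manifestly even. This immediately gives the strictness claim in the elliptic case and reduces \eqref{eqn:CLI'} itself to
\[
\sum_{i=1}^n\sum_{j=1}^{n'}\max(q_i\floor{q_j'\theta},\, q_j'\floor{q_i\theta})\le\Big(\sum_{k=1}^{m+m'-1}-\sum_{k=1}^{m-1}-\sum_{k=1}^{m'-1}\Big)\floor{k\theta}.
\]
I would prove this by the same induction on $n+n'$ used for \eqref{eqn:cliii}. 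By the symmetry of the inequality in the two lists, assume $q_n^{-1}\floor{q_n\theta}$ is minimal among $\{q_i^{-1}\floor{q_i\theta}\}\cup\{{q_j'}^{-1}\floor{q_j'\theta}\}$; then
\[
\sum_{j=1}^{n'}\max(q_n\floor{q_j'\theta},\, q_j'\floor{q_n\theta})=q_n\sum_{j=1}^{n'}\floor{q_j'\theta}\le q_n\floor{m'\theta}\le\sum_{k=m-q_n}^{m-1}\big(\floor{(k+m')\theta}-\floor{k\theta}\big),
\]
using superadditivity of the floor and that $\{m-q_n,\dots,m-1\}$ has $q_n$ elements. This is exactly the telescoping identity needed to reduce to the same inequality with $q_n$ deleted and $m$ replaced by $m-q_n$; the base case $n=0$ reads $0\le 0$. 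The only change from the proof of \eqref{eqn:cliii} is that the index window $\{m-q_n+1,\dots,m\}$ there becomes $\{m-q_n,\dots,m-1\}$ here, which has the same cardinality, so every estimate transcribes unchanged.

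I do not expect a genuine obstacle; the argument is a careful transcription of Lemma~\ref{lem:CLI}. The one place that needs attention is the range bookkeeping in the elliptic induction — in particular verifying the identity $\sum_{k=m-q_n}^{m-1}\big(\floor{(k+m')\theta}-\floor{k\theta}\big)+\sum_{k=m-q_n}^{m-q_n+m'-1}\floor{k\theta}=\sum_{k=m}^{m+m'-1}\floor{k\theta}$ in both regimes $m'\ge q_n$ and $m'<q_n$ — together with the short arithmetic confirming that the shortened right-hand side in the hyperbolic case still equals $lmm'$.
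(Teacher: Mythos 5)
Your proposal is correct. The hyperbolic case is exactly the paper's argument: the shortened right-hand side still equals $lmm'$ by \eqref{eqn:CZHyp}, and strictness in the negative hyperbolic case with $m,m'$ odd comes from a pair $(i,j)$ with $q_i,q_j'$ odd making the termwise bound $\max(q_i\floor{lq_j'/2},q_j'\floor{lq_i/2})\le lq_iq_j'/2$ strict. In the elliptic case, however, you take a somewhat different route: you reprove the inequality from scratch by re-running the induction of Lemma~\ref{lem:CLI} with all summation windows shifted by one (the step $q_n\floor{m'\theta}\le\sum_{k=m-q_n}^{m-1}(\floor{(k+m')\theta}-\floor{k\theta})$ replacing \eqref{eqn:inductiveStep}), and you get strictness for free from the parity observation that the right-hand side of \eqref{eqn:CLI'} is odd while the left-hand side is even. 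The paper instead piggybacks on the already-established \eqref{eqn:clii}: by \eqref{eqn:CZEll} the right-hand sides of \eqref{eqn:CLI'} and \eqref{eqn:clii} differ by $1-2(\floor{(m+m')\theta}-\floor{m\theta}-\floor{m'\theta})\in\{-1,1\}$; if the difference is $+1$ one is done immediately, and if it is $-1$ the inequality \eqref{eqn:inductiveStep} in the proof of Lemma~\ref{lem:CLI} was strict (its $k=m$ term exceeds $\floor{m'\theta}$), so \eqref{eqn:cliii} held with slack at least $1$ and hence \eqref{eqn:clii} with slack at least $2$. The paper's version is shorter and reuses the earlier lemma; yours is self-contained, and the parity trick is a clean way to get strictness, at the cost of the range bookkeeping you flag — which does check out in both regimes $m'\ge q_n$ and $m'<q_n$, with the base case $m=0$ giving $0\le 0$.
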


\begin{proof}
  We slightly modify the proof of Lemma~\ref{lem:CLI} as follows.  If
  $\gamma$ is hyperbolic, then the inequality \eqref{eqn:CLI'} is
  equivalent to \eqref{eqn:clii} because the right hand sides are
  equal by \eqref{eqn:CZHyp}.  If $\gamma$ is negative hyperbolic and
  both $m$ and $m'$ are odd, then there is a pair $(i,j)$ such that
  $q_i$ and $q_j'$ are both odd so that the inequality
  \eqref{eqn:obviousInequality} is strict, so \eqref{eqn:clii} is
  strict.

  Now suppose $\gamma$ is elliptic with monodromy angle $\theta$.
  Without loss of generality $m,m'>0$.  By equation \eqref{eqn:CZEll},
  the right hand side of \eqref{eqn:CLI'} minus the right hand side of
  \eqref{eqn:clii} equals
\[
1 - 2\left(\floor{(m+m')\theta} - \floor{m\theta} -
  \floor{m'\theta}\right) \in \{-1,1\}.
\]
If this is $1$ then we are done.  If this is $-1$, then equality does
not hold in \eqref{eqn:inductiveStep}, so the two sides of
\eqref{eqn:clii} differ by at least $2$ and we are also done.
\end{proof}

\subsection{The relative filtration $J_+$}

\begin{proof}[Proof of Theorem~\ref{thm:J+}.]
  Since $J_+(C)$ depends only on the relative homology class of $C$,
  we may assume that $C$ is a union of $k$ (not necessarily distinct)
  simple, irreducible holomorphic curves.  We now prove the theorem by
  induction on $k$.

If $C$ is simple and irreducible, then $J_+(C)\ge 0$ by
Corollary~\ref{cor:J+}, since the assumption that $Y$ is a
contact manifold guarantees that $C$ has at least one positive end.
Also, if $C$ is any multiple cover of a cylinder $\R\times\gamma$
where $\gamma$ is a Reeb orbit, then $J_+(C)=0$ by definition.

To complete the induction, it is enough to show that if
$C\in\mc{M}(\alpha,\beta)$ and $C'\in\mc{M}(\alpha',\beta')$ satisfy
$J_+(C), J_+(C')\ge 0$, and if the images of $C$ and $C'$ do not have
a cylinder $\R\times\gamma$ in common, then $J_+(C\cup C')\ge 0$.
Note that $C\cdot C'\ge 0$ by intersection positivity and
Proposition~\ref{prop:CdotC}.  So by Proposition~\ref{prop:JU},
\[
J_+(C\cup C') \ge E + N + (|\alpha\alpha'|-|\alpha|-|\alpha'|) -
(|\beta\beta'|-|\beta|-|\beta'|).
\]
Now write
\[
E = E_+ + E_-, \quad\quad N = N_+ + N_-,
\]
where $E_+$ denotes the number of elliptic orbits that appear in both
$\alpha$ and $\alpha'$; $E_-$ denotes the number of elliptic orbits
that appear in both $\beta$ and $\beta'$; $N_+$ denotes the number of
negative hyperbolic orbits that appear with odd multiplicity in both
$\alpha$ and $\alpha'$; and $N_-$ denotes the number of negative
hyperbolic orbits that appear with odd multiplicity in both $\beta$
and $\beta'$.  It follows from Definition~\ref{def:JVariant} that
\[
\begin{split}
|\alpha\alpha'| &= |\alpha| + |\alpha'| - E_+ - N_+,\\
|\beta\beta'| &= |\beta| + |\beta'| - E_- - N_-.
\end{split}
\]
Putting the above together gives $J_+(C\cup C') \ge 2(E_-+N_-) \ge 0$.
\end{proof}

%

\begin{acknowledgments}
  The essential importance of the work of Cliff Taubes for this
  project should be self-evident.  In addition I thank Robert
  Lipshitz, Richard Siefring, and Chris Wendl for some helpful
  conversations, and Yasha Eliashberg for his continued guidance,
  inspiration, and support throughout the development of ECH.
\end{acknowledgments}

\end{document}